\theoremstyle{definition}
\newcounter{maincoro}
\newtheorem{theorem}{Theorem}[section]
\newtheorem{lemma}[theorem]{Lemma}
\newtheorem{fact}[theorem]{Fact}
\newtheorem{proposition}[theorem]{Proposition}
\newtheorem{corollary}[theorem]{Corollary}
\theoremstyle{definition}
\newcounter{maintheorem}
\newtheorem{definition}[theorem]{Definition}
\theoremstyle{remark}
\newtheorem{remark}[theorem]{Remark}
\numberwithin{equation}{section}
\newcommand{\R}{\mathbb{R}}
\newcommand{\C}{\mathbb{C}}
\newcommand{\N}{\mathbb{N}}
\newcommand{\T}{\mathbb{T}}
\newcommand{\D}{\mathbb{D}}
\renewcommand{\tocsection}[3]{%
	\indentlabel{\@ifnotempty{#2}{\bfseries\ignorespaces#1 #2\quad}}\bfseries#3}
\renewcommand{\tocsubsection}[3]{%
	\indentlabel{\@ifnotempty{#2}{\ignorespaces#1 #2\quad}}#3}
\newcommand\@dotsep{4.5}
\def\@tocline#1#2#3#4#5#6#7{\relax
	\ifnum #1>\c@tocdepth % then omit
	\else
	\par \addpenalty\@secpenalty\addvspace{#2}%
	\begingroup \hyphenpenalty\@M
	\@ifempty{#4}{%
		\@tempdima\csname r@tocindent\number#1\endcsname\relax
	}{%
		\@tempdima#4\relax
	}%
	\parindent\z@ \leftskip#3\relax \advance\leftskip\@tempdima\relax
	\rightskip\@pnumwidth plus1em \parfillskip-\@pnumwidth
	#5\leavevmode\hskip-\@tempdima{#6}\nobreak
	\leaders\hbox{$\m@th\mkern \@dotsep mu\hbox{.}\mkern \@dotsep mu$}\hfill
	\nobreak
	\hbox to\@pnumwidth{\@tocpagenum{\ifnum#1=1\bfseries\fi#7}}\par% <-- \bfseries for \section page
	\nobreak
	\endgroup
	\fi}
\renewcommand\csname r@tocindent0\endcsname{0pt}
\def\l@subsection{\@tocline{2}{0pt}{2.5pc}{5pc}{}}
\newcommand{\nn}[1]{{\left\vert\kern-0.25ex\left\vert\kern-0.25ex\left\vert #1 
		\right\vert\kern-0.25ex\right\vert\kern-0.25ex\right\vert}}
\renewcommand{\geq}{\geqslant}
\renewcommand{\leq}{\leqslant}
\newcommand{\NA}{\operatorname{NA}}
\newcommand{\e}{\varepsilon}
\newcommand{\ep}{\varepsilon}
\thanks{}
\subjclass[2020]{}
\date{\today}
\keywords{}
\begin{document}

\title[On Holomorphic Functions Attaining their weighted norms]{On Holomorphic Functions Attaining their weighted norms}

\author[S.~Dantas]{Sheldon Dantas}
\address[S.~Dantas]{Departament de Matem\`atiques and Institut Universitari de Matem\`atiques i Aplicacions de Castell\'o (IMAC), Universitat Jaume I, Campus del Riu Sec. s/n, 12071 Castell\'o, Spain \newline
\href{https://orcid.org/0000-0001-8117-3760}{ORCID: \texttt{0000-0001-8117-3760}}}
\email{\texttt{dantas@uji.es}}

\author[R. Medina]{Rubén Medina}
\address[R. Medina]{Universidad de Granada, Facultad de Ciencias. Departamento de Análisis Matemático, 18071-Granada (Spain); and Czech Technical University in Prague, Faculty of Electrical Engineering. Department of Mathematics, Technická 2, 166 27 Praha 6 (Czech Republic) \newline
\href{https://orcid.org/0000-0002-4925-0057}{ORCID: \texttt{0000-0002-4925-0057}}}
\email{rubenmedina@ugr.es}

\thanks{}

\date{\today}
\keywords{Norm attaining theory, Bloch spaces, Weighted spaces}
\subjclass[2020]{46B03, 46B20 (primary), and 46B45, 46B26, 47J07, 46T20 (secondary)}

\begin{abstract} We study holomorphic functions attaining weighted norms and its connections with the classical theory of norm attaining holomorphic functions. We prove that there are polynomials on $\ell_p$ which attain their weighted but not their supremum norm and viceversa. Nevertheless, we also prove that in the context of polynomials of fixed degree both norms are in fact equivalent. This leads us to the main problem of the paper, namely, whether the holomorphic functions attaining their weighted norm are dense. Although we exhibit an example where this does not hold, as the main theorem of our paper, we prove the denseness provided the domain space is uniformly convex. In fact, we provide a Bollobás type theorem in this setting. For the proof of such a result we develop a new geometric technique.
\end{abstract}
\maketitle

\tableofcontents

\section{Introduction} 

Weighted and Bloch spaces of holomorphic functions have been a target of intense research in the recent years, \cite{Beltran, Beltran1, BGM, BLT, BLW, Jorda, Mario, M, M-R}. In fact, they are natural objects in partial differential equations, complex analysis, operator theory, and spectral theory. On the other hand, it is a classical problem in Analysis to determine whether the supremum of a bounded function is actually a maximum, \cite{BP, Bol, J, Lind}, and specially in the setting of holomorphic functions, \cite{AAGM, AK, CK, CM, Mingu}. In this paper, we investigate when the weighted norm of a holomorphic function is attained. By compactness, we are forced to work in the infinite-dimensional setting and, for that reason, we are mainly interested on weighted spaces of holomorphic functions $f:B_X \rightarrow Y$ where $X$ is an infinite complex Banach space and $Y$ is a complex arbitrary Banach space, \cite{Beltran, Beltran1, CS, GMR, Jorda, TX}. It is worth mentioning that norm-attaining problems have been previously studied in the setting of weighted Banach spaces, see \cite{BLW, MariaMartin, M-R} where the authors study norm-attaining composition operators acting on the classical Bloch and little Bloch spaces.

\subsection{Preliminaries and notation} \label{subsec:preliminares} Throughout the whole paper we only consider {\it complex} Banach spaces. We denote by $B_X$ the {\it open} unit ball of $X$, by $\overline{B_X}$ the closed unit ball of $X$, and by $S_X$ the unit sphere of $X$.

An {\it admissible weight} on a Banach space $X$ is a function $v: B_X \rightarrow \R^+$ such that $v(x)=\widetilde v(\|x\|)$ where $\widetilde v$ satisfy the following conditions:
\begin{itemize}
    \item $\widetilde v:[0,1]\to\R$ is continuous.
    \item $\widetilde v$ is strictly decreasing.
    \item $\widetilde v(1)=0$.
\end{itemize}

Given a Banach space $Y$, we will be working with $Y$-valued holomorphic functions on the open unit ball $B_X$ that belong to the weighted space $H_v(B_X; Y)$ for some admissible weight $v$. That is, $f\in H_v(B_X;Y)$ if it is holomorphic and satisfies $\sup_{x \in B_X} v(x) \|f(x)\| < \infty$. Such a space is endowed with the {\it $v$-norm} (also known as {\it weighted norm}) given by
\begin{equation*}
    \|f\|_v := \sup_{x \in B_X} v(x) \|f(x)\|\;\;\;\;\;(f\in H_v(B_X;Y)).
\end{equation*}

We are mainly interested in the subspace $\mathcal{A}_u(B_X;Y)$ of $H_v(B_X;Y)$ consisting of all holomorphic functions which are uniformly continuous on $B_X$. Differently from what happens in the classical theory of holomorphic functions which attain their supremum norm (where the supremum is attained on $S_X$), when dealing with the weighted norm of a function $f$ in $\mathcal{A}_u(B_X;Y)$ we have the following phenomenon: if $f$ attains its $v$-norm, then the $v$-norm of $f$ must be attained at a point of the open unit ball $B_X$ of the complex Banach space $X$ in consideration (see Remark \ref{remarkopen} below). This leads us to think that the study of holomorphic functions which attain their weighted norm is connected to the study of holomorphic functions attaining their supremum on smaller balls (see Lemma \ref{lemma:carac} and the next definition).

Given $s\in(0,1]$, we define the {\it $s$-norm} of a function $f$ by
\begin{equation*}
    \|f\|_s:=\sup\limits_{x\in B_X}\|f(sx)\| \;\;\;\;(f\in\mathcal{A}_u(B_X;Y)).
\end{equation*}
Let us notice that $\|\cdot\|_s$ is a complete norm in $\mathcal{A}_u(B_X;Y)$ for every $s\in(0,1]$ (see, for instance, \cite[page 634]{AAGM} and \cite[page 499]{CM}).
For the particular case $s=1$, the $1$-norm will be called simply by the {\it supremum norm} and will be denoted by the standard notation $\|\cdot\|_\infty$.

For the sake of clarity, we will be dealing only with the standard weight $v$ defined as $v(x) = 1 - \|x\|^2$ for every $x \in B_X$. However, as the reader can immediately realize, analogous results can be stated for a general admissible weight. Therefore, given $f \in \mathcal{A}_u(B_X;Y)$, we simply set 
\begin{equation*}
    \|f\|_v = \sup_{x \in B_X} (1 - \|x\|^2) \|f(x)\|.
\end{equation*}

\noindent
The following reformulation of the $v$-norm will be convenient for the upcoming computations and we will be using it without any explicit reference. For $f \in \mathcal{A}_u(B_X; Y)$, we have that 
\begin{equation*} \label{otra-forma}
\|f\|_v = \sup_{s \in [0,1]} (1 - s^2) \|f\|_s.
\end{equation*}

Before continuing, let us introduce some basic definitions and concepts we need in order to avoid the reader jumping into many different references so often. A mapping $P: X \rightarrow Y$ is an {\it $N$-homogeneous polynomial} if there exists a symmetric $N$-linear mapping $A \in \mathcal{L}(^N X; Y)$ such that $P(x) = A(x, \ldots, x)$ for every $x \in X$. We denote by $\mathcal{P}(^N X; Y)$ the Banach space of all $N$-homogeneous polynomials from $X$ into $Y$. It is convenient to know that $0$-homogeneous polynomials are the constant mappings from $X$ into $Y$. A mapping $P: X \rightarrow Y$ is a {\it polynomial of degree at most $N$} if there exist $P_k \in \mathcal{P}(^k X; Y)$, with $k=0,1,\ldots, N$, such that $P = \sum_{k=0}^N P_k$. When $P_N \not= 0$, we say that $P$ has degree $N$. The symbol $\mathcal{P}^N (X; Y)$ stands for all polynomials of degree at most $N$. Moreover, we denote by $\mathcal{P}(X, Y) = \bigcup_{N=0}^{\infty} \mathcal{P}^N(X; Y)$ the space of all polynomials from $X$ into $Y$. For a complete background on (homogeneous) polynomials, we send the referee to \cite{Mujica} and the more recent book \cite{HJ}, where we take most of all notation from.

We are now in the position of providing the precise definitions of norm-attaining holomorphic functions for the $v$- and $s$-norms.

\begin{definition} Let $f \in \mathcal{A}_u(B_X; Y)$ and $s\in(0,1]$. We say that $f$ {\it attains the} 
\begin{itemize} 
\item[(a)] {\it $s$-norm} when there exists $x_0 \in s\overline{B_X}$ such that $\|f\|_{s} = \|f(x_0)\|$. 
\item[(b)] {\it $v$-norm} when there exists $x_0 \in \overline{B_X}$ such that $\|f\|_v = (1 - \|x_0\|^2)\|f(x_0)\|$.
\end{itemize} 
\end{definition}

Despite being straightforward, for the sake of completeness, let us rapidly show the following fact.  

\begin{remark}\label{remarkopen} Let $f \in \mathcal{A}_u(B_X;Y)$ and suppose that $f$ attains its $v$-norm at some point $x_0 \in S_X$. Then we have that 
\begin{equation*}
    0 = (1 - \|x_0\|^2) \|f(x_0)\| = \|f\|_v \geq ( 1 -\|x\|^2) \|f(x)\| 
\end{equation*}
for every $x \in B_X$. Since $f$ is bounded on $B_X$, $f \equiv 0$. In other words, if $f$ attains its $v$-norm, then it must be attained at a point of $B_X$ (not in the boundary!). On the other hand, if $f$ attains its $s$-norm, then it is attained on $sS_X$ by the vector valued Maximum Modulus Principle (see, for instance, \cite[5.G, page 40]{Mujica}).
\end{remark}

Therefore, we have the following useful connection between the $v$- and $s$-norms.

\begin{lemma} \label{lemma:carac} Let $X, Y$ be complex Banach spaces. Let $f \in \mathcal{A}_u(B_X;Y)$. The following statements are equivalent.
\begin{itemize} 
\item[(1)] $f$ attains its $v$-norm. 
\item[(2)] There is $ s_0 \in (0, 1)$ such that $\|f\|_v = (1 - s_0^2) \|f\|_{s_0}$ and $f$ attains its ${s_0}$-norm.
\end{itemize} 
\end{lemma}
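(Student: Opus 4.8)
The plan is to transfer the problem to the radial reformulation $\|f\|_v=\sup_{s\in[0,1]}(1-s^2)\|f\|_s$ and to read the $v$-norm attaining radius $\|x_0\|$ as an index $s_0$ realizing this supremum. The only genuinely geometric input is the (vector-valued) Maximum Modulus Principle already recorded in Remark \ref{remarkopen}, which pins the relevant extremal points onto spheres $sS_X$ so that the weight factors $1-\|x_0\|^2$ and $1-s_0^2$ agree.

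For $(1)\Rightarrow(2)$ I would fix $x_0\in\overline{B_X}$ with $\|f\|_v=(1-\|x_0\|^2)\|f(x_0)\|$; discarding the trivial case $f\equiv0$, Remark \ref{remarkopen} forces $x_0\in B_X$, so that $s_0:=\|x_0\|\in[0,1)$. Writing $\|f\|_{s_0}=\sup_{y\in s_0\overline{B_X}}\|f(y)\|$ (the sup over the open ball equals the sup over its closure by continuity of $f$), I would then run
\[
\|f\|_v=(1-s_0^2)\|f(x_0)\|\le(1-s_0^2)\|f\|_{s_0}\le\sup_{s\in[0,1]}(1-s^2)\|f\|_s=\|f\|_v,
\]
using $x_0\in s_0\overline{B_X}$ for the first inequality and $s=s_0$ as a competitor for the second. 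Since $1-s_0^2>0$, every inequality is an equality; this gives at once $\|f\|_v=(1-s_0^2)\|f\|_{s_0}$ and $\|f(x_0)\|=\|f\|_{s_0}$, the latter saying that $f$ attains its $s_0$-norm at $x_0$.

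For the converse $(2)\Rightarrow(1)$ I would take $s_0\in(0,1)$ as given. Since $f$ attains its $s_0$-norm, the Maximum Modulus Principle invoked in Remark \ref{remarkopen} lets me choose the attaining point $x_0$ on $s_0S_X$, so $\|x_0\|=s_0$ and $\|f(x_0)\|=\|f\|_{s_0}$. Then
\[
(1-\|x_0\|^2)\|f(x_0)\|=(1-s_0^2)\|f\|_{s_0}=\|f\|_v,
\]
and $x_0\in\overline{B_X}$, which is exactly the assertion that $f$ attains its $v$-norm.

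I expect the routine ingredients to be the monotonicity of $s\mapsto\|f\|_s$ and the passage between the open and closed balls. The point demanding care is keeping the weight factors synchronized: in $(1)\Rightarrow(2)$ one must recognize $s_0=\|x_0\|$ as the radius realizing the supremum, and in $(2)\Rightarrow(1)$ one must push the $s_0$-attaining point onto the sphere $s_0S_X$ so that its norm equals $s_0$ exactly. The sole degenerate situation is $s_0=0$ (the $v$-norm attained at the origin), which lies on the boundary of the admissible range for $s$ and should be noted separately; it does not affect the mechanism of the equivalence.
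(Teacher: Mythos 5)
Your argument is essentially the one the paper leaves implicit: Lemma \ref{lemma:carac} is stated without a written proof, as a direct consequence of Remark \ref{remarkopen} and the reformulation $\|f\|_v=\sup_{s\in[0,1]}(1-s^2)\|f\|_s$, and your squeeze
\[
\|f\|_v=(1-s_0^2)\|f(x_0)\|\le(1-s_0^2)\|f\|_{s_0}\le\sup_{s\in[0,1]}(1-s^2)\|f\|_s=\|f\|_v
\]
together with the Maximum Modulus Principle in the converse direction is exactly the intended mechanism. The one point where I must push back is your closing claim that the degenerate case $s_0=\|x_0\|=0$ ``does not affect the mechanism of the equivalence.'' It does: if the supremum of $s\mapsto(1-s^2)\|f\|_s$ is reached only at $s=0$, then $(1)\Rightarrow(2)$ fails as literally stated, because $(2)$ demands $s_0\in(0,1)$. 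A nonzero constant function $f\equiv c$ is already a counterexample: it attains its $v$-norm at $x_0=0$ with $\|f\|_v=\|c\|$, while $(1-s^2)\|f\|_s=(1-s^2)\|c\|<\|c\|$ for every $s\in(0,1)$, so no admissible $s_0$ exists. This is therefore not a boundary technicality to be waved away; it is the unique configuration in which the stated equivalence breaks, and it points to an imprecision in the formulation itself (one should either allow $s_0\in[0,1)$ with the convention $\|f\|_0=\|f(0)\|$, or exclude functions whose weighted maximum sits at the origin --- note that the paper later applies the lemma to produce $s_n\in(0,1]$, so the gap is inherited there too). Apart from this, your proof is correct: the identification $s_0=\|x_0\|$, the observation that positivity of $1-s_0^2$ forces both inequalities to be equalities, and the placement of the $s_0$-attaining point on $s_0S_X$ via Remark \ref{remarkopen} are all sound and complete.
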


Given $s\in(0,1]$ and $N\in\N$, we denote by $\NA_s \mathcal{P}^{N}(X; Y)$ the set of all polynomials from $X$ into $Y$ of degree $N$ that attain the $s$-norm. In the case $s=1$, we denote $\NA_1\mathcal{P}^{N}(X;Y)$ simply by $\NA\mathcal{P}^{N}(X;Y)$. Analogously, we define the set $\NA_v \mathcal{P}^{N}(X; Y)$ of all polynomials from $X$ into $Y$ that attain their $v$-norm. One might define the corresponding $\NA$'s for the spaces $\mathcal{P}(^NX;Y)$ and $\mathcal{A}_u(B_X; Y)$ in an obvious way.

\subsection{Our results} Let us briefly summarize our results. In Section \ref{Sec2}, we show that, if we assume that the domain space $X$ is reflexive, then {\it every} bounded weakly sequentially continuous function from $X$ into $\C$ attains its $v$-norm (see Proposition \ref{Prop:reflexive-wsc}). We then proceed to prove that in the setting of $N$-homogeneous polynomials, the $v$- and $s$-norms differ up to a constant which depends on the degree $N$ of the homogeneous polynomial. In particular, we have that homogeneous polynomials attain their $v$-norm if and only if they attain their $s$-norm for some $s \in (0, 1]$ if and only if they attain their $s$-norm for every $s\in(0,1]$ (see Proposition \ref{Prop:Poly-Hom}). This tells us that the most interesting case is the non-homogeneous case and so we proceed to study it.

First, we give a sharper way to compute the $v$-norm for polynomials of a fixed degree. Using this, we are able to prove that the $v$- and $s$-norms are equivalent in $\mathcal{P}^N(X;Y)$ for every pair of Banach spaces $X$ and $Y$. We then finish Section \ref{Sec2} giving a construction of non-homogeneous polynomials which attain their $v$-norm but do not attain their supremum norm and vice-versa (see Propositions \ref{Example:non-homogeneous-poly-Pr} and
\ref{Example:non-homogeneous-Q}, respectively).

We then finally start the study of the denseness of the set $\NA_v \mathcal{P}^N(X; Y)$, and consequently of the set $\NA_v \mathcal{A}_u(B_X;Y)$, in Section \ref{Sec3}. By using a result due to Daniel Carando and Martin Mazzitelli \cite{CM}, we prove that there exists a Banach space $X$ such that the set $\NA_v\mathcal{A}_u(B_X; Y)$ is not $\|\cdot\|_{\infty}$-dense in $\mathcal{A}_u(B_X; Y)$ whenever $Y$ is strictly convex (see Theorem \ref{counterexample-denseness}). We then move to the main theorem of the paper. As one can immediately realize, in order to get a positive result about the denseness of the set $\NA_v$, it seems not to be enough emulating some of the standard techniques from norm-attaining theory (see, for instance, the proof of \cite[Theorem 1]{Lind}). For this reason, we come up with a new geometric approach (see the first two paragraphs of Section \ref{Sec3}) to prove that if $X$ is uniformly convex, then the Banach space $\mathcal{P}^N(X; Y)$ satisfies a Bollobás-type theorem for every Banach space $Y$ (see Theorem \ref{mainthdensity}) and, as an immediate consequence of it, the set $\NA_v\mathcal{A}_u(B_X; Y)$ is $\|\cdot\|_{\infty}$ dense in $\mathcal{A}_u(B_X; Y)$ (see Corollary \ref{corollary:mainthdensity}).

\section{Relations between the weighted and supremum norms} \label{Sec2}

\subsection{Some basic results}

First, we must state and proof a basic but general fact about the norm attainment of the weighted norm for reflexive spaces (and, in particular, for finite-dimensional spaces). In what follows, we say that a sequence $(x_n)_n \subseteq B_X$ is a {\it maximizing sequence} for $f: X \rightarrow Y$ for the norm $\|\cdot\|_s$ if $ \|f(x_n)\| \rightarrow \|f\|_s$.

\begin{proposition} \label{Prop:reflexive-wsc} Let $X$ be a reflexive space. Every weakly sequentially continuous function $f: X \rightarrow \C$ bounded on $B_X$ attains the $s$-norm for every $s\in(0,1]$. In particular, such an $f$ attains the $v$-norm.
\end{proposition}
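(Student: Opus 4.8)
The plan is to run a weak-compactness argument on a maximizing sequence, exploiting that the weight $x \mapsto 1-\|x\|^2$ is \emph{weakly upper semicontinuous} precisely because it is a decreasing function of the norm.

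First I would settle the $s$-norm. Fix $s \in (0,1]$ and choose a maximizing sequence $(x_n)_n \subseteq B_X$, so that $\|f(sx_n)\| \to \|f\|_s$. Since $X$ is reflexive, $\overline{B_X}$ is weakly compact (Kakutani), and by the Eberlein--\v{S}mulian theorem it is weakly sequentially compact; hence some subsequence $(x_{n_k})_k$ converges weakly to a point $x_0 \in \overline{B_X}$, and therefore $(sx_{n_k})_k$ converges weakly to $sx_0 \in s\overline{B_X}$. Because $f$ is weakly sequentially continuous, $f(sx_{n_k}) \to f(sx_0)$ in $\C$, whence $\|f(sx_0)\| = \lim_k \|f(sx_{n_k})\| = \|f\|_s$. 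As $sx_0 \in s\overline{B_X}$, this shows that $f$ attains its $s$-norm for every $s\in(0,1]$.

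For the $v$-norm I would argue along the same lines, but now account for the weight. Take $(x_n)_n \subseteq B_X$ with $(1-\|x_n\|^2)\|f(x_n)\| \to \|f\|_v$ and, as above, pass to a subsequence $(x_{n_k})_k$ converging weakly to some $x_0 \in \overline{B_X}$; weak sequential continuity gives $\|f(x_{n_k})\| \to \|f(x_0)\|$. The key point is that the norm is weakly lower semicontinuous, so $\|x_0\| \le \liminf_k \|x_{n_k}\|$, and since $t \mapsto 1-t^2$ is decreasing on $[0,1]$ this yields $1-\|x_0\|^2 \ge \limsup_k (1-\|x_{n_k}\|^2)$. Multiplying by $\|f(x_0)\| \ge 0$ and using $\|f(x_{n_k})\| \to \|f(x_0)\|$,
\begin{equation*}
(1-\|x_0\|^2)\|f(x_0)\| \;\ge\; \|f(x_0)\| \limsup_k (1-\|x_{n_k}\|^2) \;=\; \limsup_k (1-\|x_{n_k}\|^2)\|f(x_{n_k})\| \;=\; \|f\|_v .
\end{equation*}
The reverse inequality follows from the definition of $\|\cdot\|_v$ once we confirm that $x_0 \in B_X$: if $f \not\equiv 0$ then $\|f\|_v > 0$, so the displayed inequality forces $1-\|x_0\|^2 > 0$, i.e. $x_0 \in B_X$ (the case $f \equiv 0$ being trivial). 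Thus $(1-\|x_0\|^2)\|f(x_0)\| = \|f\|_v$ with $x_0 \in \overline{B_X}$, so $f$ attains its $v$-norm.

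The only genuinely delicate step is the treatment of the weight: along a merely weakly convergent subsequence, $\|x_{n_k}\|$ need not converge to $\|x_0\|$. What rescues the argument is that this can only fail with $\|x_0\|$ \emph{smaller}, and the weight being decreasing in the norm, the limiting weight is then at least as large as the limit of the weights, pushing the value up rather than down. I would be careful to justify the $\limsup$ manipulation in the display, noting that $\|f(x_{n_k})\|$ converges (so the $\limsup$ of the product splits as the limit times the $\limsup$, the first factor being nonnegative) and that every subsequence of a maximizing sequence still tends to $\|f\|_v$. Alternatively, the $v$-norm conclusion can be recovered from the $s$-norm result via the reformulation $\|f\|_v = \sup_{s\in[0,1]}(1-s^2)\|f\|_s$ together with Lemma \ref{lemma:carac}, but extracting the maximizer directly avoids the boundary cases $s_0 \in \{0,1\}$.
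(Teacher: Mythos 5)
Your proof of the $s$-norm attainment is essentially the paper's argument: extract a weakly convergent subsequence of a maximizing sequence via reflexivity and conclude by weak sequential continuity. Where you genuinely diverge is the passage to the $v$-norm. The paper simply invokes Lemma \ref{lemma:carac}, i.e.\ it reduces the $v$-norm to the $s$-norm; this implicitly requires knowing that the supremum $\sup_{s\in[0,1]}(1-s^2)\|f\|_s$ is actually attained at some $s_0\in(0,1)$, a point the paper leaves unaddressed (and note that Lemma \ref{lemma:carac} is stated for $f\in\mathcal{A}_u(B_X;Y)$, whereas the proposition concerns general bounded weakly sequentially continuous functions). Your direct argument --- running the weak-compactness extraction on a $v$-maximizing sequence and using the weak lower semicontinuity of the norm, so that the decreasing weight $t\mapsto 1-t^2$ can only increase in the limit --- is self-contained and avoids both of these issues; the $\limsup$ manipulation is correctly justified since $\|f(x_{n_k})\|$ converges to a nonnegative limit and the weights are bounded, and your observation that $\|f\|_v>0$ forces $x_0\in B_X$ handles the boundary case. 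In short: same skeleton for the $s$-norm, but a cleaner and more complete treatment of the $v$-norm step than the paper's own.
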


\begin{proof} Let $f: X \rightarrow \C$ be a non-zero weakly sequentially continuous function and $s\in(0,1]$ arbitrary. Then, there is $(x_n)_n \subseteq sS_X$ a maximizing sequence for the norm $\|f\|_s$. By the Smulyan lemma, since $X$ is reflexive, there exists a subsequence of $(x_n)_n$, denoted by $(x_n)_n$, and $x_0 \in s\overline{B_X}$ such that $x_n \stackrel{w}{\longrightarrow} x_0$. Then, since $f$ is weakly sequentially continuous, we have that $|f(x_n)| \rightarrow |f(x_0)|$ and so $|f(x_0)|=\|f\|_s$. Finally, if $f$ attains the $s$-norm for every $s\in(0,1]$ then by Lemma \ref{lemma:carac} $f$ attains the $v$-norm.
\end{proof}

In what follows $P^N_{wsc}(X;\C)$ stands for the subspace of $P^N(X;\C)$ consisting of weakly sequentially continuous polynomials. For homogeneous polynomials $P_{wsc}(^NX;\C)$ is defined analogously.

As an immediate consequence of Proposition \ref{Prop:reflexive-wsc} and \cite[Theorem 4.1]{Mingu}, we have the following result.

\begin{corollary} \label{Corollary:wsc} Let $X$ be a reflexive space. Suppose that $\mathcal{P}^N(X; Y) = \mathcal{P}_{wsc}^N (X; Y)$. Then, 
\begin{equation*} 
\NA_v \mathcal{P}^N(X; Y) = \mathcal{P}^N (X; Y) = \NA \mathcal{P}^N (X; Y).
\end{equation*} 
\end{corollary}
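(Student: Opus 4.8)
The plan is to prove the two equalities separately; in both cases the inclusion into $\mathcal{P}^N(X;Y)$ is trivial, so only the reverse inclusions require argument. The common engine is that the hypothesis $\mathcal{P}^N(X;Y)=\mathcal{P}_{wsc}^N(X;Y)$ makes \emph{every} $P\in\mathcal{P}^N(X;Y)$ weakly sequentially continuous, while reflexivity of $X$ turns maximizing sequences in $\overline{B_X}$ into weakly convergent ones; weak sequential continuity then transports this convergence to the values of $P$.

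For the supremum norm I would invoke \cite[Theorem 4.1]{Mingu} verbatim: under the standing hypotheses ($X$ reflexive and every polynomial of degree at most $N$ weakly sequentially continuous) it yields precisely $\NA\mathcal{P}^N(X;Y)=\mathcal{P}^N(X;Y)$, so nothing further is needed there.

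For the $v$-norm I would run the proof of Proposition \ref{Prop:reflexive-wsc} again, now allowing the general target $Y$. Fix $P\in\mathcal{P}^N(X;Y)$ and $s\in(0,1]$, and take a maximizing sequence $(x_n)_n\subseteq sS_X$ for $\|\cdot\|_s$ (Remark \ref{remarkopen} permits placing the $x_n$ on the sphere $sS_X$). By reflexivity and the Smulyan lemma, after passing to a subsequence $x_n\stackrel{w}{\longrightarrow}x_0$ with $x_0\in s\overline{B_X}$. The only place scalar-valuedness entered the original proof was in passing from $f(x_n)\to f(x_0)$ to $|f(x_n)|\to|f(x_0)|$; here weak sequential continuity of $P$ gives $P(x_n)\to P(x_0)$ in the norm of $Y$, hence $\|P(x_n)\|\to\|P(x_0)\|$, whence $\|P(x_0)\|=\|P\|_s$ and $P$ attains its $s$-norm. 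As $s\in(0,1]$ was arbitrary, Lemma \ref{lemma:carac} upgrades this, exactly as in Proposition \ref{Prop:reflexive-wsc}, to attainment of the $v$-norm, giving $\NA_v\mathcal{P}^N(X;Y)=\mathcal{P}^N(X;Y)$.

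The one point that deserves care, and which I regard as the main (if modest) obstacle, is the precise meaning of weak sequential continuity in the vector-valued case: the argument needs $P(x_n)\to P(x_0)$ in the \emph{norm} of $Y$, since mere weak convergence would only give $\|P(x_0)\|\leq\liminf_n\|P(x_n)\|$, the inequality opposite to the one required. I would therefore make explicit that membership in $\mathcal{P}_{wsc}^N(X;Y)$ means sequential continuity from the weak topology of $X$ into the norm topology of $Y$, so that $\|P(x_n)\|\to\|P(x_0)\|$ is justified; with this understood, the rest is the same bookkeeping as in Proposition \ref{Prop:reflexive-wsc}.
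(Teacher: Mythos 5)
Your proof is correct and follows essentially the same route as the paper, which simply declares the corollary an immediate consequence of Proposition \ref{Prop:reflexive-wsc} and \cite[Theorem 4.1]{Mingu}. Your extra care in extending Proposition \ref{Prop:reflexive-wsc} to vector-valued targets (insisting that weak sequential continuity means weak-to-norm convergence of the values, so that $\|P(x_n)\|\to\|P(x_0)\|$) addresses a point the paper leaves implicit, since both Proposition \ref{Prop:reflexive-wsc} and the definition of $\mathcal{P}^N_{wsc}$ are stated there only for scalar-valued maps.
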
 

\begin{remark} Corollary \ref{Corollary:wsc} does not hold true if one of the hypothesis is removed. 
\begin{itemize} 
\item[(a)] Let us consider $X=c_0$. In this case, we have $\mathcal{P}(^N c_0; \C) = \mathcal{P}_{wsc} (^N c_0; \C)$ and hence $\mathcal{P}^N ( c_0; \C) = \mathcal{P}_{wsc}^N ( c_0; \C)$. Nevertheless, there exist homogeneous polynomials which do not attain their norm (by the James theorem). It will then follow from Proposition \ref{Prop:Poly-Hom} below that $\NA_v \mathcal{P}(^N c_0; \C) \not= \mathcal{P}(^N c_0; \C) \not= \NA \mathcal{P}(^N c_0; \C)$. \vspace{0.2cm} 
\item[(b)] Let us consider $X = \ell_p$ and $N>p$. In this case, we have $\mathcal{P}^N ( \ell_p; \C) \not= \mathcal{P}_{wsc}^N ( \ell_p; \C)$ (see, for instance, \cite{A}). In the next subsection (Propositions \ref{Example:non-homogeneous-poly-Pr} and \ref{Example:non-homogeneous-Q}) we will see that $\NA_v \mathcal{P}^N( \ell_p; \C)$,  $\mathcal{P}^N( \ell_p; \C)$ and $\NA \mathcal{P}^N( \ell_p; \C)$ are all different to each other.
\end{itemize} 
\end{remark}

We now give a complete study of the relation between the $v$- and $s$-norms in the setting of $N$-homogeneous polynomials. We have the following result.

\begin{proposition} \label{Prop:Poly-Hom} Let $X, Y$ be Banach spaces. For every $N \in \N$, there exists $\delta_N \in (0,1)$ with $\delta_N\to0$ such that, for every $s\in(0,1]$ and every $P \in \mathcal{P}(^N X; Y)$,
\begin{equation*}
\|P\|_v = \frac{\delta_N}{s^N} \|P\|_{s}.
\end{equation*}
Moveover, the following statements are equivalent. 
\begin{itemize} 
\item[(a)] $P$ attains the $v$-norm. 
\item[(b)] $P$ attains the $s$-norm for some $s\in(0,1]$. 
\item[(c)] $P$ attains the $s$-norm for every $s\in(0,1]$.
\end{itemize} 
\end{proposition}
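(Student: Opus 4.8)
The plan is to exploit $N$-homogeneity to reduce every quantity to the supremum norm $\|P\|_\infty=\|P\|_1$ together with a one-variable optimization over the scaling parameter. First I would record that for any $s\in(0,1]$, homogeneity gives $\|P\|_s=\sup_{x\in B_X}\|P(sx)\|=s^N\sup_{x\in B_X}\|P(x)\|=s^N\|P\|_\infty$. Feeding this into the reformulation $\|P\|_v=\sup_{s\in[0,1]}(1-s^2)\|P\|_s$ recorded in the preliminaries, the dependence on $P$ factors out completely:
\[
\|P\|_v=\|P\|_\infty\,\sup_{s\in[0,1]}(1-s^2)s^N .
\]
Thus the only object left to analyze is the scalar function $g(s):=(1-s^2)s^N$ on $[0,1]$.

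Next I would set $\delta_N:=\max_{s\in[0,1]}g(s)$ and compute it by elementary calculus: $g'(s)=s^{N-1}\bigl(N-(N+2)s^2\bigr)$ vanishes in $(0,1)$ only at $s_N^*:=\sqrt{N/(N+2)}$, which (since $g(0)=g(1)=0$) is therefore the unique interior maximizer. This yields the explicit value $\delta_N=\tfrac{2}{N+2}\bigl(\tfrac{N}{N+2}\bigr)^{N/2}$; the bound $0<\delta_N<1$ is immediate because $g(s)<1$ throughout $[0,1]$, and $\delta_N\to 0$ follows from $\tfrac{2}{N+2}\to 0$ together with $\bigl(\tfrac{N}{N+2}\bigr)^{N/2}\to e^{-1}$. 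Combining the factorization with $\|P\|_\infty=s^{-N}\|P\|_s$ gives the claimed identity $\|P\|_v=\tfrac{\delta_N}{s^N}\|P\|_s$ for every $s\in(0,1]$ simultaneously.

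For the equivalences I may assume $P\neq 0$ (the zero polynomial attains everything trivially and makes all norms vanish). The engine is a transfer principle for homogeneous polynomials: if $P$ attains its $s_1$-norm, then by Remark \ref{remarkopen} it does so at some $x_0=s_1u$ with $u\in S_X$, whence $s_1^N\|P(u)\|=\|P\|_{s_1}=s_1^N\|P\|_\infty$ forces $\|P(u)\|=\|P\|_\infty$; conversely, given such a $u\in S_X$, the point $s_2u$ witnesses attainment of the $s_2$-norm for every $s_2\in(0,1]$ since $\|P(s_2u)\|=s_2^N\|P\|_\infty=\|P\|_{s_2}$. This shows at once that (b) and (c) are equivalent, both amounting to $P$ attaining $\|P\|_\infty$ on $S_X$. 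To bring in (a), I would invoke Lemma \ref{lemma:carac}: for $P\neq 0$ the condition $\|P\|_v=(1-s_0^2)\|P\|_{s_0}$ reads $g(s_0)=\delta_N$, and by uniqueness of the maximizer this forces $s_0=s_N^*$. Hence (a) holds if and only if $P$ attains its $s_N^*$-norm, and since $s_N^*\in(0,1)$ this is a particular instance of (c) and, via the transfer principle, equivalent to (b) and (c).

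The computations are routine; the one point that needs care—and the step I would single out as the crux—is the uniqueness of the interior maximizer $s_N^*$ of $g$, because it is precisely this uniqueness that allows Lemma \ref{lemma:carac} to pin the optimal scale down to a single value $s_N^*$, thereby collapsing the apparently $v$-specific attainment condition (a) onto the uniform $s$-norm attainment described in (b)--(c).
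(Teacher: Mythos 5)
Your proposal is correct and follows essentially the same route as the paper: both reduce everything via $N$-homogeneity to the scalar optimization of $(1-s^2)s^N$ on $[0,1]$, identify the maximizer $\sqrt{N/(N+2)}$ and the value $\delta_N$, and derive the equivalences by rescaling a norming point on $S_X$. The only cosmetic difference is that you route (a) through Lemma \ref{lemma:carac} and the uniqueness of the maximizer, whereas the paper argues (a)$\Rightarrow$(b) and (c)$\Rightarrow$(a) directly; both arguments are sound.
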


\begin{proof} Let $N \in \N$. Let $P \in \mathcal{P}(^N X; Y)$ be fixed. For every $s\in(0,1]$, we have that 
\begin{eqnarray*}
\|P\|_v = \sup_{x \in B_X} (1 - \|x\|^2) \|P(x)\| &=& \sup_{x \in B_X} (1 - \|x\|^2) \left(\frac{\|x\|}{s}\right)^N \left\| P \left( s\frac{x}{\|x\|} \right) \right\| \\
&=& \sup_{r \in [0,1]} (r^N - r^{N+2})\frac{1}{s^N} \left\| P \right\|_s.
\end{eqnarray*}
Now, it is not difficult to see that 
\begin{equation*}
    \delta_N :=\sup_{r \in [0,1]} (r^N - r^{N+2}) = \left( \frac{N}{N+2} \right)^{\frac{N}{2}} - \left( \frac{N}{N+2} \right)^{\frac{N+2}{2}},
\end{equation*}
where the preceding supremum is attained at $r = \sqrt{\frac{N}{N+2}}$.

Suppose now that $P\in\mathcal{P}(^N X; Y)$ attains the $v$-norm. Let us prove that there is $s\in(0,1]$ such that $P$ attains the norm $\|\cdot\|_s$. Indeed, there is $x_0\in S_X$ and $r\in[0,1]$ such that $\|P\|_v=(1-r^2)\|P(rx_0)\|$. Hence, we have that
$$\|P\|_r=\sup\limits_{x\in S_X}\|P(rx)\|=\|P(rx_0)\| $$
and so $P$ attains the $s$-norm for $s=r$. On the other hand, if $P$ attains the $s$-norm for some $s\in(0,1]$, then $\|P\|_s=\|P(sx_0)\|$ for some $x_0\in S_X$. Clearly, for every $r\in(0,1]$,
$$ \|P\|_r=\left(\frac{r}{s}\right)^N\|P\|_s=\left(\frac{r}{s}\right)^N\|P(sx_0)\|=\|P(rx_0)\|. $$
This proves that $P$ attains the norm $\|\cdot\|_r$ for every $r\in(0,1]$. Finally, if $P$ attains the norm $\|\cdot\|_r$ for every $r\in (0,1]$ then there is $r_0\in(0,1)$ such that 
$$ \|P\|_v=\sup_{r\in(0,1)}(1-r^2)\|P\|_r=(1-r_0^2)\|P\|_{r_0}.$$
Thus, taking $x_0\in r_0S_X$ such that $\|P\|_{r_0}=\|P(x_0)\|$ we conclude that $P$ attains the norm $\|\cdot\|_v$ since
$\|P\|_v=(1-r_0^2)\|P\|_{r_0}=(1-\|x_0\|)\|P(x_0)\|$.
\end{proof}

%\SD{¿Borramos este corolario?} 

%\begin{corollary} For every $x^* \in X^*$, there exists $M > 0$ such that $\|x^*\|_v = M \|x^*\|_{\infty}$.
%\end{corollary}

Proposition \ref{Prop:Poly-Hom} above tell us in particular that everything which is done for $N$-homogeneous polynomials attaining the norm $\|\cdot\|_\infty$ applies to the norm $\|\cdot\|_v$. In other words, for every $X, Y$, and $N \in \N$, we have that 
\begin{equation*}
\NA_v \mathcal{P}(^N X; Y) = \NA \mathcal{P}(^N X; Y) 
\end{equation*} 
and also 
\begin{equation*} 
\overline{\NA \mathcal{P}(^N X; Y)}^{\|\cdot\|_{\infty}} = \mathcal{P}(^N X; Y) \;\;\Leftrightarrow\;\; \overline{\NA_v \mathcal{P}(^N X; Y)}^{\|\cdot\|_v} = \mathcal{P}(^N X; Y). 
\end{equation*} 
We send the reader to \cite{AAGM, CK} for examples where the denseness of the set $\NA \mathcal{P}(^N X; Y)$ holds true for some Banach spaces $X$ and $Y$.

%\SD{Hay que repensar el Remark \ref{remark:NA}, igual no está del todo bien escrito.}

%\begin{remark} \label{remark:NA} Let $X, Y$ be Banach spaces. Let $N \in \N$. It is a consequence of Proposition \ref{Prop:Poly-Hom} that $\NA_v \mathcal{P}(^N X; Y) = \NA \mathcal{P}(^N X; Y)$ and also that $\overline{\NA \mathcal{P}(^N X; Y)}^{\|\cdot\|_{\infty}} = \mathcal{P}(^N X; Y)$ if and only if $\overline{\NA_v \mathcal{P}(^N X; Y)}^{\|\cdot\|_v} = \mathcal{P}(^N X; Y)$. Therefore, as a consequence of \cite[Theorem 2.2, Theorem 2.4, and Theorem 2.7]{CK}, we have that
%\begin{equation*} 
%\overline{\NA_v \mathcal{P}(^N X; Y)}^{\|\cdot\|_v} = \mathcal{P}(^N X; Y)
%\end{equation*} 
%for every $N \in \N$ whenever
%\begin{itemize}
 %   \item[(1)] $X$ has the Dunford-Pettis property and a monotone shrinking basis and $Y=\K$. The same is valid when $Y=c_0$ or $Y=\ell_{\infty}$ (in general, when $Y$ has property $\beta$ of Lindenstrauss).
 %   \item[(2)] $X=\ell_1$ for every Banach space $Y$.
 %   \item[(3)] $X$ has the Radon-Nikodým property for every Banach space $Y$.
%\end{itemize}

%\end{remark}

\vspace{0.5cm}

\subsection{On non-homogeneous polynomials}

%\SD{(The Bishop-Phelps-Bollobás property for bilinear forms and polynomials. J. Math. Soc. Japan 66 (2014), no. 3, 957–979) Let $X$ be uniformly convex. Let $Y$ be an arbitrary Banach space. Let $\e \in (0, 1)$. Then, there exists $\eta(\e) > 0$ such that whenever $P \in \mathcal{P}(^N X; Y)$ with $\|P\|_{\infty}  = 1$ and $x_0 \in S_X$ satisfy 
%\begin{equation*}
%    \|P(x_0)\| > 1 - \eta(\e) 
%\end{equation*}
%there are $Q \in \mathcal{P}(^N X; Y)$ with $\|Q\|_{\infty} = 1$ and $x_1 \in S_X$ such that 
%\begin{equation*}
%    \|Q(x_1)\| = 1, \ \ \ \|x_1 - x_0\| < \e, \ \ \ \mbox{and} \ \ \ \|Q - P\|_{\infty} < \e.
%\end{equation*}
%{\bf Pregunto}: ¿Escribir un resultado análago para la norma $v$? ¿Cómo sería una versión del teorema de Bollobás para la $v$-norma?
%} 

As a consequence of Proposition \ref{Prop:Poly-Hom} we are forced  to move forward in the search of a class of holomorphic functions where there is a difference (in terms of norm-attaining) between the sup-norm $\|\cdot\|_{\infty}$ and the norm $\|\cdot\|_v$. Having this in mind, we now change our setting to the context of non-homogeneous polynomials (see Subsection \ref{subsec:preliminares} for necessary background).

As the reader will see next, the relation between the norms $\|\cdot\|_{\infty}$ and $\|\cdot\|_v$ for $\mathcal{P}^N(X;Y)$ goes in a totally different direction. We start with the following result, which provides a convenient way on how to compute the $v$-norm of a polynomial. 

\begin{theorem} \label{otra-manera-norma-v2} For every $N\in \N$, there exists $s(N) \in (0,1)$ such that for every pair $X,Y$ of Banach spaces and every polynomial $P \in \mathcal{P}^{N}(X; Y)$, we have
\begin{equation*}
    \|P\|_v = \sup_{s \in [0, s(N)]} (1 - s^2) \|P\|_s.
\end{equation*}
\end{theorem}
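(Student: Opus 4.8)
The plan is to reduce the statement to two monotonicity properties of the single-variable function $s \mapsto \|P\|_s$ and then to the scalar optimization already carried out in Proposition \ref{Prop:Poly-Hom}. Writing $P = \sum_{k=0}^N P_k$ with $P_k \in \mathcal{P}(^k X; Y)$, for each $x \in S_X$ the map $\lambda \mapsto P(\lambda x) = \sum_{k=0}^N \lambda^k P_k(x)$ is a $Y$-valued polynomial on $\C$, and the vector-valued Maximum Modulus Principle gives $\|P\|_s = \sup_{x \in S_X} \max_{|\lambda|=s}\|P(\lambda x)\|$. Setting $m(s) := \|P\|_s$, I aim to prove that (i) $m$ is non-decreasing on $(0,1]$, and (ii) $s \mapsto s^{-N} m(s)$ is non-increasing on $(0,1]$.

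Property (i) is immediate, since $s\overline{B_X} \subseteq s'\overline{B_X}$ whenever $s \leq s'$. Property (ii) is the heart of the matter. For fixed $x \in S_X$ I would pass to the \emph{reciprocal} polynomial $\tilde q_x(\mu) := \mu^N P(x/\mu) = \sum_{j=0}^N \mu^j P_{N-j}(x)$, which is a genuine $Y$-valued polynomial in $\mu$ of degree at most $N$. Since $\max_{|\mu|=\rho}\|\tilde q_x(\mu)\|$ is non-decreasing in $\rho$ (Maximum Modulus again), and the substitution $\mu = 1/\lambda$ turns $|\mu| = 1/s$ into $|\lambda| = s$ with $\|\tilde q_x(\mu)\| = s^{-N}\|P(\lambda x)\|$, the quantity $s^{-N}\max_{|\lambda|=s}\|P(\lambda x)\|$ is non-increasing in $s$. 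Taking the supremum over $x \in S_X$ preserves this, which yields (ii) uniformly in $X$, $Y$, and $P$.

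With (i) and (ii) established, I would set $s(N) := \sqrt{N/(N+2)}$, the maximizer of the scalar function $\phi(s) := s^N - s^{N+2} = s^N(1-s^2)$ identified in the proof of Proposition \ref{Prop:Poly-Hom}; recall that $\phi$ increases on $[0, s(N)]$ and decreases on $[s(N), 1]$, with $\phi(s(N)) = \delta_N$. For any $s \in [s(N), 1]$, property (ii) gives $m(s) \leq (s/s(N))^N m(s(N))$, hence
\[
(1-s^2)\,m(s) \leq \frac{s^N(1-s^2)}{s(N)^N}\,m(s(N)) = \frac{\phi(s)}{\phi(s(N))}\,\bigl(1-s(N)^2\bigr)\,m(s(N)) \leq \bigl(1-s(N)^2\bigr)\,m(s(N)),
\]
where the last step uses $\phi(s) \leq \phi(s(N))$ on $[s(N),1]$. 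Thus $(1-s^2)\|P\|_s \leq (1-s(N)^2)\|P\|_{s(N)}$ for every $s \geq s(N)$, so the supremum defining $\|P\|_v = \sup_{s\in[0,1]}(1-s^2)\|P\|_s$ is already attained on $[0, s(N)]$, which is exactly the asserted identity (note $s(N) \in (0,1)$ for every $N \geq 1$).

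The main obstacle is property (ii), the uniform statement that $s^{-N}\|P\|_s$ is non-increasing; everything else is either a direct application of the Maximum Modulus Principle or the one-variable analysis of $\phi$ borrowed from Proposition \ref{Prop:Poly-Hom}. The reciprocal-polynomial trick is the cleanest route to (ii); the only points needing care are that $\tilde q_x$ is genuinely a polynomial (so that the Maximum Modulus Principle applies on discs about the origin) and that passing to the supremum over $S_X$ does not destroy monotonicity, both of which are routine.
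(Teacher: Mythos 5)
Your argument is correct, but it takes a genuinely different route from the paper's. The paper first proves the quantitative lower bound $\|P\|_s\geq\bigl(1-\sum_{n=1}^N(1-s^n)\tfrac{n^n}{n!}\bigr)\|P\|_\infty$ (Lemma \ref{inequalities-polynomials2}), obtained from the estimate $\|P_n\|_\infty\leq\tfrac{n^n}{n!}\|P\|_\infty$ on the homogeneous components, and then selects $s(N)$ by a soft continuity argument so that $1-s(N)^2$ is dominated by the maximum of $(1-s^2)\bigl(1-\sum_{n=1}^N(1-s^n)\tfrac{n^n}{n!}\bigr)$; the resulting $s(N)$ is non-explicit and forced close to $1$. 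You instead establish the Hadamard-type monotonicity of $s\mapsto s^{-N}\|P\|_s$ on $(0,1]$ via the reciprocal polynomial $\tilde q_x(\mu)=\sum_{j=0}^N\mu^jP_{N-j}(x)$ and the vector-valued maximum modulus principle, and this single inequality immediately gives $(1-s^2)\|P\|_s\leq(1-s(N)^2)\|P\|_{s(N)}$ for every $s\geq s(N):=\sqrt{N/(N+2)}$. All the steps you flag as needing care do check out: the identity $\|P\|_s=\sup_{x\in S_X}\max_{|\lambda|=s}\|P(\lambda x)\|$, the extension of $\tilde q_x$ to a polynomial at $\mu=0$, and the preservation of monotonicity under suprema are routine (your property (i) is in fact never used in the final chain). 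What your approach buys is an explicit and optimal threshold: $\sqrt{N/(N+2)}$ is exactly the maximizer of $(1-s^2)s^N$, so the homogeneous example $x\mapsto x^*(x)^N$ of Remark \ref{remark:theorem-equiv} shows no smaller $s(N)$ can work, whereas the paper's choice is strictly larger. What the paper's approach buys is Lemma \ref{inequalities-polynomials2} itself, which is reused in Theorem \ref{Theorem:equivalence} to prove the equivalence of $\|\cdot\|_\infty$ and $\|\cdot\|_v$ on $\mathcal{P}^N(X;Y)$; your monotonicity lemma does not by itself yield that equivalence.
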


In order to prove Theorem \ref{otra-manera-norma-v2}, we need the following lemma.

\begin{lemma} \label{inequalities-polynomials2} Let $X, Y$ be Banach spaces. Let $N \in \N$ be fixed. Then, for every $s \in (0, 1)$ and every $P \in \mathcal{P}^N (X; Y)$, the following holds true:
\begin{equation} \label{inequality-polynomial-1} 
\|P\|_s\geq\left(1 - \sum_{n=1}^{N} (1 - s^n) \cdot \frac{n^n}{n!} \right) \|P\|_{\infty}.
\end{equation}

\end{lemma}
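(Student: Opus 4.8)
The plan is to reduce the inequality to a pointwise comparison between $P(x)$ and $P(sx)$ through the homogeneous decomposition of $P$. Write $P = \sum_{n=0}^N P_n$ with $P_n \in \mathcal{P}(^n X; Y)$, and let $A_n$ denote the symmetric $n$-linear map associated with $P_n$. Since each $P_n$ is $n$-homogeneous and $s \in (0,1)$ is a (real) scalar, we have $P_n(sx) = s^n P_n(x)$, so the constant term cancels and for every $x \in B_X$ one obtains the telescoping identity
\begin{equation*}
P(x) - P(sx) = \sum_{n=1}^N (1 - s^n)\, P_n(x).
\end{equation*}

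The key quantitative input is a bound on each homogeneous component in terms of $\|P\|_\infty$, and this is where the constant $\tfrac{n^n}{n!}$ enters. First I would record that $\|P_n\|_\infty \le \|P\|_\infty$: fixing $x\in\overline{B_X}$ and applying the vector-valued Cauchy estimate to the one-variable polynomial $\lambda \mapsto P(\lambda x) = \sum_{n=0}^N \lambda^n P_n(x)$ on the unit circle isolates $P_n(x)$ as a Cauchy coefficient, giving $\|P_n(x)\| \le \max_{|\lambda|=1}\|P(\lambda x)\| \le \|P\|_\infty$ because $\|\lambda x\| = \|x\| \le 1$. The polarization inequality $\|A_n\| \le \tfrac{n^n}{n!}\|P_n\|_\infty$ (see \cite{Mujica, HJ}), together with $\|P_n(x)\| = \|A_n(x,\dots,x)\| \le \|A_n\|\,\|x\|^n$, then yields for every $x \in \overline{B_X}$
\begin{equation*}
\|P_n(x)\| \le \frac{n^n}{n!}\,\|P\|_\infty .
\end{equation*}

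With these estimates in hand, I would conclude by a triangle-inequality argument evaluated along a maximizing sequence for the supremum norm. Choose $(x_m)_m \subseteq B_X$ with $\|P(x_m)\| \to \|P\|_\infty$. Since $x_m \in B_X$ we have $\|P(s x_m)\| \le \|P\|_s$, so combining the telescoping identity with the component bound gives
\begin{equation*}
\|P\|_s \ge \|P(s x_m)\| \ge \|P(x_m)\| - \sum_{n=1}^N (1 - s^n)\,\|P_n(x_m)\| \ge \|P(x_m)\| - \left(\sum_{n=1}^N (1 - s^n)\frac{n^n}{n!}\right)\|P\|_\infty .
\end{equation*}
Letting $m \to \infty$ and rearranging yields exactly \eqref{inequality-polynomial-1}.

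The argument presents no serious obstacle; the only nonroutine ingredient is the component estimate $\|P_n(x)\| \le \tfrac{n^n}{n!}\|P\|_\infty$, and the single point requiring care is that the pointwise difference bound must be applied along a maximizing sequence for $\|P\|_\infty$ rather than at a fixed point, since $\|P\|_\infty$ need not be attained on the open ball $B_X$. I note that $\tfrac{n^n}{n!}$ is precisely the polarization constant, and that the sharper estimate $\|P_n(x)\| \le \|P\|_\infty$ would already give a cleaner inequality; the stated (weaker) form is nonetheless all that is needed for the proof of Theorem \ref{otra-manera-norma-v2}.
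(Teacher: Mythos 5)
Your proposal is correct and follows essentially the same route as the paper: decompose $P$ into homogeneous components, use the telescoping identity $P(x)-P(sx)=\sum_{n=1}^N(1-s^n)P_n(x)$, bound each component by $\frac{n^n}{n!}\|P\|_\infty$, and pass to the supremum. The only cosmetic differences are that you re-derive the component estimate from Cauchy's integral formula and the polarization inequality (the paper simply cites \cite[Lemma 47, Chapter 1]{HJ}) and that you work along a maximizing sequence in $B_X$ rather than taking the supremum over $S_X$; your observation that the Cauchy estimate alone already gives the sharper bound $\|P_n\|_\infty\leq\|P\|_\infty$ in the complex setting is accurate but immaterial here.
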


\begin{proof} Let $P = \sum_{n=0}^{N} P_n \in \mathcal{P}^N (X; Y)$ be fixed where $P_n\in\mathcal{P} (^nX; Y)$ for each $n=0,1,\ldots, N$. Let $s \in (0, 1)$. For every $x \in S_X$, by \cite[Lemma 47, Chapter 1]{HJ}, we have that 

\begin{equation*}
\|P(x) - P(sx)\| = \left\| \sum_{n=1}^{N} (1-s^n) P_n(x) \right\| 
\leq \sum_{n = 1}^{N} (1 - s^n) \|P_n\|_{\infty} 
\leq \|P\|_{\infty} \sum_{n = 1}^{N} (1 - s^n) \cdot \frac{n^n}{n!}.
\end{equation*}
This means that for every $s \in (0,1)$ and $x \in S_X$, 
\begin{equation*}
\|P(sx)\| \geq \|P(x)\| - \|P(x) - P(sx)\| \\
\geq \|P(x)\| - \|P\|_{\infty} \cdot \sum_{n = 1}^{N} (1 - s^n) \cdot \frac{n^n}{n!}
\end{equation*}
which implies that 
\begin{equation*}
    \|P\|_s \geq \|P\|_{\infty} - \|P\|_{\infty} \sum_{n = 1}^{N} ( 1 - s^n) \cdot \frac{n^n}{n!} = \|P\|_{\infty} \left(1 - \sum_{n = 1}^{N} (1 - s^n) \cdot \frac{n^n}{n!} \right)
\end{equation*}
as desired.
\end{proof}

Now we can prove Theorem \ref{otra-manera-norma-v2}. 

\begin{proof}[Proof of Theorem \ref{otra-manera-norma-v2}] Let us first define the auxiliary functions $f,g:[0,1]\to\R$ given by
$$f(s)=(1-s^2)\left(1 - \sum_{n = 1}^{N} (1 - s^n) \cdot \frac{n^n}{n!} \right)\;\;\;\mbox{and}\;\;\;g(s)=1-s^2\;\;\;\;\;(s\in[0,1]).$$
Clearly, there is $z\in(0,1)$ such that
$$f(z)=\max\limits_{s\in[0,1]}f(s)>0.$$
Now, since $f(z)>0$ and $g(1)=0$, by continuity there is $s(N)\in(z,1)$ such that
\begin{equation}\label{pointineq}
    g\big(s(N)\big)\leq f(z).
\end{equation}
We will show that such an $s(N)$ satisfies the statement of the theorem. Indeed, let us consider a pair $X,Y$ of Banach spaces and a polynomial $P\in \mathcal{P}^N(X;Y)$. Then, by Lemma \ref{inequalities-polynomials2} and inequality \eqref{pointineq} we have
$$\begin{aligned}\sup\limits_{s\in[0,s(N)]}(1-s^2)\|P\|_s\ge&\|P\|_\infty\sup\limits_{s\in[0,s(N)]}f(s)=\|P\|_\infty f(z)\ge\|P\|_\infty g\big(s(N)\big)\\=&\sup\limits_{s\in[s(N),1]}(1-s^2)\|P\|_\infty\ge\sup\limits_{s\in[s(N),1]}(1-s^2)\|P\|_s.\end{aligned}$$
Therefore,
$$\|P\|_v=\sup\limits_{s\in[0,1]}(1-s^2)\|P\|_s=\sup\limits_{s\in[0,s(N)]}(1-s^2)\|P\|_s.$$
\end{proof}

Theorem \ref{otra-manera-norma-v2} will be very helpful during this section. In fact, as a consequence of it we are able to proof that the norms $\|\cdot\|_v$ and $\|\cdot\|_s$ are equivalent in $\mathcal{P}^N(X;Y)$ (see Theorem \ref{Theorem:equivalence} below). Before that, let us introduce some notation.

\vspace{0.2cm} 
\noindent 
{\bf Notation}: Let $N \in \N$ and $\alpha \in [0,1]$. Set
\begin{equation} \label{s-de-alpha-N}
    s(\alpha, N) := \left(1 - \alpha \cdot \frac{N!}{N^{N+1}} \right)^{\frac{1}{N}}.
\end{equation}
Notice that $0 < s(\alpha, N) < 1$ for every $\alpha\in(0,1)$ and $N\in \N$. Moreover, by using Stirling's approximation $N! \sim \sqrt{2\pi N} \left(\frac{N}{e}\right)^N$, we can see that $s(\alpha, N) \rightarrow 1$ as $N \rightarrow \infty$. 
\vspace{0.2cm}

Now we are in the position of proving that $\|\cdot\|_{\infty}$ and $\|\cdot\|_v$ are equivalent, even in the setting of non-homogeneous polynomials. 

\begin{theorem} \label{Theorem:equivalence} Let $X, Y$ be Banach spaces and $N \in \N$. Then, for every $\alpha \in (0, 1)$, we have 
\begin{itemize}
    \item[(1)] $\|P\|_{s(\alpha, N)} \geq (1 - \alpha) \|P\|_{\infty}$ for every $P \in \mathcal{P}^N(X;Y)$.
    \item[(2)] $\|P\|_v \geq [1 - s(\alpha, N)^2] (1 - \alpha) \|P\|_{\infty}$ for every $P \in \mathcal{P}^N(X;Y)$.
\end{itemize}
In particular, (1) and (2) imply that all the norms $\|\cdot\|_{s(\alpha,N)}$, $\|\cdot\|_{\infty}$, and $\|\cdot\|_v$ are equivalent to each other in $\mathcal{P}^N( X; Y)$. 
\end{theorem}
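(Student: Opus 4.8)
The plan is to derive both inequalities directly from Lemma \ref{inequalities-polynomials2}, exploiting that the quantity $s(\alpha,N)$ has been engineered precisely so that its lower bound collapses to the clean constant $1-\alpha$. For part (1), I would start from the estimate $\|P\|_s \geq \left(1 - \sum_{n=1}^N (1-s^n)\tfrac{n^n}{n!}\right)\|P\|_{\infty}$ of that lemma, so that it suffices to show that for $s = s(\alpha,N)$ the subtracted sum is at most $\alpha$. The key observation is that, since $s \in (0,1)$, the map $n \mapsto 1 - s^n$ is increasing, so every factor $1-s^n$ with $n \leq N$ is dominated by $1-s^N$. This lets me pull $1-s^N$ out of the sum and reduce matters to bounding the purely numerical quantity $\sum_{n=1}^N \tfrac{n^n}{n!}$.

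The numerical heart of the argument is then the inequality $\sum_{n=1}^N \tfrac{n^n}{n!} \leq \tfrac{N^{N+1}}{N!}$. Writing $a_n = \tfrac{n^n}{n!}$, the ratio $a_{n+1}/a_n = (1+1/n)^n \geq 1$ shows that $(a_n)_n$ is nondecreasing, so its average over $n=1,\dots,N$ cannot exceed $a_N$; equivalently $\sum_{n=1}^N a_n \leq N a_N = \tfrac{N^{N+1}}{N!}$. Combining this with the monotonicity step and plugging in $1 - s(\alpha,N)^N = \alpha\cdot\tfrac{N!}{N^{N+1}}$ from the definition \eqref{s-de-alpha-N} gives
$$\sum_{n=1}^N (1-s^n)\frac{n^n}{n!} \leq (1-s^N)\frac{N^{N+1}}{N!} = \alpha,$$
which is exactly the bound (1) requires.

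Part (2) should then be immediate: since $\|P\|_v = \sup_{s\in[0,1]}(1-s^2)\|P\|_s$, evaluating at $s = s(\alpha,N)$ and invoking (1) yields
$$\|P\|_v \geq (1-s(\alpha,N)^2)\|P\|_{s(\alpha,N)} \geq (1-s(\alpha,N)^2)(1-\alpha)\|P\|_{\infty}.$$

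Finally, for the equivalence of the three norms I would pair these lower bounds with the trivial upper bounds $\|P\|_s \leq \|P\|_{\infty}$ (valid because $sB_X \subseteq B_X$ for $s\in(0,1]$) and $\|P\|_v \leq \|P\|_{\infty}$ (since each weight $1-s^2 \leq 1$); transitivity then closes the argument. I expect the only genuinely delicate point to be the numerical estimate $\sum_{n=1}^N \tfrac{n^n}{n!} \leq \tfrac{N^{N+1}}{N!}$, since everything else is bookkeeping once the definition of $s(\alpha,N)$ is unpacked; and indeed the appearance of $\tfrac{N!}{N^{N+1}}$ in \eqref{s-de-alpha-N} is precisely what makes the chain of inequalities terminate at $\alpha$ exactly rather than at a cruder multiple of it.
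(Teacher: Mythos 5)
Your argument is correct and follows essentially the same route as the paper: both apply Lemma \ref{inequalities-polynomials2} at $s=s(\alpha,N)$, bound each term of the sum via $1-s^n\leq 1-s^N$ and the monotonicity of $n^n/n!$ to get the subtracted quantity equal to $\alpha$ exactly, and then deduce (2) by evaluating the supremum defining $\|\cdot\|_v$ at $s(\alpha,N)$. The paper compresses the numerical estimate into the single bound $\sum_{n=1}^N(1-s^n)\frac{n^n}{n!}\leq\bigl[1-s(\alpha,N)^N\bigr]\frac{N^N}{N!}\cdot N$, which is precisely the computation you spell out.
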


\begin{proof} Let $\alpha \in (0,1)$. Let $P \in \mathcal{P}^N(X; Y)$ be fixed. By using Lemma \ref{inequalities-polynomials2}, we have
\begin{equation*}
    \|P\|_{s(\alpha,N)} \geq \bigg(1-\Big[ 1 - s(\alpha, N)^N \Big] \cdot \frac{N^N}{N!} \cdot N\bigg) \cdot \|P\|_{\infty}=(1-\alpha)\|P\|_\infty
\end{equation*}
and we prove (1). To prove (2), we use (1) as follows:
\begin{eqnarray*}
\|P\|_v = \sup_{s \in [0,1]} ( 1 - s^2 ) \sup_{x \in S_X} \|P(sx)\| &\geq& \Big[1 - s(\alpha, N)^2 \Big] \sup_{x \in S_X} \|P(s(\alpha, N)x)\| \\
&=& \Big[ 1 - s(\alpha, N)^2 \Big] \|P\|_{s(\alpha, N)} \\
&\stackrel{(1)}{\geq}& \Big[ 1 - s(\alpha, N)^2 \Big] (1 - \alpha) \|P\|_{\infty}. 
\end{eqnarray*}
\end{proof}

In particular, if we are interested in tackling problems related to approximating polynomials by polynomials that attain the $v$-norm, then such an approximation can be done in any of the norms $\|\cdot\|_v$ or $\|\cdot\|_{\infty}$. On the other hand, Theorem \ref{Theorem:equivalence} is no longer true for $\mathcal{P}(X; \C)$. Indeed, we have the following simple example.

\begin{remark} \label{remark:theorem-equiv} Let $N \in \N$ and fix $x^* \in S_{X^*}$. Define $f_N: X \rightarrow \C$ by 
\begin{equation*}
    f_N(x) := x^*(x)^N \ \ \ (x \in X).
\end{equation*}
Then, $f_N$ is a holomorphic function (in fact, it is a homogeneous polynomial) for each $N \in \N$. Notice that $\|f_N\|_{\infty} = 1$ and 
\begin{equation*}
\|f_N\|_v = \sup_{s \in [0,1]}(1 - s^2) \sup_{x \in S_X} |x^*(sx)^N| = \sup_{s \in [0,1]} (1 - s^2) s^N.   
\end{equation*}
This last supremum we have calculated already (see the proof of Proposition \ref{Prop:Poly-Hom}) and therefore
\begin{equation*}
\|f_N\|_v = \sup_{s \in [0,1]} (1 - s^2) s^N = \left( \frac{N}{N+2}\right)^{\frac{N}{2}} - \left(\frac{N}{N+2}\right)^{\frac{N+2}{2}} \longrightarrow 0 \ \mbox{as} \ N \rightarrow \infty. 
\end{equation*}
This shows that $\|\cdot\|_{\infty}$ and $\|\cdot\|_v$ cannot be equivalent on $\mathcal{P}(X; \C) := \bigcup_{N \in \N} \mathcal{P}^N (X; \C)$. 
\end{remark}

%%%%%%%%%%%%%%%%%%%%%%%%%%%%%%%%%%%%%%%%%%%%%%%%%%%%%%%%%%%%%%%%%%

\vspace{0.5cm}

Our next aim is to construct a non-homogeneous polynomial which attains the $v$-norm but not the sup-norm $\|\cdot\|_{\infty}$ (see Proposition \ref{Example:non-homogeneous-poly-Pr} below). To do so, we need first the following auxiliary result. 

\begin{lemma} \label{Lemma:Pr} Let $1 \leq p < \infty$. For every $r \in (0, 1)$ and for every $k \in \N$ with $k \geq p$, there exists a polynomial $P_r \in \mathcal{P}^{k+1}(\ell_p; \C)$ such that 
\begin{itemize}
    \item[(1)] for every $0\leq s \leq r$, $P_r$ attains its supremum over $sB_{\ell_p}$. 
    \item[(2)] for every $s > r$, $P_r$ does not attain its supremum over $sB_{\ell_p}$. 
\end{itemize}
\end{lemma}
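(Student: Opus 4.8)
The plan is to take $P_r$ to be a \emph{diagonal} polynomial, so that computing its $s$-norm collapses to a scalar optimization whose attainment is governed by the upper envelope of a family of affine functions. Concretely, I would set, in terms of the standard unit vectors $e_n$ of $\ell_p$,
\[
P_r(x)=x_1^{\,k}+\sum_{n=2}^{\infty}\bigl(a_n x_n^{\,k}+b_n x_n^{\,k+1}\bigr),\qquad x=(x_n)_n\in\ell_p,
\]
with $0<b_n<\tfrac1r$, $b_n\uparrow\tfrac1r$, and $a_n:=1-b_n r\in(0,1]$. Since $k\ge p$ (so also $k+1>p$), every diagonal homogeneous series $\sum_n c_n x_n^{\,j}$ with bounded coefficients is a bounded $j$-homogeneous polynomial on $\ell_p$, because $\sum_n|x_n|^{\,j}\le\|x\|_\infty^{\,j-p}\sum_n|x_n|^{\,p}\le\|x\|_p^{\,j}$. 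Hence $P_r$ is a genuine polynomial of degree $k+1$, i.e. $P_r\in\mathcal{P}^{k+1}(\ell_p;\C)$.

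The second step is to reduce the complex maximization defining $\|P_r\|_s$ to a nonnegative real one and to evaluate it. As all coefficients are nonnegative and $P_r$ is diagonal, choosing each $x_n\ge0$ aligns the phases of every monomial, so
\[
\|P_r\|_s=\sup\Bigl\{\,\textstyle\sum_n\bigl(a_n t_n^{\,k}+b_n t_n^{\,k+1}\bigr):\ t_n\ge0,\ \sum_n t_n^{\,p}\le s^{\,p}\,\Bigr\},
\]
where I put $a_1:=1$, $b_1:=0$; moreover a complex maximizer of the left-hand side exists iff a maximizer of the right-hand side does. On the feasible set $t_n\le s$, and since $k\ge p$ and $k+1>p$ one has $t_n^{\,k}\le s^{\,k-p}t_n^{\,p}$ and $t_n^{\,k+1}\le s^{\,k+1-p}t_n^{\,p}$; therefore
\[
\textstyle\sum_n\bigl(a_n t_n^{\,k}+b_n t_n^{\,k+1}\bigr)\le s^{\,k-p}\sum_n t_n^{\,p}\,(a_n+b_n s)\le s^{\,k}\,g(s),\qquad g(s):=\sup_n\,(a_n+b_n s).
\]
Testing the single coordinates $x=se_n$ yields $\|P_r\|_s\ge s^{\,k}\sup_n(a_n+b_n s)=s^{\,k}g(s)$, so the estimate is sharp and $\|P_r\|_s=s^{\,k}g(s)$.

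The attainment analysis now follows from this formula. For \textbf{(1)} I only need one maximizer: when $0\le s\le r$ one has $a_n+b_n s=1+b_n(s-r)\le1$ for every $n$, so $g(s)=1$ is attained at $n=1$, and $x_0=se_1\in s\overline{B_{\ell_p}}$ gives $|P_r(se_1)|=s^{\,k}=\|P_r\|_s$. For \textbf{(2)}, when $s>r$ every coordinate satisfies $a_n+b_n s=1+b_n(s-r)<1+\tfrac1r(s-r)=s/r$, so the supremum $g(s)=s/r$ is \emph{not} attained. Consequently, for any feasible $t\neq0$, writing $w_n:=t_n^{\,p}/\sum_m t_m^{\,p}$ as a probability vector gives the strict average inequality $\sum_n t_n^{\,p}(a_n+b_n s)<g(s)\sum_m t_m^{\,p}\le s^{\,p}g(s)$, whence $\sum_n(a_n t_n^{\,k}+b_n t_n^{\,k+1})<s^{\,k}g(s)=\|P_r\|_s$; since $t=0$ also fails, no maximizer exists and $P_r$ does not attain its supremum over $sB_{\ell_p}$.

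The heart of the argument — and the step I expect to require the most care — is the reduction of the infinite-dimensional complex problem to the scalar quantity $s^{\,k}g(s)$: the phase-alignment passage to nonnegative reals, and the two termwise estimates $t_n^{\,j}\le s^{\,j-p}t_n^{\,p}$ that force the extremal mass to concentrate on a single coordinate. The threshold at $r$ is then produced entirely at the level of the affine envelope $g$, by bending the lines $\ell_n(s)=1+b_n(s-r)$ through the common point $(r,1)$ with slopes $b_n\uparrow\tfrac1r$, so that the envelope is a genuine maximum on $[0,r]$ but only a (strictly unattained) supremum on $(r,1]$.
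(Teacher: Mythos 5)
Your proposal is correct and follows essentially the same route as the paper: a diagonal polynomial on $\ell_p$ mixing $k$- and $(k+1)$-homogeneous terms, with coordinatewise coefficients arranged so that the per-coordinate value has its maximum at the first coordinate when $s\le r$ but only a strict (unattained) supremum along the tail when $s>r$, the reduction to single coordinates resting on the same H\"older-type comparison $\sum_n |x_n|^k\le \|x\|_p^k$. The paper's coefficients are $1+\tfrac{r^{-k}}{n}$ and $1-\tfrac{r^{-k-1}}{n}$ rather than your $a_n,b_n$, but the mechanism and the threshold at $s=r$ are identical.
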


\begin{proof} Let $\{e_n: n \in \N \}$ be the canonical basis of $\ell_p$. Take $r \in (0, 1)$ and $k \in \N$ with $k \geq p$. Define $P_r \in \mathcal{P}^{k+1}(\ell_p; \C)$ by 
\begin{equation*} 
P_r(x) := \sum_{n \in \N} \left( 1 + \frac{r^{-k}}{n} \right) x_n^k + \left(1 - \frac{r^{-k-1}}{n} \right) x_n^{k+1} \ \ \ (x \in \ell_p).
\end{equation*}
Let us prove (1). Let $s \leq r$ and take $x \in S_{\ell_p}$ arbitrary. Since $\|x\|_k^k \leq \|x\|_p^k = 1$ and $s/r \leq 1$, we have that 
\begin{eqnarray*}
|P_r(sx)| &\leq& \sum_{n \in \N} \left( s^k + \frac{(s/r)^k}{n} \right) |x_n|^k + \left( s^{k+1} - \frac{(s/r)^{k+1}}{n} \right) |x_n|^{k+1} \\
&\leq& \sum_{n \in \N} \left[ s^k + s^{k+1} + \frac{1}{n} \left( \left(\frac{s}{r}\right)^k - \left( \frac{s}{r} \right)^{k+1} \right) \right] |x_n|^{k} \\
&\leq& \left[ s^k + s^{k+1} +  \left(\frac{s}{r}\right)^k - \left( \frac{s}{r} \right)^{k+1}  \right] \|x\|_k^k \\
&\leq& s^k + s^{k+1} + \left( \frac{s}{r} \right)^k - \left( \frac{s}{r} \right)^{k+1}. 
\end{eqnarray*}
This shows that 
\begin{equation*}
 \sup_{x \in S_{\ell_p}} |P_r(sx)| \leq s^k + s^{k+1} + \left( \frac{s}{r} \right)^k - \left( \frac{s}{r} \right)^{k+1}.
\end{equation*}
On the other hand, 
\begin{equation*}
P_r(s e_1) = s^k + s^{k+1} + \left( \frac{s}{r} \right)^k - \left( \frac{s}{r} \right)^{k+1}.
\end{equation*}
This proves (1). Finally, let us move on to (2). Assume that $s > r$. On the one hand, notice that 
\begin{equation*}
P_r(s e_n) = s^k + s^{k+1} + \frac{1}{n} \left( \left( \frac{s}{r} \right)^k - \left( \frac{s}{r} \right)^{k+1} \right) \rightarrow s^k + s^{k+1} \ \ \mbox{as} \ \ n \rightarrow \infty. 
\end{equation*}
This shows that $\sup_{x \in S_{\ell_p}} P_r(sx) \geq s^k + s^{k+1}$. On the other hand, we will prove that this is in fact the supremum and that it is never attained. Indeed, let $x \in S_{\ell_p}$ be arbitrary. Since $\|x\|_k^k \leq \|x\|_p^k = 1$ and $s/r > 1$, we have that 
\begin{equation*}
    |P_r(sx)| \leq \sum_{n \in \N} \left[ s^k + s^{k+1} + \frac{1}{n} \left( \left( \frac{s}{r} \right)^k - \left( \frac{s}{r} \right)^{k+1} \right) \right] |x_n|^k < (s^k + s^{k+1}) \|x\|_k^k \leq s^k + s^{k+1}.
\end{equation*}
This proves (2) and we are done.
\end{proof}

Now we are ready to provide the desired example. In order to do this, we invoke Theorem \ref{otra-manera-norma-v2} and Lemma \ref{Lemma:Pr}. 

\begin{proposition} \label{Example:non-homogeneous-poly-Pr} Let $1 \leq p < \infty$ and $k \in \N$ with $k \geq p$. Then, there exists a non-homogeneous polynomial $P \in \mathcal{P}^{k+1} (\ell_p; \C)$ such that $P \in \NA_v \mathcal{P}^{k+1}(\ell_p; \C)$ but $P \not\in \NA \mathcal{P}^{k+1}(\ell_p;\C)$.  
\end{proposition}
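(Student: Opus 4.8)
The plan is to feed the polynomial $P_r$ of Lemma \ref{Lemma:Pr} into the description of the $v$-norm given by Theorem \ref{otra-manera-norma-v2}, tuning the parameter $r$ against the threshold $s(N)$. The key observation is that the supremum norm is just the $s$-norm at the endpoint $s=1$, while by Theorem \ref{otra-manera-norma-v2} (applied with $N=k+1$) the $v$-norm only depends on the $s$-norms for $s\in[0,s(N)]$. Hence, if $r$ is chosen with $s(N)\le r<1$, then $P_r$ will attain every $s$-norm relevant to the computation of $\|P_r\|_v$, yet fail to attain the $s$-norm at $s=1>r$.

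Concretely, I would set $N=k+1$, let $s(N)\in(0,1)$ be the constant of Theorem \ref{otra-manera-norma-v2}, fix any $r\in[s(N),1)$, and take $P:=P_r\in\mathcal{P}^{k+1}(\ell_p;\C)$ as in Lemma \ref{Lemma:Pr}; this $P$ is genuinely non-homogeneous, carrying both a degree-$k$ and a degree-$(k+1)$ part. For the negative assertion, since $r<1$ the value $s=1$ satisfies $s>r$, so Lemma \ref{Lemma:Pr}(2) shows that $P$ does not attain its supremum over $B_{\ell_p}$; that is, $P\notin\NA\mathcal{P}^{k+1}(\ell_p;\C)$.

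For the positive assertion, I would invoke Theorem \ref{otra-manera-norma-v2} to write $\|P\|_v=\sup_{s\in[0,s(N)]}(1-s^2)\|P\|_s$. The map $s\mapsto(1-s^2)\|P\|_s$ is continuous on the compact interval $[0,s(N)]$, so it attains its maximum at some $s_0\in[0,s(N)]$; and because $P(0)=0$ forces the value at $s=0$ to equal $0<\|P\|_v$, we in fact get $s_0\in(0,s(N)]\subseteq(0,1)$. Since $s_0\le s(N)\le r$, Lemma \ref{Lemma:Pr}(1) says $P$ attains its $s_0$-norm. Thus $\|P\|_v=(1-s_0^2)\|P\|_{s_0}$ with $s_0\in(0,1)$ and $P$ attaining the $s_0$-norm, and Lemma \ref{lemma:carac} then yields $P\in\NA_v\mathcal{P}^{k+1}(\ell_p;\C)$.

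The main (though mild) obstacle is precisely the coordination of the two thresholds: one must place the maximizer $s_0$ of the weighted profile inside the range $[0,r]$ on which $P_r$ does attain its $s$-norm—this is exactly what dictates $r\ge s(N)$—while keeping $r<1$ so that attainment fails at the sup-norm endpoint. The remaining technical checks, namely the continuity of $s\mapsto\|P\|_s$ (which follows from the uniform continuity of $P$ on $B_{\ell_p}$) and the strict positivity of $s_0$ (which follows from $P(0)=0$), are what license the application of Lemma \ref{lemma:carac}.
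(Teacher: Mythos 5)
Your proposal is correct and follows essentially the same route as the paper: choose $r\in[s(k+1),1)$, apply Theorem \ref{otra-manera-norma-v2} plus compactness to locate a maximizer $s_0\le s(k+1)\le r$, use Lemma \ref{Lemma:Pr}(1) for attainment at $s_0$ and Lemma \ref{Lemma:Pr}(2) at $s=1$ for non-attainment of the sup-norm. The only cosmetic difference is that you pass through Lemma \ref{lemma:carac} (hence checking $s_0>0$ via $P_r(0)=0$), whereas the paper concludes the $v$-norm attainment directly from $\|P_r\|_v=(1-s_0^2)\|P_r\|_{s_0}=(1-\|x_0\|^2)|P_r(x_0)|$.
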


\begin{proof} Let $1 \leq p < \infty$ and take $k \in \N$ with $k \geq p$. Let $s(k+1)$ be the quantity given by Theorem \ref{otra-manera-norma-v2}. For $s(k+1) \leq r < 1$, we consider the positive non-homogeneous polynomial $P_r \in \mathcal{P}^{k+1}(\ell_p; \C)$ as in Lemma \ref{Lemma:Pr}. By Theorem \ref{otra-manera-norma-v2}, we have that 
\begin{equation*}
\|P_r\|_v = \sup_{s \in \left[ 0, s(k+1) \right]} (1 - s^2) \|P_r\|_s.  
\end{equation*}
By compactness, there exists $s_0 \in \left[ 0, s(k+1) \right]$ such that 
\begin{equation*}
\|P_r\|_v =  (1 - s_0^2) \|P_r\|_{s_0}. 
\end{equation*}
Now, since $s_0 \leq s(k+1) \leq r$, by item (1) of Lemma \ref{Lemma:Pr}, there exists $x_0 \in s_0 S_{\ell_p}$ such that 
\begin{equation*}
    \|P_r\|_{s_0} = |P_r(x_0)| 
\end{equation*}
Therefore, 
\begin{eqnarray*}
    \|P_r\|_v =  (1 - s_0^2) \|P_r\|_{s_0} &=& (1 - \|x_0\|^2) |P_r(x_0)|.
\end{eqnarray*}
This shows that $P_r \in \NA_v \mathcal{P}^{k+1}(\ell_p; \C)$. On the other hand, taking $s=1$ in item (2) of Lemma \ref{Lemma:Pr}, we see that $P_r \not\in \NA \mathcal{P}^{k+1}(\ell_p; \C)$.
\end{proof}

Using a similar but simpler argument we can construct a non-homogeneous polynomial $Q \in \mathcal{P}^{k+1}(\ell_p; \C)$ which belongs to $\NA \mathcal{P}^{k+1}(\ell_p; \C)$ but not to $\NA_v \mathcal{P}^{k+1}(\ell_p; \C)$ whenever $k \geq p$. Indeed, we have the following result.

\begin{proposition} \label{Example:non-homogeneous-Q} Let $1 \leq p < \infty$. For every $k \in \N$ with $k \geq p$, there exists $Q \in \mathcal{P}^{k+1}(\ell_p; \C)$ such that 
\begin{itemize}
\item[(1)] $Q \in \NA \mathcal{P}^{k+1} (\ell_p; \C)$
    \item[(2)] For every $0< s < 1$, $Q$ does not attain the $s$-norm. In particular, $Q \not\in \NA_v \mathcal{P}^{k+1}(\ell_p; \C)$.
\end{itemize}
\end{proposition}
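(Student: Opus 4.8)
The plan is to reverse the construction of Lemma \ref{Lemma:Pr}. There the two perturbations $\pm\frac{1}{n}$ were tuned so that the supremum over $s\overline{B_{\ell_p}}$ was attained (at $e_1$) precisely for $s\le r$ and escaped to infinity for $s>r$. Here I want attainment to occur \emph{only} at the single radius $s=1$ and to fail for every $s<1$, so I simply flip the signs of the perturbations. Concretely, for $k\ge p$ I would set
\begin{equation*}
Q(x):=\sum_{n\in\N}\Big(1-\tfrac1n\Big)x_n^{k}+\Big(1+\tfrac1n\Big)x_n^{k+1}\qquad(x\in\ell_p).
\end{equation*}
Since $k\ge p$, the monomials $\sum_n x_n^k$ and $\sum_n x_n^{k+1}$ define bounded homogeneous polynomials on $\ell_p$ and all coefficients are bounded by $2$, so $Q$ is a well-defined non-homogeneous polynomial in $\mathcal{P}^{k+1}(\ell_p;\C)$ of degree $k+1$.

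For item (1) I would show $\|Q\|_\infty=2$ and that it is attained. The two elementary estimates driving everything are $|x_n|^{k+1}\le|x_n|^k$ (valid since $|x_n|\le1$) and $\|x\|_k\le\|x\|_p=1$ for $x\in S_{\ell_p}$, which give
\[
|Q(x)|\le\sum_{n}\Big[(1-\tfrac1n)+(1+\tfrac1n)\Big]|x_n|^{k}=2\|x\|_k^{k}\le2.
\]
The point of the sign choice is that the coefficient of $x_1^{k}$ vanishes, so $Q(e_1)=0+2=2$; hence $\|Q\|_\infty=2$ is attained at $e_1$ and $Q\in\NA\mathcal{P}^{k+1}(\ell_p;\C)$.

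For item (2), fix $0<s<1$. Evaluating along the basis gives
\[
Q(se_n)=s^k+s^{k+1}+\tfrac1n\big(s^{k+1}-s^{k}\big)\longrightarrow s^k+s^{k+1},
\]
and since $s<1$ the perturbation $\frac1n(s^{k+1}-s^k)$ is strictly negative, so these values increase to $s^k+s^{k+1}$ without ever reaching it. For the matching upper bound, the same two estimates yield, for every $x\in S_{\ell_p}$,
\[
|Q(sx)|\le(s^k+s^{k+1})\sum_n|x_n|^k-(s^{k}-s^{k+1})\sum_n\tfrac1n|x_n|^k\le(s^k+s^{k+1})-(s^{k}-s^{k+1})\sum_n\tfrac1n|x_n|^k.
\]
Because $s^k-s^{k+1}>0$ and $\sum_n\frac1n|x_n|^k>0$ for $x\ne0$, the right-hand side is \emph{strictly} below $s^k+s^{k+1}$. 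Combining the two bounds gives $\|Q\|_s=s^k+s^{k+1}$, and by Remark \ref{remarkopen} the $s$-norm can only be attained on $sS_{\ell_p}$, where we have just seen the value is never reached; hence $Q$ does not attain its $s$-norm. Finally Lemma \ref{lemma:carac} forces $Q\notin\NA_v\mathcal{P}^{k+1}(\ell_p;\C)$, since attaining the $v$-norm would require attaining the $s_0$-norm for some $s_0\in(0,1)$.

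The computations are routine; the only genuinely delicate point is the sign bookkeeping that makes $s=1$ the exact threshold: the perturbation must vanish at $s=1$ (to recover attainment at $e_1$) while being strictly negative on $(0,1)$ (to push the maximiser out to infinity). I expect the step needing the most care is verifying that the upper bound $s^k+s^{k+1}$, obtained from $|x_n|^{k+1}\le|x_n|^k$ and $\|x\|_k\le1$, is strict \emph{simultaneously for all nonzero $x$}, since this is precisely what upgrades ``supremum not attained along $(e_n)$'' to ``supremum not attained at all''.
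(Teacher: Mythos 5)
Your proposal is correct and coincides with the paper's own proof: you construct exactly the same polynomial $Q(x)=\sum_n(1-\tfrac1n)x_n^k+(1+\tfrac1n)x_n^{k+1}$ and run the same estimates ($|x_n|^{k+1}\le|x_n|^k$ and $\|x\|_k\le\|x\|_p$) to get attainment at the basis vectors for $s=1$ and a strict upper bound $\|Q\|_s=s^k+s^{k+1}$ never attained for $s<1$. Your write-up of the strictness of the upper bound (isolating the term $-(s^k-s^{k+1})\sum_n\tfrac1n|x_n|^k>0$) is in fact slightly more careful than the paper's, which absorbs it into a single displayed chain.
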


\begin{proof} Let $\{e_n: n \in \N\}$ be the canonical basis of $\ell_p$. Take $k \in \N$ to be such that $k \geq p$. Define $Q \in \mathcal{P}^{k+1}(\ell_p; \C)$ by 
\begin{equation*}
 Q(x) := \sum_{n \in \N} \left(1 - \frac{1}{n} \right) x_n^k + \left(1 + \frac{1}{n} \right) x_n^{k+1} \ \ \ (x \in \ell_p). 
\end{equation*}
Clearly $Q(0)=0$. For every $x \in S_{\ell_p}$, since $\|x\|_k^k \leq \|x\|_p^k = 1$, we have that 
\begin{equation*}
|Q(x)| \leq \sum_{n \in \N} \left(1 - \frac{1}{n}\right) |x_n|^k + \left(1 + \frac{1}{n} \right) |x_n|^{k+1} \leq 2 \|x\|_k^k \leq 2
\end{equation*}
and since $Q(e_n) = 2$ for every $n \in \N$, we have that (1) holds true. Now, let $s \in (0, 1)$. Since $s^{k+1} - s^k < 0$, we have that 
\begin{equation*}
|Q(e_n)| = \left(1 - \frac{1}{n}\right) s^k + \left(1 + \frac{1}{n}\right) s^{k+1} = s^k + s^{k+1} + \frac{s^{k+1}-s^k}{n} \rightarrow s^{k} + s^{k+1}
\end{equation*}
as $n \rightarrow \infty$. This shows that $\sup_{x \in S_{\ell_p}} |Q(sx)| \geq s^k + s^{k+1}$. Now, let $x \in S_{\ell_p}$ be arbitrary. Since $\|x\|_k^k \leq \|x\|_p^k = 1$ and $s^{k+1} - s^k < 0$,     we have 
\begin{eqnarray*}
|Q(sx)| &\leq& \sum_{n \in \N} \left[ \left(1 - \frac{1}{n} \right) s^k + \left(1 + \frac{1}{n}\right) s^{k+1} \right] |x_n|^k \\
&\leq& s^k + s^{k+1} + \left(\frac{s^{k+1} - s^k}{n}\right) \\
&<& s^k + s^{k+1} 
\end{eqnarray*}
which proves that $\sup_{x \in S_{\ell_p}} |Q(sx)| \leq s^k + s^{k+1}$ and it is never attained. This proves (2). Therefore, for every $x \in B_{\ell_p}$ with $x \not= 0$ we have that 
\begin{equation*}
(1 - \|x\|^2) |Q(x)|\stackrel{(2)}{<}(1 - \|x\|^2) \|Q\|_{\|x\|} \leq  \|Q\|_v 
\end{equation*}
and now we are done. 
\end{proof}

\section{Denseness of the set $\NA_v \mathcal{A}_u(B_X;Y)$} \label{Sec3}

Our goal in this section is to find classes of Banach spaces $X$ such that the set $\NA_v \mathcal{P}^{N} (X; \C)$ is dense in $\mathcal{P}^{N} (X; \C)$. As a consequence, we will be studying also when the set $\NA_v \mathcal{A}_u (B_X; \C)$ is $\|\cdot\|_{\infty}$ norm dense in $\mathcal{A}_u (B_X; \C)$ .

We will show that this density does {\it not} hold in general (see Theorem \ref{counterexample-denseness}) but it does when $X$ is taken to be uniformly convex. In fact, for every Banach space $Y$ and every $N\in\N$, we show that 
\begin{equation*}
\overline{\NA_v \mathcal{P}^{N} (X; Y)}=\mathcal{P}^{N} (X; Y)
\end{equation*} 
whenever $X$ is uniformly convex (see Theorem \ref{mainthdensity}), where the closure is taken either with the $v$-norm or supremum norm thanks to Theorem \ref{Theorem:equivalence}. The issue here is that weighted norms can be attained at any point of the open unit ball (including the origin!) while with the supremum norm it is just a matter of finding a point in the unit sphere. There are several ways to face this difficulty. The first one that we try is to approximate a polynomial $P$ by a polynomial $Q$ attaining its supremum over {\it all} unit spheres of $X$ (see Corollary \ref{corolfinite} for such a solution in the setting of $C(K)$ with $K$ scattered). Whether this can be obtained in a more general setting is still unknown to us. The second possible approach is to find $s\in(0,1)$ such that
\begin{equation*} 
\|P\|_v=(1-s^2)\|P\|_s
\end{equation*} 
and try to approximate $P$ by some polynomial $Q$ attaining its $s$-norm. In fact, by \cite[Theorem 3.1]{AAGM}, this is possible when $X$ satisfies the Radon Nikodým property. The outcome from this approach is that it may occur that
\begin{equation*} 
\|Q\|_v>(1-s^2)\|Q\|_s.
\end{equation*} 
Therefore, we decide to use a completely new approach, using the differences between the weighted and supremum norm in our favor.

\begin{figure}
    \centering

\tikzset{every picture/.style={line width=0.75pt}} %set default line width to 0.75pt        

\begin{tikzpicture}[x=0.75pt,y=0.75pt,yscale=-1,xscale=1]
%uncomment if require: \path (0,376); %set diagram left start at 0, and has height of 376

%Shape: Circle [id:dp15489414122650236] 
\draw   (191,201) .. controls (191,123.68) and (253.68,61) .. (331,61) .. controls (408.32,61) and (471,123.68) .. (471,201) .. controls (471,278.32) and (408.32,341) .. (331,341) .. controls (253.68,341) and (191,278.32) .. (191,201) -- cycle ;
%Shape: Circle [id:dp6908540238751202] 
\draw  [fill={rgb, 255:red, 0; green, 0; blue, 0 }  ,fill opacity=1 ] (327.9,200.5) .. controls (327.9,199.06) and (329.06,197.9) .. (330.5,197.9) .. controls (331.94,197.9) and (333.1,199.06) .. (333.1,200.5) .. controls (333.1,201.94) and (331.94,203.1) .. (330.5,203.1) .. controls (329.06,203.1) and (327.9,201.94) .. (327.9,200.5) -- cycle ;
%Shape: Ellipse [id:dp22176382895189928] 
\draw  [color={rgb, 255:red, 0; green, 0; blue, 0 }  ,draw opacity=1 ][dash pattern={on 4.5pt off 4.5pt}][line width=0.75]  (216.44,281.14) .. controls (183.59,233.71) and (208.45,159.6) .. (271.96,115.6) .. controls (335.48,71.6) and (413.6,74.38) .. (446.45,121.81) .. controls (479.31,169.24) and (454.45,243.35) .. (390.93,287.35) .. controls (327.42,331.35) and (249.3,328.57) .. (216.44,281.14) -- cycle ;
%Straight Lines [id:da7740283802623222] 
\draw    (467.56,105.34) -- (190.84,297.06) ;
\draw [shift={(189.2,298.2)}, rotate = 325.28] [color={rgb, 255:red, 0; green, 0; blue, 0 }  ][line width=0.75]    (10.93,-3.29) .. controls (6.95,-1.4) and (3.31,-0.3) .. (0,0) .. controls (3.31,0.3) and (6.95,1.4) .. (10.93,3.29)   ;
\draw [shift={(469.2,104.2)}, rotate = 145.28] [color={rgb, 255:red, 0; green, 0; blue, 0 }  ][line width=0.75]    (10.93,-3.29) .. controls (6.95,-1.4) and (3.31,-0.3) .. (0,0) .. controls (3.31,0.3) and (6.95,1.4) .. (10.93,3.29)   ;
%Shape: Circle [id:dp3301785157936248] 
\draw  [color={rgb, 255:red, 0; green, 0; blue, 0 }  ,draw opacity=1 ][fill={rgb, 255:red, 0; green, 0; blue, 0 }  ,fill opacity=1 ] (421.06,136.02) .. controls (421.06,134.75) and (422.08,133.72) .. (423.35,133.72) .. controls (424.62,133.72) and (425.64,134.75) .. (425.64,136.02) .. controls (425.64,137.28) and (424.62,138.31) .. (423.35,138.31) .. controls (422.08,138.31) and (421.06,137.28) .. (421.06,136.02) -- cycle ;
%Straight Lines [id:da2486991779089227] 
\draw    (331.33,65.47) -- (341.5,79.64) ;
\draw [shift={(342.67,81.27)}, rotate = 234.35] [color={rgb, 255:red, 0; green, 0; blue, 0 }  ][line width=0.75]    (7.65,-2.3) .. controls (4.86,-0.97) and (2.31,-0.21) .. (0,0) .. controls (2.31,0.21) and (4.86,0.98) .. (7.65,2.3)   ;
%Straight Lines [id:da9934380570642094] 
\draw    (455.23,221.71) -- (463.2,232) ;
\draw [shift={(454,220.13)}, rotate = 52.21] [color={rgb, 255:red, 0; green, 0; blue, 0 }  ][line width=0.75]    (7.65,-2.3) .. controls (4.86,-0.97) and (2.31,-0.21) .. (0,0) .. controls (2.31,0.21) and (4.86,0.98) .. (7.65,2.3)   ;
%Straight Lines [id:da8332684266114214] 
\draw    (425.83,267.42) -- (437.67,283.87) ;
\draw [shift={(424.67,265.8)}, rotate = 54.26] [color={rgb, 255:red, 0; green, 0; blue, 0 }  ][line width=0.75]    (7.65,-2.3) .. controls (4.86,-0.97) and (2.31,-0.21) .. (0,0) .. controls (2.31,0.21) and (4.86,0.98) .. (7.65,2.3)   ;
%Straight Lines [id:da176865517847399] 
\draw    (370.17,310.09) -- (382,326.53) ;
\draw [shift={(369,308.47)}, rotate = 54.26] [color={rgb, 255:red, 0; green, 0; blue, 0 }  ][line width=0.75]    (7.65,-2.3) .. controls (4.86,-0.97) and (2.31,-0.21) .. (0,0) .. controls (2.31,0.21) and (4.86,0.98) .. (7.65,2.3)   ;
%Straight Lines [id:da003721306417183712] 
\draw    (303.58,325.33) -- (310.67,335.93) ;
\draw [shift={(302.47,323.67)}, rotate = 56.24] [color={rgb, 255:red, 0; green, 0; blue, 0 }  ][line width=0.75]    (7.65,-2.3) .. controls (4.86,-0.97) and (2.31,-0.21) .. (0,0) .. controls (2.31,0.21) and (4.86,0.98) .. (7.65,2.3)   ;
%Shape: Circle [id:dp054715153292293595] 
\draw  [color={rgb, 255:red, 0; green, 0; blue, 0 }  ,draw opacity=1 ][fill={rgb, 255:red, 0; green, 0; blue, 0 }  ,fill opacity=1 ] (243.59,90.02) .. controls (243.59,88.75) and (244.62,87.72) .. (245.88,87.72) .. controls (247.15,87.72) and (248.17,88.75) .. (248.17,90.02) .. controls (248.17,91.28) and (247.15,92.31) .. (245.88,92.31) .. controls (244.62,92.31) and (243.59,91.28) .. (243.59,90.02) -- cycle ;
%Shape: Circle [id:dp6473908310945256] 
\draw  [color={rgb, 255:red, 0; green, 0; blue, 0 }  ,draw opacity=1 ][fill={rgb, 255:red, 0; green, 0; blue, 0 }  ,fill opacity=1 ] (264.26,119.02) .. controls (264.26,117.75) and (265.28,116.72) .. (266.55,116.72) .. controls (267.82,116.72) and (268.84,117.75) .. (268.84,119.02) .. controls (268.84,120.28) and (267.82,121.31) .. (266.55,121.31) .. controls (265.28,121.31) and (264.26,120.28) .. (264.26,119.02) -- cycle ;
%Straight Lines [id:da1974615104678461] 
\draw    (249.67,95.8) -- (261.5,112.24) ;
\draw [shift={(262.67,113.87)}, rotate = 234.26] [color={rgb, 255:red, 0; green, 0; blue, 0 }  ][line width=0.75]    (7.65,-2.3) .. controls (4.86,-0.97) and (2.31,-0.21) .. (0,0) .. controls (2.31,0.21) and (4.86,0.98) .. (7.65,2.3)   ;
%Straight Lines [id:da3947895864279747] 
\draw    (205.67,149.47) -- (215.78,163.7) ;
\draw [shift={(216.93,165.33)}, rotate = 234.62] [color={rgb, 255:red, 0; green, 0; blue, 0 }  ][line width=0.75]    (7.65,-2.3) .. controls (4.86,-0.97) and (2.31,-0.21) .. (0,0) .. controls (2.31,0.21) and (4.86,0.98) .. (7.65,2.3)   ;

% Text Node
\draw (459,274.6) node [anchor=north west][inner sep=0.75pt]    {$S_{X}$};
% Text Node
\draw (339.1,194.5) node [anchor=north west][inner sep=0.75pt]    {$0$};
% Text Node
\draw (415.73,114.87) node [anchor=north west][inner sep=0.75pt]    {$x$};
% Text Node
\draw (255.73,164.6) node [anchor=north west][inner sep=0.75pt]    {$S( y) =R( z)$};
% Text Node
\draw (227.47,74) node [anchor=north west][inner sep=0.75pt]    {$y$};
% Text Node
\draw (273.96,119) node [anchor=north west][inner sep=0.75pt]    {$z$};

\end{tikzpicture}
    \caption{The linear transformation of $S_X$ onto an ellipsoid aligned with an element $x\in B_X$.}
    \label{elipses0}
\end{figure}
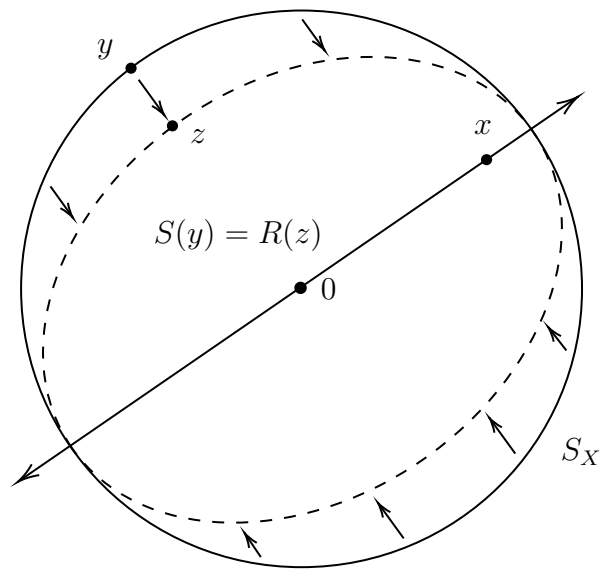

The way we approximate $P$ is by the limit of a sequence of polynomials such that we control the geometric place where these polynomials almost attain the $v$-norm. The mentioned sequence is constructed inductively using the following method (see Lemma \ref{estimate}): given a polynomial $R$, we first find a point $x$ where $R$ almost attain its weighted norm and then approximate $R$ by the polynomial $S$ whose values over the spheres of $X$ are the values of $R$ over ellipsoids aligned with the element $x$ (see Figure \ref{elipses0}). This idea will become clear in the development of the whole proof as the reader can see in Subsection \ref{subsec:main}. 

\subsection{A counterexample} We start by showing that there are Banach spaces $X$ and $Y$ such that the set $\NA_v \mathcal{A}_u(B_X;Y)$ is not $\|\cdot\|_{\infty}$ norm dense in $\mathcal{A}_u (X; Y)$. In fact, the space taken to be the domain space is the predual of a Lorentz sequence space. 

An admissible sequence will mean a decreasing sequence $(w_n)_{n \in \N} \subset \R^+$ such that $w_1=1$ and $\lim w_n=0$. The Lorentz sequence space $d(w,1)$ associated to an admissible sequence $w=(w_n)_{n \in \N}$ will be the space of all bounded sequences $x=(x_n)_{n \in \N} \subseteq \C$ such that
$$\|x\|_{w,1}=\sum \limits_{n\in\N} x^*_nw_n<\infty,$$
where $(x^*_n)_{n \in \N} \subseteq \R$ is the decreasing rearrangement of $(|x_n|)_{n \in \N}$. The space $d(w,1)$ is a reflexive Banach space when endowed with the norm $\|\cdot\|_{w,1}$. We will denote the predual of $d(w,1)$ as $d_*(w,1)$.

We will need the following known result due to D. Carando and M. Mazzitelli. We will rephrase it here for convenience.

\begin{theorem} \cite[Proposition 4.10]{CM} \label{theorem:CM} Let $N \geq 2$ and $w \in \ell_N$ be an admissible sequence. Let $X =  d_{*}(w,1)$ and $Y$ be a strictly convex Banach space. There exists an $N$-homogeneous polynomial $Q \in \mathcal{P}(^NX; Y)$ such that whenever $f \in \mathcal{A}_u(B_X; Y)$ attains its $s$-norm $\|\cdot\|_s$ for some $0 < s \leq 1$, we have \begin{equation*} 
\|Q - f\|_{\infty} \geq (1 - s)^N.
\end{equation*} 
\end{theorem}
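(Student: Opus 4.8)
The plan is to exhibit a single \emph{diagonal} $N$-homogeneous polynomial whose norm is approached only along a weakly null sequence that spreads to infinity, and then to quantify, via a one-variable Cauchy estimate, how far any norm-attaining $f$ must sit from it.

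\textbf{Construction of $Q$.} Let $(e_n)$ be the canonical basis of $X=d_*(w,1)$ and fix $y_0\in S_Y$ together with a norming functional $y_0^*\in S_{Y^*}$ with $y_0^*(y_0)=1$. I would set $Q(x)=\big(\sum_n x_n^N\big)\,y_0$. The hypothesis $w\in\ell_N$ is exactly what guarantees that $x\mapsto\sum_n x_n^N$ is a bounded $N$-homogeneous polynomial on $B_X$; after normalization assume $\|Q\|_\infty=1$. The structural feature to record is that $\|Q\|_\infty$ is \emph{approached but never attained}: there is a normalized block sequence $v_m=c_m\sum_{n\in I_m}e_n$ with $\min I_m\to\infty$ such that $y_0^*(Q(v_m))=c_m^N|I_m|\to 1$, and since $d(w,1)$ (hence its predual $d_*(w,1)$) is reflexive, the basis is shrinking, so $v_m\rightharpoonup 0$. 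Because $Q$ has no cross terms it splits additively on asymptotically disjointly supported vectors.

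\textbf{The extraction mechanism.} Suppose $f\in\mathcal{A}_u(B_X;Y)$ attains its $s$-norm; by Remark \ref{remarkopen} we may take $x_0\in sS_X$ with $\|f(x_0)\|=\|f\|_s$. For each $m$ consider the scalar holomorphic function on the open unit disk
\[
\phi_m(\lambda)=y_0^*\Big(Q\big(x_0+\lambda(1-s)v_m\big)-f\big(x_0+\lambda(1-s)v_m\big)\Big),
\]
which is well defined since $\|x_0+\lambda(1-s)v_m\|\le s+|\lambda|(1-s)<1$ for $|\lambda|<1$, and which satisfies $|\phi_m(\lambda)|\le\|Q-f\|_\infty=:a$. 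Expanding, the diagonal form of $Q$ gives $y_0^*Q(x_0+\lambda(1-s)v_m)=y_0^*Q(x_0)+\lambda^N(1-s)^N\,y_0^*Q(v_m)+o(1)$, the $o(1)$ absorbing the tails of $x_0$ over $I_m$ and the mixed terms, uniformly on compact subdisks. Hence the $N$-th Taylor coefficient of $\phi_m$ is $c_N^{(m)}=(1-s)^N\big(y_0^*Q(v_m)-P_N^{f}(v_m)\big)+o(1)$, where $P_N^f$ denotes the $N$-homogeneous part of the Taylor expansion of $y_0^*\circ f$ at $x_0$. The Cauchy estimate, letting the radius tend to $1$, yields $|c_N^{(m)}|\le a$ for every $m$.

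\textbf{The main obstacle.} It remains to choose the escaping blocks so that $P_N^f(v_m)\to 0$; then $|c_N^{(m)}|\to(1-s)^N$ and therefore $\|Q-f\|_\infty\ge(1-s)^N$. This is the heart of the argument and the step I expect to be genuinely delicate. The difficulty is that $P_N^f$ need not be weakly sequentially continuous — indeed $Q$ itself is not, which is precisely why it fails to attain its norm — so $P_N^f(v_m)$ does not tend to $0$ for free (if $f=Q$ then $P_N^f(v_m)=Q(v_m)\to1$, but such an $f$ is excluded exactly because it is not norm attaining). This is where both hypotheses must be spent: the norm-attainment of $f$ at $x_0$ constrains how the higher homogeneous parts of $f$ can align with $Q$ along the escaping direction, while the strict convexity of $Y$ supplies the rigidity needed to turn that constraint into the quantitative vanishing $P_N^f(v_m)\to0$, i.e. to prevent $f$ from shadowing the $(1-s)^N$ increment that $Q$ accrues along $v_m$. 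Carrying this out — presumably by combining the weak nullity $v_m\rightharpoonup0$ with a careful averaging over the blocks $I_m$ and the extremal structure of $\|f(x_0)\|=\|f\|_s$ — is where the bulk of the work resides; once it is in place, the Cauchy bound closes the argument at once.
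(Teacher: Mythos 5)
First, note that the paper does not prove this statement at all: it is imported verbatim as \cite[Proposition 4.10]{CM}, so there is no internal argument to compare yours with. Judged on its own, your proposal has a genuine gap, and you in fact flag it yourself. The skeleton is right: $Q(x)=\big(\sum_n x_n^N\big)y_0$ is bounded on $B_{d_*(w,1)}$ precisely because $w\in\ell_N$ (via Hardy's inequality), and since $Q$ is diagonal the $N$-th Taylor coefficient of your $\phi_m$ is exactly $(1-s)^N\big(y_0^*Q(v_m)-P_N^f(v_m)\big)$, with no error term needed. But the desired inequality is then equivalent to producing blocks with $Q(v_m)\to1$ and $P_N^f(v_m)\to0$, and that is the entire content of the theorem: as you yourself observe, it fails for $f=Q$, so it cannot follow from weak nullity or averaging over blocks alone. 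Your sketch never actually deploys either hypothesis that must carry this step.

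Concretely, two ingredients are missing. (i) A geometric lemma about $B_{d_*(w,1)}$: at a point $x_0$ with $\|x_0\|\le s$ one can choose far-out, finitely supported directions $z$ of norm comparable to $1-s$ for which the whole analytic disk $\{x_0+\lambda z:|\lambda|\le 1\}$ remains inside $s\overline{B_X}$ --- the ball of the Lorentz predual is ``flat'' in tail directions, and this is a second, essential use of $w\in\ell_N$, which calibrates the admissible size of $z$ and hence the constant $(1-s)^N$. Your disk $x_0+\lambda(1-s)v_m$ leaves $s\overline{B_X}$, so the norm-attainment of $f$ at $x_0$ gives no control over $f$ on it. (ii) The strong maximum modulus principle of Thorp--Whitley: if $Y$ is strictly convex and $g$ is a $Y$-valued holomorphic function on the disk with $\|g(\lambda)\|\le\|g(0)\|$ for all $\lambda$, then $g$ is constant. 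Applied to $\lambda\mapsto f(x_0+\lambda z)$ on the disks from (i), this forces every homogeneous Taylor term of $f$ at $x_0$ of degree $\ge 1$ to vanish identically on those tail directions, so $P_N^f(v_m)=0$ exactly rather than asymptotically, and the Cauchy bound then closes the argument. Without (i) and (ii) the proof does not go through; with them, it is essentially the argument of \cite[Proposition 4.10]{CM}.
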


At this point, it is worth mentioning it seems to be unknown whether a Bishop-Phelps theorem (see \cite{BP} or, for instance, \cite[Theorem 7.41]{FHHMZ}) holds for $\mathcal{A}_u(B_X; \C)$ with the sup-norm. In other words, we do not know whether the set $\NA\mathcal{A}_u(B_X;\C)$ is $\|\cdot\|_{\infty}$ dense in $\mathcal{A}_u(B_X; \C)$ for every Banach space $X$. Nevertheless, the following result follows as a combination of Theorem \ref{theorem:CM} and Theorem \ref{Theorem:equivalence}.(1). 

\begin{corollary} Let $N \geq 2$ and $w \in \ell_N$ be an admissible sequence. Then,
\begin{equation*}
    \overline{\NA\mathcal{P}^N (d_{*}(w,1); \C)}^{\|\cdot\|_{\infty}} \not= \mathcal{P}^N(d_{*}(w,1); \C).
\end{equation*}
\end{corollary}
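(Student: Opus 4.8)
The plan is to reduce the supremum-norm statement to the $s$-norm version of Theorem \ref{theorem:CM} for a suitably chosen fixed $s<1$. This reduction is forced on us because Theorem \ref{theorem:CM} is vacuous for the supremum norm: a polynomial attaining $\|\cdot\|_\infty$ attains its $s$-norm with $s=1$, and the bound $(1-s)^N$ then collapses to $0$, so Theorem \ref{theorem:CM} by itself cannot separate $Q$ from $\NA\mathcal{P}^N$. The device that repairs this is the dilation $g\mapsto g(s\cdot)$, whose invertibility is controlled quantitatively by Theorem \ref{Theorem:equivalence}.(1).

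Concretely, writing $X:=d_*(w,1)$, I would fix $\alpha\in(0,1)$, set $s:=s(\alpha,N)\in(0,1)$ as in \eqref{s-de-alpha-N}, and apply Theorem \ref{theorem:CM} with the strictly convex target $\C$ to obtain an $N$-homogeneous $Q\in\mathcal{P}(^N X;\C)$ such that $\|Q-f\|_\infty\ge (1-s)^N$ for every $f\in\mathcal{A}_u(B_X;\C)$ attaining its $s$-norm. I then claim that the single polynomial $s^N Q\in\mathcal{P}^N(X;\C)$ is bounded away from $\NA\mathcal{P}^N(X;\C)$, which gives the non-density.

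The key step is the passage between sup-norm and $s$-norm attainment. Given $P\in\NA\mathcal{P}^N(X;\C)$, say $\|P\|_\infty=|P(x_0)|$ with $x_0\in\overline{B_X}$, I would set $g:=P(\cdot/s)\in\mathcal{P}^N(X;\C)$; then $\|g\|_s=\|P\|_\infty=|g(sx_0)|$ with $sx_0\in s\overline{B_X}$, so $g$ attains its $s$-norm and Theorem \ref{theorem:CM} yields $\|Q-g\|_\infty\ge (1-s)^N$. Using the $N$-homogeneity of $Q$, one has $s^N Q(x)-P(x)=Q(sx)-g(sx)=(Q-g)(sx)$ for every $x$, whence $\|s^N Q-P\|_\infty=\|Q-g\|_s$. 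Finally Theorem \ref{Theorem:equivalence}.(1), applied to $Q-g\in\mathcal{P}^N(X;\C)$ with this precise $s=s(\alpha,N)$, gives $\|Q-g\|_s\ge (1-\alpha)\|Q-g\|_\infty\ge(1-\alpha)(1-s)^N$. Since $(1-\alpha)(1-s)^N>0$ is independent of $P$, this shows $\operatorname{dist}_{\|\cdot\|_\infty}\!\big(s^N Q,\NA\mathcal{P}^N(X;\C)\big)>0$.

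The obstacle here is conceptual rather than computational: one must recognize that the problem has to be shifted off $s=1$. The underlying structural fact is that $\NA\mathcal{P}^N(X;\C)$ is exactly the image of $\NA_s\mathcal{P}^N(X;\C)$ under the dilation $g\mapsto g(s\cdot)$, which Theorem \ref{Theorem:equivalence}.(1) certifies to be an isomorphism of $(\mathcal{P}^N(X;\C),\|\cdot\|_\infty)$ with lower bound $1-\alpha$; once this is seen, everything else is bookkeeping. The $N$-homogeneity of $Q$ is what makes the transported identity exact; in its absence one would instead argue abstractly that an isomorphism sends the non-dense set $\NA_s\mathcal{P}^N(X;\C)$ (non-dense by Theorem \ref{theorem:CM}) to a non-dense set.
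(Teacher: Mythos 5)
Your argument is correct and realizes exactly the combination the paper asserts: the corollary is stated there without a written proof, only as ``a combination of Theorem~\ref{theorem:CM} and Theorem~\ref{Theorem:equivalence}.(1)'', and your dilation trick $P\mapsto P(\cdot/s(\alpha,N))$ (turning sup-norm attainment into $s$-norm attainment, then transferring the bound $(1-s)^N$ back via $\|\cdot\|_{s(\alpha,N)}\geq(1-\alpha)\|\cdot\|_{\infty}$ and the $N$-homogeneity of $Q$) is the natural, and evidently intended, way to carry that combination out. No gaps.
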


Now we are ready to present our counterexample.

\begin{theorem} \label{counterexample-denseness} Let $N \geq 2$ and $w \in \ell_N$ be an admissible sequence. Let $X = d_{*}(w, 1)$ and $Y$ be a strictly convex Banach space. There exists an $N$-homogeneous polynomial $Q \in \mathcal{P}(^N X;Y)$ such that 
\begin{equation*} 
Q \not\in \overline{\NA_v\mathcal{A}_u (B_X; Y)}^{\|\cdot\|_{\infty}}.
\end{equation*} 
In particular, the set $\NA_v\mathcal{A}_u(B_X; Y)$ is not $\|\cdot\|_{\infty}$-dense in $\mathcal{A}_u(B_X; Y)$.
\end{theorem}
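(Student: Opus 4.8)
The plan is to argue by contradiction, feeding the polynomial $Q$ produced by Theorem \ref{theorem:CM} into the characterization of $v$-norm attainment from Lemma \ref{lemma:carac}. First I would fix $Q \in \mathcal{P}(^N X; Y)$ as in Theorem \ref{theorem:CM}, and observe that $Q \neq 0$: otherwise $f \equiv 0$ attains every $s$-norm, yet $\|Q - 0\|_{\infty} = 0 < (1-s)^N$ for $s \in (0,1)$, contradicting the conclusion of that theorem. In particular $\|Q\|_v > 0$.

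Suppose, towards a contradiction, that $Q \in \overline{\NA_v\mathcal{A}_u (B_X; Y)}^{\|\cdot\|_{\infty}}$, and pick a sequence $(f_n)_n \subseteq \NA_v\mathcal{A}_u(B_X;Y)$ with $\|Q - f_n\|_{\infty} \to 0$. Since each $f_n$ attains its $v$-norm, Lemma \ref{lemma:carac} furnishes $s_n \in (0,1)$ with $\|f_n\|_v = (1-s_n^2)\|f_n\|_{s_n}$ and such that $f_n$ attains its $s_n$-norm. Now Theorem \ref{theorem:CM} applies to each $f_n$, giving $\|Q - f_n\|_{\infty} \geq (1-s_n)^N$. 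Because the left-hand side tends to $0$, we conclude $s_n \to 1$.

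The contradiction then comes from comparing two limits of $\|f_n\|_v$. On the one hand, since $1-s_n^2 \to 0$ and $\|f_n\|_{s_n} \leq \|f_n\|_{\infty} \to \|Q\|_{\infty} < \infty$, we get $\|f_n\|_v = (1-s_n^2)\|f_n\|_{s_n} \to 0$. On the other hand, the $v$-norm is $\|\cdot\|_{\infty}$-continuous because $\|g\|_v \leq \|g\|_{\infty}$ for every $g$, whence $\|f_n\|_v \to \|Q\|_v > 0$. These two statements are incompatible, so no such sequence exists and $Q \notin \overline{\NA_v\mathcal{A}_u(B_X;Y)}^{\|\cdot\|_{\infty}}$; the final ``in particular'' assertion is then immediate.

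The only real content lies in the step forcing $s_n \to 1$: the Carando--Mazzitelli obstruction guarantees that any function attaining an $s$-norm with $s$ bounded away from $1$ stays a definite distance from $Q$, so candidate approximants are compelled to realize their $v$-norm on spheres of radius tending to the boundary --- precisely where (as Remark \ref{remarkopen} already signals) the weight $1-\|x\|^2$ degenerates and collapses the $v$-norm to $0$. Beyond correctly applying Lemma \ref{lemma:carac} to pin down the radius $s_n$ at which $\|f_n\|_v$ is attained, I expect no serious difficulty.
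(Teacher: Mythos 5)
Your argument is correct, and it reaches the contradiction by a genuinely different route than the paper. The paper first proves a quantitative claim --- using the homogeneity of $Q$ (so that $\|Q\|_s = s^N\|Q\|_\infty$) and the fact that $(1-s^2)s^N$ has a unique maximizer at $\sqrt{N/(N+2)}$ --- to show that the radii $s_n$ at which the approximants realize their $v$-norm must eventually lie in a fixed interval $\bigl[\sqrt{N/(N+2)}-\e,\ \sqrt{N/(N+2)}+\e\bigr]$ bounded away from $1$, and only then invokes Theorem \ref{theorem:CM} to obtain the uniform lower bound $\|f_n-Q\|_\infty \geq \bigl(1-\sqrt{N/(N+2)}-\e\bigr)^N>0$. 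You run the implication in the opposite direction: Theorem \ref{theorem:CM} forces $s_n\to 1$, and then the identity $\|f_n\|_v=(1-s_n^2)\|f_n\|_{s_n}$ from Lemma \ref{lemma:carac}, together with the bound $\|g\|_v\leq\|g\|_\infty$ (hence $\|\cdot\|_\infty$-continuity of $\|\cdot\|_v$), collapses $\|Q\|_v$ to $0$, contradicting $Q\neq 0$. Both applications of the cited results are legitimate, and your preliminary observation that $Q\neq 0$ (hence $\|Q\|_v>0$) is needed and correctly justified. Your version is shorter, avoids the $\rho(\e)$ bookkeeping entirely, and uses the homogeneity of $Q$ only through $\|Q\|_v>0$, so it would apply verbatim to any nonzero function satisfying the Carando--Mazzitelli obstruction; the paper's localization of $s_n$ near $\sqrt{N/(N+2)}$ yields somewhat more explicit quantitative information about the distance from $Q$ to $\NA_v\mathcal{A}_u(B_X;Y)$, but that precision is not needed for the stated theorem.
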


\begin{proof} Indeed, let $Q$ be the one given in \cite[Proposition 4.10]{CM}. By contradiction, let us suppose that $Q \in \overline{\NA_v\mathcal{A}_u (X; Y)}^{\|\cdot\|_{\infty}}$. Then, there exists $(f_n)_{n \in \N} \subseteq \NA_v\mathcal{A}_u(X; Y)$ such that $\|f_n - Q\|_{\infty} \rightarrow 0$ as $n \rightarrow \infty$. We have that 
\begin{equation*}
    \|Q\|_v = \sup_{s \in [0,1]} (1 - s^2) s^N \cdot \|Q\|_{\infty} = \|Q\|_{\infty} \cdot \sup_{s \in [0,1]} (1-s^2)s^N.
\end{equation*}

\noindent
{\bf Claim}: For every $\e > 0$, there exists $\rho(\e) > 0$ such that 
\begin{equation} \label{counter1}
    \|Q\|_v - \rho(\e) > \sup_{s \in [0,1] \setminus \left[ \sqrt{\frac{N}{N+2}} - \e, \sqrt{\frac{N}{N+2}} + \e \right]} (1 - s^2) \|Q\|_s. 
\end{equation}
\noindent
\begin{proof}[Proof of the Claim] Recall from the proof of Proposition \ref{Prop:Poly-Hom} that $\sup_{s \in [0,1]} (1-s^2)s^N$ is attained only at $\sqrt{\frac{N}{N+2}}$. Having this in mind, it is clear that, for every $\e>0$, there exists $\tilde{\rho}(\e) > 0$ such that 
\begin{equation} \label{counter2}
    \sup_{s \in [0,1]} (1 - s^2) s^N - \tilde{\rho}(\e) > \sup_{s \in [0,1] \setminus \left[ \sqrt{\frac{N}{N+2}} - \e, \sqrt{\frac{N}{N+2}} + \e \right]} (1 - s^2) s^N.
\end{equation}
Set $\rho(\e):= \tilde{\rho}(\e) \|Q\|_{\infty} > 0$. Then, taking into account that $Q$ is a $N$-homogeneous polynomial,
\begin{eqnarray*}
\|Q\|_v - \rho(\e) &=& \sup_{s \in [0,1]} (1 - s^2) s^N \|Q\|_{\infty} - \tilde{\rho(\e)}\|Q\|_{\infty} \\
&=& \|Q\|_{\infty} \cdot \left[ \sup_{s \in [0,1]} (1 - s^2) s^N - \tilde{\rho}(\e) \right] \\
&\stackrel{(\ref{counter2})}{>}&  \|Q\|_{\infty} \cdot \sup_{s \in [0,1] \setminus \left[ \sqrt{\frac{N}{N+2}} - \e, \sqrt{\frac{N}{N+2}} + \e \right]} (1 - s^2) s^N \\
&=& \sup_{s \in [0,1] \setminus \left[ \sqrt{\frac{N}{N+2}} - \e, \sqrt{\frac{N}{N+2}} + \e \right]} (1 - s^2) \|Q\|_s. 
\end{eqnarray*} 
\end{proof} 

Let us take $\e > 0$ to be such that 
\begin{equation} \label{counter4}
\sqrt{\frac{N}{N+2}} + \e < 1
\end{equation}
There exists $n(\e) \in \N$ such that 
\begin{equation*}
    \|f_n - Q\|_{\infty} < \frac{\rho(\e)}{2}
\end{equation*}
for every $n \geq n(\e)$. We observe that $\|f_n - Q\|_v < \frac{\rho(\e)}{2}$ and $\|f_n - Q\|_s <\frac{\rho(\e)}{2}$ for every $0 < s \leq 1$ for every $n \geq n(\e)$. So,
\begin{equation} \label{counter3}
\|f_n\|_v > \|Q\|_v - \frac{\rho(\e)}{2} \ \ \ \mbox{and} \ \ \ \|f_n\|_s < \|Q\|_s + \frac{\rho(\e)}{2}
\end{equation}
for every $n \geq n(\e)$ and $0 < s \leq 1$. Hence, if $n\ge n(\ep)$ then,
\begin{eqnarray*}
\|f_n\|_v \stackrel{(\ref{counter3})}{>} \|Q\|_v - \frac{\rho(\e)}{2} &=& \left(\|Q\|_v - \rho(\e)\right) + \frac{\rho(\e)}{2} \\
&\stackrel{(\ref{counter1})}{>}& \sup_{s \in [0,1] \setminus \left[ \sqrt{\frac{N}{N+2}} - \e, \sqrt{\frac{N}{N+2}} +\e \right]} (1 - s^2) \|Q\|_s+ \frac{\rho(\e)}{2} \\
&>& \sup_{s \in [0,1] \setminus \left[ \sqrt{\frac{N}{N+2}} - \e, \sqrt{\frac{N}{N+2}} +\e \right]} (1 - s^2) \left( \|Q\|_s + \frac{\rho(\e)}{2} \right) \\
&\stackrel{(\ref{counter3})}{>}& \sup_{s \in [0,1] \setminus \left[ \sqrt{\frac{N}{N+2}} - \e, \sqrt{\frac{N}{N+2}} +\e \right]} (1 - s^2) \|f_n\|_s.
\end{eqnarray*}
Since $f_n \in \NA_vA_u(B_X; Y)$, by Lemma \ref{lemma:carac} there exists $s_n \in (0, 1]$ such that $\|f_n\|_v = (1 - s_n^2) \|f_n\|_{s_n}$ and $f_n$ attains its norm $\|\cdot\|_{s_n}$. This implies, having in mind the previous estimation for $\|f_n\|_v$, that 
\begin{equation*}
    s_n \in \left[ \sqrt{\frac{N}{N+2}} - \e, \sqrt{\frac{N}{N+2}} + \e \right] 
\end{equation*}
for every $n \geq n(\e)$. In particular, for every $n \geq n(\e)$, we have that $s_n \leq \sqrt{\frac{N}{N+2}} + \e$. Since $f_n$ attains its $s_n$-norm for $n \geq n(\e)$, by Theorem \ref{theorem:CM}, we have that 
\begin{equation*}
\|f_n - Q\|_{\infty} \geq (1 - s_n)^N > \left(1 - \sqrt{\frac{N}{N+2}} - \e \right)^N \stackrel{(\ref{counter4})}{>} 0,
\end{equation*}
which yields a contradiction.
\end{proof}

\subsection{The main theorem} \label{subsec:main} We provide now the main result of the paper. We show that a Bollobás-type theorem (see \cite{Bol} for the classical Bollobás theorem) holds for the weighted norm in the context of uniformly convex Banach spaces. More precisely, we have the following result. 

\begin{theorem}\label{mainthdensity} Let $X$ be a uniformly convex Banach space, $Y$ an arbitrary Banach space, and $N\in\N$. Then, for every $\ep>0$, there is $\eta(\ep)>0$ such that whenever $P\in\mathcal{P}^N(X;Y)$ with $\|P\|_v=1$ and $x\in B_X$ satisfy
\begin{equation*} 
(1-\|x\|^2)\|P(x)\|\ge\|P\|_v-\eta(\ep),
\end{equation*} 
there are $Q\in \mathcal{P}^N(X;Y)$ and $y\in B_X$ such that 
\begin{equation*} 
\|Q\|_v = (1-\|y\|^2)\|Q(y)\|, \ \ \ \ \ d\big(y,\text{span}_\C(x)\big)\leq\ep, \ \ \ \ \ \mbox{and} \ \ \ \ \  \|P-Q\|_\infty\leq\ep.
\end{equation*} 
\end{theorem}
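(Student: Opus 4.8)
The plan is to prove the theorem by an inductive perturbation scheme in which, at each step, the current polynomial is replaced by its composition with a linear dilation aligned with the point where the weighted norm is almost attained, followed by a passage to the limit. Normalize $\|P\|_v=1$ and assume first that the almost-maximizer satisfies $x\neq 0$ (the degenerate case $x=0$, where $\operatorname{span}_\C(x)=\{0\}$, is handled by a direct perturbation of $P$ towards its value at the origin and will be treated separately). Put $u=x/\|x\|$ and fix a norming functional $u^*\in S_{X^*}$ with $u^*(u)=1$. For $\rho>1$ consider the dilation
\begin{equation*}
T_\rho(z)=z+(\rho-1)\,u^*(z)\,u\qquad(z\in X),
\end{equation*}
which fixes the hyperplane $\ker u^*$ and multiplies the direction $u$ by $\rho$, so that $T_\rho(S_X)$ is an ellipsoid elongated towards $x$ as in Figure~\ref{elipses0}; its inverse $T_\rho^{-1}(w)=w-(1-\rho^{-1})u^*(w)\,u$ contracts the $u$-direction. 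Given a polynomial $R$ I set $S=R\circ T_\rho$.

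Three facts together make up the key estimate (Lemma~\ref{estimate}). First, a \emph{gain}: writing $x=tu$ with $t=\|x\|$, the point $z_0=(t/\rho)u\in B_X$ satisfies $T_\rho(z_0)=x$, so
\begin{equation*}
\|S\|_v\ \ge\ \bigl(1-(t/\rho)^2\bigr)\,\|R(x)\|\ >\ (1-t^2)\,\|R(x)\|\ \ge\ 1-\eta,
\end{equation*}
and since $\|R(x)\|\approx(1-t^2)^{-1}$ the excess $\|S\|_v-1$ is bounded below by a positive quantity of order $t^2(1-\rho^{-2})/(1-t^2)$. Second, a \emph{perturbation bound}: on $\overline{B_X}$ one has $\|T_\rho z-z\|\le(\rho-1)$, and since $R$ is a degree-$N$ polynomial it is Lipschitz on $\rho\,\overline{B_X}$ with a constant controlled by $\|R\|_\infty$ and $N$, whence $\|S-R\|_\infty\le C(N,\|R\|_\infty)(\rho-1)$; by Theorem~\ref{Theorem:equivalence} this is equally a bound in $\|\cdot\|_v$. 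Third, \emph{alignment via uniform convexity}: writing $\|S\|_v=\sup_w(1-\|T_\rho^{-1}w\|^2)\|R(w)\|$ over $w\in T_\rho(B_X)$, the contraction $T_\rho^{-1}$ assigns strictly more weight to vectors pointing along $u$, and the modulus of convexity $\delta_X$ forces any $w$ making a definite angle with $\C u$ to incur a quantified loss of weight. Since $T_\rho$ preserves the line $\C u$, every near-maximizer of $S$ is thereby pulled towards $\operatorname{span}_\C(x)$, by an amount controlled by $\delta_X$ and $\rho$. This is exactly the mechanism pictured in Figure~\ref{elipses0}: small dilations produce small perturbations while buying a definite gain and a definite improvement in alignment.

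With this estimate in hand I would iterate. Starting from $P_0=P$, $x_0=x$, choose dilation factors $\rho_n\downarrow 1$ decreasing geometrically fast enough that $\sum_n\|P_{n+1}-P_n\|_\infty<\varepsilon$, set $P_{n+1}=P_n\circ T_{\rho_n}$ with $T_{\rho_n}$ aligned with the current almost-maximizer $x_n$, and let $x_{n+1}$ be an almost-maximizer of $P_{n+1}$ with ever smaller defect. The perturbation bound makes $(P_n)_n$ Cauchy in $\|\cdot\|_\infty$, so $Q:=\lim_n P_n\in\mathcal P^N(X;Y)$ with $\|P-Q\|_\infty\le\varepsilon$; summing the alignment estimate keeps every $x_n$, and hence the limit $y:=\lim_n x_n$, within distance $\varepsilon$ of $\operatorname{span}_\C(x)$. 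Finally, because the defects tend to $0$, uniform convexity forces the near-maximizers to concentrate, so $y$ is a genuine maximizer and $\|Q\|_v=(1-\|y\|^2)\|Q(y)\|$. The threshold $\eta(\varepsilon)$ is then recovered by tracing these quantitative constraints backwards through the scheme.

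The main obstacle is the simultaneous bookkeeping of three quantities that pull against one another: each $\rho_n$ must be close to $1$ to keep the perturbations summable (so that $Q$ stays $\varepsilon$-close to $P$), yet large enough at each step to extract a definite gain and angular improvement; and the alignment direction $u_n=x_n/\|x_n\|$ itself drifts from step to step, so the total drift must be shown summable and bounded by $\varepsilon$. The crux is the uniform-convexity step that trades a lower bound on the weighted excess $\|S\|_v-1$ for an upper bound on the angle between near-maximizers and $\operatorname{span}_\C(x)$; making this trade quantitatively uniform over all degree-$N$ polynomials and over the radius $t=\|x\|$ (which may approach $0$ or $1$) is where the real work lies, since in the infinite-dimensional setting the convergence of the maximizers cannot come from compactness and must be produced entirely by convexity.
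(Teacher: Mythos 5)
Your overall architecture --- compose iteratively with a linear map aligned with the current near-maximizer, keep the perturbations summable so that the polynomials are Cauchy in $\|\cdot\|_\infty$, and force the near-maximizers to converge by geometry rather than compactness --- is exactly the strategy of the paper. But the step you yourself flag as ``where the real work lies'' is the entire content of the proof, and your specific choice of operator makes it doubtful. The paper does not use a dilation $T_\rho(z)=z+(\rho-1)u^*(z)u$ fixing the hyperplane $\ker u^*$; it uses the norm-one projection $P_x(y)=u^*(y)\,x/\|x\|$ onto the \emph{line} and the averaged map $T_{\rho,x}=(1-\rho)\mathrm{Id}+\rho P_x$, which maps $B_X$ into $B_X$ and, crucially, exhibits $T_{\rho,x}(y)$ as the midpoint of $y$ and $w=2\rho P_x(y)+(1-2\rho)y$, two points of norm at most $\|y\|$ at distance $2\rho\|y-P_x(y)\|$. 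Uniform convexity then gives directly $\|T_{\rho,x}(y)\|\le\|y\|\bigl(1-\delta(2\rho\|y-P_x(y)\|)\bigr)$: a quantified \emph{norm drop}, hence weight gain, precisely for $y$ far from $\operatorname{span}_\C(x)$ (Lemma~\ref{Tprop}, then Lemma~\ref{estimate}). With your $T_\rho$ the analogous inequality would be that the weight gain $\|w\|^2-\|T_\rho^{-1}w\|^2$ is essentially maximal \emph{only} for $w$ near $\C u$; writing $w=u^*(w)u+h$ with $h\in\ker u^*$, one has $T_\rho^{-1}w=\rho^{-1}u^*(w)u+h$, and outside Hilbert space it is not even clear that $\|T_\rho^{-1}w\|\le\|w\|$ (the projection $w\mapsto w-u^*(w)u$ onto $\ker u^*$ has norm up to $2$), let alone that the gain detects the distance to the line rather than merely the size of $|u^*(w)|$. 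The modulus of convexity controls midpoints of pairs in $B_X$; it does not control $t\mapsto\|w+tu\|$, which is what your ellipsoid picture needs. So the ``alignment via uniform convexity'' step, as set up, is not a routine verification but a claim that plausibly fails for general uniformly convex $X$, and no proof is offered.

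A second, smaller but genuine gap: you write ``let $y:=\lim_n x_n$,'' but the alignment estimates only give $d(x_{n+1},\operatorname{span}_\C(x_n))\le\rho_n$, which does not make $(x_n)$ Cauchy --- the points may oscillate in phase and modulus along the (slowly varying) lines $\C x_n$. The paper needs a separate argument (Lemma~\ref{convlemma}): one first shows the unit spheres $S_{\C x_n}$ form a Cauchy sequence of circles, extracts a convergent sequence of unit directions, and only then uses compactness of $\overline{\D}$ for the scalar coefficients to get a convergent \emph{subsequence} $x_{\sigma(n)}\to y$; the vanishing defects then show $Q$ attains its $v$-norm at $y$, and one final application of the key lemma with $R=P$, $S=Q$ places $y$ in the $\e$-tube around $\operatorname{span}_\C(x)$. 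Your appeal to ``uniform convexity forces the near-maximizers to concentrate'' does not substitute for this, since near-maximizers of a fixed function need not cluster in infinite dimensions. Finally, the degenerate case $x=0$ (where $\operatorname{span}_\C(x)=\{0\}$ and your $u=x/\|x\|$ is undefined) is deferred but never treated; in the paper it is absorbed seamlessly because $P_0=0$ is allowed and $T_{\rho,0}=(1-\rho)\mathrm{Id}$ still satisfies the required norm-drop estimate.
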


As an immediate consequence of Theorem \ref{mainthdensity}, we have the following. 

\begin{corollary} \label{corollary:mainthdensity}
Let $X$ be a uniformly convex Banach space and $Y$ an arbitrary Banach space. Then, every element in $\mathcal{A}_u(B_X; Y)$ can be approximated with the $\|\cdot\|_{\infty}$ norm by elements in $\mathcal{A}_u(B_X;Y)$ which attain their $v$-norm. In other words, the following equality holds true
\begin{equation*} 
\overline{\NA_v\mathcal{A}_u(B_X;Y)}^{\|\cdot\|_\infty}=\mathcal{A}_u(B_X;Y).
\end{equation*} 
\end{corollary}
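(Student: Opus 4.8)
The plan is to deduce the corollary from Theorem \ref{mainthdensity} by a two-step $\|\cdot\|_\infty$-approximation: first approximate an arbitrary $f\in\mathcal{A}_u(B_X;Y)$ by a \emph{finite-degree} polynomial $P$, and then invoke the polynomial Bollobás-type result to replace $P$ by a polynomial $Q$ that attains its $v$-norm. Fix $f\in\mathcal{A}_u(B_X;Y)$ and $\ep>0$. First I would produce $P\in\mathcal{P}^N(X;Y)$ (for some $N$ depending on $f$ and $\ep$) with $\|f-P\|_\infty<\ep/2$, and then $Q\in\mathcal{P}^N(X;Y)$ attaining its $v$-norm with $\|P-Q\|_\infty<\ep/2$. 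Since a continuous polynomial of finite degree has bounded derivative on $\overline{B_X}$, it is Lipschitz there and hence belongs to $\mathcal{A}_u(B_X;Y)$; thus $Q\in\NA_v\mathcal{A}_u(B_X;Y)$ and $\|f-Q\|_\infty<\ep$, which is exactly the asserted density $\overline{\NA_v\mathcal{A}_u(B_X;Y)}^{\|\cdot\|_\infty}=\mathcal{A}_u(B_X;Y)$.

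For the first step I would use dilations. Because $f$ is uniformly continuous on $B_X$, the dilates $f_r(x):=f(rx)$ converge to $f$ uniformly on $B_X$ as $r\to1^-$: indeed $\|rx-x\|=(1-r)\|x\|\le 1-r$, so uniform continuity forces $\sup_{x\in B_X}\|f(rx)-f(x)\|\to0$. I would fix $r<1$ with $\|f_r-f\|_\infty<\ep/4$. The point of passing to $f_r$ is that $f_r$ is holomorphic on the strictly larger ball $\tfrac1r B_X\supset\overline{B_X}$, so by the Cauchy estimates (see \cite{Mujica}) its Taylor polynomials at the origin converge to it uniformly on $\overline{B_X}$: choosing $\rho\in(1,1/r)$, the $n$-homogeneous Taylor term $P_n$ of $f_r$ satisfies $\|P_n\|_\infty\le\rho^{-n}\sup_{\rho\overline{B_X}}\|f_r\|$, whence $\sum_n\|P_n\|_\infty<\infty$ and a partial sum $P:=\sum_{n=0}^N P_n$ can be chosen with $\|f_r-P\|_\infty<\ep/4$. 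This yields $P\in\mathcal{P}^N(X;Y)$ with $\|f-P\|_\infty<\ep/2$.

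For the second step I may assume $P\neq0$ (if $P=0$ it already attains its $v$-norm at the origin). Normalizing, I set $\lambda:=\|P\|_v>0$ and $\tilde P:=P/\lambda$, so $\|\tilde P\|_v=1$. Let $\eta=\eta(\ep')>0$ be the constant from Theorem \ref{mainthdensity} associated to the fixed degree $N$ and to $\ep':=\ep/(2\lambda)$. Since $\|\tilde P\|_v=\sup_{x\in B_X}(1-\|x\|^2)\|\tilde P(x)\|$, there is $x\in B_X$ with $(1-\|x\|^2)\|\tilde P(x)\|\ge\|\tilde P\|_v-\eta$, so $\tilde P$ and $x$ satisfy the hypotheses of Theorem \ref{mainthdensity}, which supplies $\tilde Q\in\mathcal{P}^N(X;Y)$ attaining its $v$-norm with $\|\tilde P-\tilde Q\|_\infty\le\ep'$. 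Setting $Q:=\lambda\tilde Q$, scaling preserves $v$-norm attainment (both $\|Q\|_v$ and $(1-\|\cdot\|^2)\|Q(\cdot)\|$ are multiplied by $\lambda$, so the maximizer is unchanged), and $\|P-Q\|_\infty=\lambda\|\tilde P-\tilde Q\|_\infty\le\lambda\ep'=\ep/2$, completing the chain of estimates.

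I expect the one genuinely delicate point — the passage from polynomials to the full algebra — to be the first step rather than the application of Theorem \ref{mainthdensity}. The subtlety is that the raw Taylor series of $f$ need not converge on $\overline{B_X}$, so one cannot simply truncate $f$; passing to a dilate $f_r$ (legitimate because $f$ is uniformly continuous, hence extends continuously to $\overline{B_X}$) is what makes the Cauchy estimates applicable and, crucially, guarantees an approximant of \emph{finite} degree, which is precisely what the degree-fixed hypothesis of Theorem \ref{mainthdensity} requires. Once finite degree is secured, the remainder is the standard normalization-and-combination bookkeeping carried out above.
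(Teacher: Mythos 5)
Your proof is correct and takes essentially the same route as the paper: the authors state the corollary as an immediate consequence of Theorem \ref{mainthdensity}, implicitly relying on precisely the two ingredients you spell out, namely the $\|\cdot\|_\infty$-density of finite-degree polynomials in $\mathcal{A}_u(B_X;Y)$ (via dilation $f_r$ and truncation of the Taylor series, justified by the Cauchy estimates) combined with an application of the Bollobás-type theorem after normalizing so that $\|P\|_v=1$. Your write-up simply makes explicit the standard approximation and rescaling bookkeeping that the paper leaves to the reader.
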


\vspace{0.4cm} 

Our aim from now on is to give the proof of Theorem \ref{mainthdensity}. In order to do so, we split the proof into several results, which might have their own interest.  

Taking into account (\ref{s-de-alpha-N}), let us set, for every $N\in\N$, the quantity
\begin{equation} \label{M-N}
M_N :=8(1-s(1/2,N)^2)^{-1}\sum\limits_{n=1}^N\frac{2^{2n-1}n^n}{n!}.
\end{equation} 
We start with a simple consequence of Theorem \ref{Theorem:equivalence}.

\begin{proposition}\label{lippol}
Let $X,Y$ be Banach spaces and $N\in\N$. For every $P\in \mathcal{P}^N(X;Y)$ we have
$$\|P\|_{\text{Lip}(B_X)}\leq\|P\|_vM_N/4.$$
\end{proposition}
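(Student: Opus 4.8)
The plan is to reduce the Lipschitz estimate to the homogeneous components of $P$ and then turn the resulting $\|\cdot\|_\infty$-bound into a $\|\cdot\|_v$-bound through Theorem \ref{Theorem:equivalence}. Write $P=\sum_{n=0}^N P_n$ with $P_n\in\mathcal{P}(^nX;Y)$, and let $A_n\in\mathcal{L}(^nX;Y)$ be the symmetric $n$-linear map representing $P_n$, so that $P_n(x)=A_n(x,\dots,x)$. The constant term $P_0$ is irrelevant for the Lipschitz constant, and by subadditivity of the Lipschitz seminorm it suffices to bound each $\|P_n\|_{\text{Lip}(B_X)}$ for $n\ge 1$ and add up.

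For the component estimate I would use the telescoping identity
\[
P_n(x)-P_n(y)=\sum_{j=1}^{n}A_n\big(\underbrace{x,\dots,x}_{j-1},\,x-y,\,\underbrace{y,\dots,y}_{\,n-j}\big)\qquad(x,y\in B_X),
\]
which, since $\|x\|,\|y\|<1$, yields $\|P_n(x)-P_n(y)\|\le n\|A_n\|\,\|x-y\|$, hence $\|P_n\|_{\text{Lip}(B_X)}\le n\|A_n\|$. Next I would invoke the classical polarization inequality $\|A_n\|\le\frac{n^n}{n!}\|P_n\|_\infty$ (see \cite{HJ}) together with the Cauchy estimate $\|P_n\|_\infty\le\|P\|_\infty$, the latter following from the vector-valued formula $P_n(x)=\frac{1}{2\pi}\int_0^{2\pi}P(e^{i\theta}x)e^{-in\theta}\,d\theta$ and the fact that $e^{i\theta}x\in\overline{B_X}$ whenever $x\in\overline{B_X}$. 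Summing over $n$ gives
\[
\|P\|_{\text{Lip}(B_X)}\le\sum_{n=1}^{N}n\,\frac{n^n}{n!}\,\|P\|_\infty .
\]

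Finally I would pass to the weighted norm. Applying Theorem \ref{Theorem:equivalence}.(2) with $\alpha=\tfrac12$ gives $\|P\|_v\ge\tfrac12\big(1-s(1/2,N)^2\big)\|P\|_\infty$, that is, $\|P\|_\infty\le 2\big(1-s(1/2,N)^2\big)^{-1}\|P\|_v$. Substituting this and using the elementary bound $n\le 2^{2n-1}$ (valid for every $n\ge1$) to absorb the factor $n$ into the summand, I obtain
\[
\|P\|_{\text{Lip}(B_X)}\le\frac{2}{1-s(1/2,N)^2}\sum_{n=1}^{N}\frac{2^{2n-1}n^n}{n!}\,\|P\|_v=\frac{M_N}{4}\,\|P\|_v,
\]
which is exactly the claim. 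The only point that genuinely needs care is that the Lipschitz constant must be controlled uniformly up to $S_X$: this is possible precisely because the polar forms $A_n$ are bounded, whereas the derivative of a general bounded holomorphic function may blow up near the sphere. Matching the explicit constant $M_N$ is then the routine inequality $n\le 2^{2n-1}$, whose large slack simply reflects that $M_N$ was defined generously so as to be reusable in the subsequent estimates.
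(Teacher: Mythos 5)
Your proof is correct and follows essentially the same route as the paper: decompose $P$ into homogeneous components, bound each $\|P_n\|_{\text{Lip}(B_X)}$ by a constant multiple of $\|P\|_\infty$, and convert to the $v$-norm via Theorem \ref{Theorem:equivalence} with $\alpha=1/2$, landing exactly on $M_N/4$. The only (harmless) difference is bookkeeping: the paper quotes \cite[Chapter 1, Proposition 4 and Lemma 47]{HJ} to reach the per-component constant $2^{2n-1}n^n/n!$, whereas you derive $n\cdot n^n/n!$ from the telescoping/polarization argument plus the Cauchy estimate $\|P_n\|_\infty\leq\|P\|_\infty$ and then absorb the factor $n$ using $n\leq 2^{2n-1}$.
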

\begin{proof} Let $P=\sum\limits_{n=0}^NP_n$ where $P_n\in\mathcal{P}(^nX;Y)$.
By Proposition 4 and Lemma 47 of \cite[Chapter 1]{HJ}, we have that
$$\|P\|_{\text{Lip}(B_X)}\leq\sum\limits_{n=1}^N\|P_n\|_{\text{Lip}(B_X)}\leq\sum\limits_{n=1}^N2^{2n-1}\|P_n\|_\infty\leq\|P\|_\infty\sum\limits_{n=1}^N\frac{2^{2n-1}n^n}{n!}.$$
Now, from  Theorem \ref{Theorem:equivalence} with $\alpha=1/2$ we conclude that
$$\|P\|_{\text{Lip}(B_X)}\leq\|P\|_v \cdot \left( 2(1-s(1/2,N)^2)^{-1}\sum\limits_{n=1}^N\frac{2^{2n-1}n^n}{n!} \right)$$
as desired.
\end{proof}

\begin{lemma}\label{convlemma}
Let $X$ be a Banach space, $\ep>0$, and $(\rho_n)_{n \in \N} \subset \R^+$ be an absolutely summable sequence. If for every $n\in \N$ we have that
\begin{equation}\label{hypoR}d(x_{n+1},\text{span}_\C(x_n))\leq\rho_n,\end{equation}
then there is $x\in \overline{B_X}$ and a subsequence $(x_{\sigma(n)})_{n \in \N}$ such that $\|x_{\sigma(n)}-x\|\to0$.
\end{lemma}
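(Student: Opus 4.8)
The plan is to exploit the fact that, since $\text{span}_\C(x_n)$ is at most one–dimensional, hypothesis \eqref{hypoR} says that each $x_{n+1}$ is a near scalar multiple of $x_n$; hence the sequence is ``almost radial'' and the only genuine degree of freedom left is a scalar in $\C$, where bounded sets are compact. Throughout I use that the points lie in $\overline{B_X}$, i.e. $\|x_n\|\le 1$ (this is what makes the conclusion $x\in\overline{B_X}$ meaningful). First I would dispose of a degenerate case. Set $\delta:=\liminf_n\|x_n\|$. If $\delta=0$, choose a subsequence with $\|x_{\sigma(n)}\|\to 0$; then $x_{\sigma(n)}\to 0=:x\in\overline{B_X}$ and we are done.

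So assume $\delta>0$ and fix $N_1\in\N$ with $\|x_n\|\ge\delta/2$ for all $n\ge N_1$. For each such $n$ the subspace $\text{span}_\C(x_n)$ is finite–dimensional, hence proximinal, so I may pick $\lambda_n\in\C$ with $e_n:=x_{n+1}-\lambda_n x_n$ satisfying $\|e_n\|\le\rho_n$. Since $|\lambda_n|\,\|x_n\|\ge\|x_{n+1}\|-\rho_n\ge\delta/2-\rho_n$, after enlarging $N_1$ I may also assume $\lambda_n\ne 0$ for every $n\ge N_1$. The heart of the argument is then to factor out the accumulated scalar: define $\gamma_{N_1}:=1$ and $\gamma_n:=\prod_{j=N_1}^{n-1}\lambda_j$ for $n>N_1$, and set $z_n:=x_n/\gamma_n$. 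A direct computation using $\gamma_{n+1}=\gamma_n\lambda_n$ and $x_{n+1}=\lambda_n x_n+e_n$ gives $z_{n+1}-z_n=e_n/\gamma_{n+1}$.

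Next I would show that $(|\gamma_n|)_n$ is bounded above and below by strictly positive constants. Writing $\log|\gamma_n|=\sum_{j=N_1}^{n-1}\big(\log\|x_{j+1}\|-\log\|x_j\|\big)+\sum_{j=N_1}^{n-1}\big(\log\|x_{j+1}-e_j\|-\log\|x_{j+1}\|\big)$ (the identity holds because $x_{j+1}-e_j=\lambda_j x_j$), the first sum telescopes to $\log\|x_n\|-\log\|x_{N_1}\|$, which is bounded since $\delta/2\le\|x_n\|\le 1$, while each summand of the second sum is $O(\rho_j/\delta)$ (from $\|x_{j+1}\|\ge\delta/2$ and $\|e_j\|\le\rho_j$), so that series converges absolutely because $\sum_j\rho_j<\infty$. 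Hence $c_0\le|\gamma_n|\le C_0$ for suitable $0<c_0\le C_0$, and therefore $\|z_{n+1}-z_n\|=\|e_n\|/|\gamma_{n+1}|\le\rho_n/c_0$; summability of $(\rho_n)_n$ makes $(z_n)_n$ Cauchy, so $z_n\to z$ for some $z\in X$. To conclude I invoke compactness in $\C$: since $(\gamma_n)_n$ lies in the compact annulus $\{c_0\le|w|\le C_0\}$, I pass to a further subsequence with $\gamma_{\sigma(n)}\to\gamma_*$, whence $x_{\sigma(n)}=\gamma_{\sigma(n)}z_{\sigma(n)}\to\gamma_* z=:x$ with $\|x\|=\lim_n\|x_{\sigma(n)}\|\le 1$, i.e. $x\in\overline{B_X}$. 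The main obstacle — and the reason one cannot simply normalize the $x_n$ and appeal to compactness of $S_X$ — is that $X$ is infinite–dimensional, so the unit ball is not compact and the scalars $\lambda_n$ may drift in both modulus and phase; dividing by the partial products $\gamma_n$ confines all of this non-compactness into the single complex parameter $\gamma_n$, and the one quantitative estimate that really matters is the two–sided bound $c_0\le|\gamma_n|\le C_0$, which is exactly where $\delta>0$ and $\sum_n\rho_n<\infty$ are used.
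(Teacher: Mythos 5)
Your proof is correct, and it reaches the conclusion by a genuinely different decomposition than the paper's. The paper also begins by disposing of the case of a subsequence tending to $0$ and then assumes $\|x_n\|\ge r>0$; but instead of factoring out the accumulated scalar product, it works directly with the lines $Y_n=\text{span}_\C(x_n)$: it shows that the unit spheres $S_{Y_n}$ and $S_{Y_{n+1}}$ are within $2\rho_n/r$ of one another (normalize $x_{n+1}$, approximate it in $Y_n$, renormalize, then rotate by a unimodular scalar), builds from this a Cauchy sequence of unit vectors $k_n\in S_{Y_n}$ with $\|k_{n+1}-k_n\|\le 2\rho_n/r$ converging to some $k\in S_X$, writes $x_n=\lambda_n k_n$ with $\lambda_n\in\overline{\D}$, and finishes by extracting a convergent subsequence of the scalars $\lambda_n$. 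Your version replaces the choice of normalized representatives by the explicit telescoping product $\gamma_n=\prod_j\lambda_j$ and the two-sided logarithmic bound $c_0\le|\gamma_n|\le C_0$; this is more computational and requires the extra care you correctly took (ensuring $\lambda_n\ne 0$ and taking logarithms only from a large enough index on), whereas the paper's argument is purely metric and avoids products and logarithms altogether, at the price of the small geometric lemma about nearby unit circles in nearby complex lines. Both proofs isolate the non-compactness into a single bounded complex parameter, and both rely on the same hypothesis left implicit in the statement, namely that $(x_n)_n\subseteq\overline{B_X}$: the paper needs it to get $\lambda_n\in\overline{\D}$, and you need it both for the upper bound on $|\gamma_n|$ and for $\|x\|\le 1$.
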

\begin{proof}
If there is a subsequence of $(x_n)$ converging to 0 we are done. Otherwise, we may assume that there is $r>0$ such that $\|x_n\|\ge r$ for every $n\in\N$. If we denote $Y_n=\text{span}_\C(x_n)$, we claim that
\begin{equation}\label{claimR}\sup\limits_{x\in S_{Y_{n}}}d(x,S_{Y_{n+1}})\leq\frac{2\rho_n}{r}\;\;\;\;\forall n\in\N.\end{equation}
Indeed, from \eqref{hypoR} we know that there is $k\in Y_n$ such that
\begin{equation*} 
\left\|\frac{x_{n+1}}{\|x_{n+1}\|}-k\right\|\leq \frac{\rho_n}{\|x_{n+1}\|}\leq\frac{\rho_n}{r}.
\end{equation*} 
Hence, $|1-\|k\||\leq\rho_n/r$ and so
\begin{eqnarray*} 
d(S_{Y_{n+1}},S_{Y_n}) \leq \left\|\frac{x_{n+1}}{\|x_{n+1}\|}-\frac{k}{\|k\|}\right\| &\leq& \left\|\frac{x_{n+1}}{\|x_{n+1}\|}-k \right\|+ \left\|k-\frac{k}{\|k\|}\right\|\\ \\
&\leq& \frac{\rho_n}{r} + |1-\|k\|| \\
&\leq& \frac{2\rho_n}{r}.
\end{eqnarray*}

Therefore, there is $\widetilde k_{n+1}\in S_{Y_{n+1}}$ and $\widetilde k_n\in S_{Y_n}$ such that $\|\widetilde k_n-\widetilde k_{n+1}\|\leq\frac{2\rho_n}{r}$. Let us take an arbitrary $x\in S_{Y_n}$. Then, there is $\lambda\in\T$ such that $x=\lambda \widetilde k_{n}$. Hence, we finish the proof of \eqref{claimR} since
$$d(x,S_{Y_{n+1}})\leq\|x-\lambda \widetilde k_{n+1}\|=|\lambda|\|\widetilde k_n-\widetilde k_{n+1}\|\leq\frac{2\rho_n}{r}.$$
Now that \eqref{claimR} has been proven, let us tackle the proof of the lemma. We take here $k_1=\frac{x_1}{\|x_1\|}$ and inductively choose $k_{n+1}$ for every $n\in\N$ as the element of $S_{Y_{n+1}}$ satisfying that $\|k_{n+1}-k_n\|\leq\frac{2\rho_n}{r}$. Clearly, for every $n,m\in\N$, we have that 
$$\|k_{n+m}-k_n\|\leq\frac{2}{r}\sum\limits_{i\ge n}\rho_i.$$
Hence, the sequence $(k_n)_{n \in \N}$ is a Cauchy sequence defined on the sphere of the complete space $X$. Therefore, there is $k\in S_X$ such that $\|k_n-k\|\to0$. Finally, for every $n\in\N$ there must be $\lambda_n\in\overline{\D}$ such that $x_n=\lambda_n k_n$. Since $\overline{\D}$ is compact there is $\lambda\in \overline{\D}$ and $\sigma:\N\to\N$ strictly increasing such that $|\lambda_{\sigma(n)}-\lambda|\to0$. Thus, we are done taking $x=\lambda k$ because
$$\|x_{\sigma(n)}-x\|=\|\lambda_{\sigma(n)}k_{\sigma(n)}-\lambda k\|\leq|\lambda_{\sigma(n)}-\lambda|+\|k_{\sigma(n)}-k\|\to0.$$
\end{proof}

Let us recall that the {\it modulus of convexity}  of a Banach space $(X,\|\cdot\|)$ is the function $\delta:[0,2]\to[0,1]$ given by
\begin{equation*} 
\delta(t):=\inf\Big\{1-\Big\|\frac{x+y}{2}\Big\|\;:\;x,y\in B_X,\;\|x-y\|\ge t\Big\}.
\end{equation*} 

We will also need the following straightforward result.

\begin{fact}
Let $X$ be a Banach space.
Then for every $x\in X$ there is a norm one projection $P_x$ from $X$ onto $\text{span}_\C(x)$.
\end{fact}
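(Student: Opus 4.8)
The plan is to write down the projection explicitly using a Hahn--Banach support functional. First I would dispose of the trivial case $x=0$, where $\text{span}_\C(x)=\{0\}$ and the zero operator is the (only) projection; so from now on I assume $x\neq 0$. By the Hahn--Banach theorem there is a norm-one functional $x^*\in X^*$ with $x^*(x)=\|x\|$. The candidate projection is then
\begin{equation*}
P_x(z):=\frac{x^*(z)}{\|x\|}\,x\qquad(z\in X).
\end{equation*}

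Next I would check the three required properties. Linearity is immediate, and $P_x(z)\in\text{span}_\C(x)$ for every $z$, while $P_x(x)=\frac{x^*(x)}{\|x\|}x=x$; hence the range of $P_x$ is exactly $\text{span}_\C(x)$ and $P_x$ fixes it pointwise, which already shows $P_x$ is idempotent. (Directly, $P_x(P_x z)=\frac{x^*(x)}{\|x\|}\cdot\frac{x^*(z)}{\|x\|}x=P_x z$ because $x^*(x)=\|x\|$.) For the norm, I would estimate
\begin{equation*}
\|P_x(z)\|=\frac{|x^*(z)|}{\|x\|}\,\|x\|=|x^*(z)|\le\|x^*\|\,\|z\|=\|z\|,
\end{equation*}
so $\|P_x\|\le 1$, and since $P_x(x)=x$ we get $\|P_x\|\ge\|x\|/\|x\|=1$; thus $\|P_x\|=1$.

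There is essentially no obstacle here: the whole statement is a one-line application of the existence of support functionals, and the only point requiring a word of care is that \emph{norm one} needs both inequalities, the upper bound coming from $\|x^*\|=1$ and the lower bound from the fact that $P_x$ acts as the identity on its range. Since we work over $\C$ throughout, the functional $x^*$ is complex-linear and so $P_x$ is automatically a complex-linear projection onto the complex line $\text{span}_\C(x)$, with no adjustment needed for the scalar field.
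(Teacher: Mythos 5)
Your proof is correct and is exactly the standard Hahn--Banach support-functional construction that the paper has in mind; the paper itself states this as a ``straightforward result'' and gives no proof, so there is nothing to diverge from. The only pedantic caveat is that for $x=0$ the unique projection onto $\{0\}$ is the zero operator, which has norm $0$ rather than $1$, but this degenerate case is harmless in the paper's application (where only $\|P_x\|\le 1$ is ever used).
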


For readability, we fix some notation we will be using for the rest of the proof. 

\noindent
\textbf{Notation}: Given some $x\in X$ and $\rho\in[0,1]$, we will denote
\begin{equation} \label{distance-mu}
[x]_\rho :=\{y\in X\;:\;d(y,\text{span}_\C(x))\leq\rho\}\;\;\text{ and }\;\;\mu(\rho):=\frac{\rho^2\delta(2\rho^2)^2}{16}.
\end{equation}
We will also need to consider the linear operator $T_{\rho,x}:X\to X$ given by
\begin{equation} \label{T-pho-x}
T_{\rho,x}(y)=(1-\rho)y+\rho P_x(y)\;\;\;\;(y\in X).
\end{equation} 

\vspace{0.5cm}

Let us notice that whenever $X$ is a uniformly convex Banach space, the function $\mu$ defined above satisfies the following easy-to-check properties:
\begin{itemize}
\item $\mu$ is strictly increasing.
\item $\mu(\rho)=0$ if and only if $\rho=0$.
\item $\mu(\rho)<\rho$ for every $\rho\in(0,1]$.
\end{itemize}

Now we move one step further in the proof of our main result. 

\begin{lemma}\label{Tprop}
Let $X$ be a uniformly convex Banach space, $Y$ an arbitrary Banach space, and $N\in\N$. Given $x\in B_X$, $0<\rho<1$, and $P\in\mathcal{P}^N(X,Y)$, the next properties are satisfied:
\begin{enumerate}
\item $\|P-P\circ T_{\rho,x}\|_\infty\leq\rho\|P\|_vM_N/2.$\label{T1}
\item $\|T_{\rho,x}(y)\|\leq\|y\|-\|y\|\delta(2\rho\|y-P_x(y)\|)$ for every $y\in B_X$.\label{T2}
\end{enumerate}
\end{lemma}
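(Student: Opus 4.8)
The plan is to establish the two items separately: item (1) is a Lipschitz estimate built on Proposition~\ref{lippol}, while item (2) is where the uniform convexity of $X$ is used. For item (1), I would first record that $T_{\rho,x}$ is norm--nonincreasing: since $\|P_x\|=1$, for every $y\in\overline{B_X}$ the definition \eqref{T-pho-x} gives
\[
\|T_{\rho,x}(y)\|\le(1-\rho)\|y\|+\rho\|P_x(y)\|\le\|y\|\le1,
\]
so $T_{\rho,x}$ maps $B_X$ into $B_X$. Next, again from \eqref{T-pho-x}, one has $y-T_{\rho,x}(y)=\rho\,(y-P_x(y))$, hence
\[
\|y-T_{\rho,x}(y)\|=\rho\,\|y-P_x(y)\|\le\rho\big(\|y\|+\|P_x(y)\|\big)\le2\rho\|y\|\le2\rho.
\]
Feeding this into the Lipschitz bound $\|P\|_{\text{Lip}(B_X)}\le\|P\|_vM_N/4$ of Proposition~\ref{lippol} yields, for every $y\in B_X$,
\[
\|P(y)-P(T_{\rho,x}(y))\|\le\|P\|_{\text{Lip}(B_X)}\,\|y-T_{\rho,x}(y)\|\le\frac{\|P\|_vM_N}{4}\cdot2\rho=\frac{\rho\|P\|_vM_N}{2},
\]
and taking the supremum over $y\in B_X$ gives item (1). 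The only thing to verify is that both $y$ and $T_{\rho,x}(y)$ belong to $B_X$, which is exactly the contractivity recorded above.

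For item (2) the idea is to write the normalized image of $y$ under $T_{\rho,x}$ as a genuine midpoint of two points of $\overline{B_X}$ and then invoke the definition of the modulus of convexity $\delta$. Assuming $y\neq0$ (the case $y=0$ being trivial), set $r=\|y\|$, $\hat y=y/r$, and $w:=(1-2\rho)\hat y+2\rho P_x(\hat y)$. Because $P_x(\hat y)=P_x(y)/r$ has norm at most one and, for $\rho\le\tfrac12$, $w$ is a convex combination of $\hat y$ and $P_x(\hat y)$, we get $w\in\overline{B_X}$. A direct computation from \eqref{T-pho-x} then shows
\[
\tfrac12(\hat y+w)=(1-\rho)\hat y+\rho P_x(\hat y)=\tfrac1r\,T_{\rho,x}(y),\qquad\|\hat y-w\|=2\rho\,\|\hat y-P_x(\hat y)\|=\frac{2\rho\|y-P_x(y)\|}{r}.
\]
Applying the definition of $\delta$ to the pair $\hat y,w\in\overline{B_X}$ and then multiplying through by $r=\|y\|$ gives
\[
\|T_{\rho,x}(y)\|=r\,\Big\|\tfrac12(\hat y+w)\Big\|\le\|y\|\Big(1-\delta\Big(\tfrac{2\rho\|y-P_x(y)\|}{\|y\|}\Big)\Big).
\]
Finally, since $\|y\|\le1$ and $\delta$ is nondecreasing, $\delta\big(2\rho\|y-P_x(y)\|/\|y\|\big)\ge\delta\big(2\rho\|y-P_x(y)\|\big)$, which turns the last display into the claimed inequality $\|T_{\rho,x}(y)\|\le\|y\|-\|y\|\,\delta\big(2\rho\|y-P_x(y)\|\big)$.

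I expect the delicate point to be the midpoint construction in item (2): one must produce two points of the ball whose midpoint is $T_{\rho,x}(y)/\|y\|$ and whose separation is as large as $2\rho\|\hat y-P_x(\hat y)\|$, since it is this chord length that yields the sharp $\delta(2\rho\|y-P_x(y)\|)$ gain after rescaling. The convex-combination choice of $w$ does this transparently for $\rho\le\tfrac12$, which is the regime needed for the inductive scheme of Subsection~\ref{subsec:main}, where the perturbation parameters are summable and hence may be taken small; the remaining reduction, replacing the argument $2\rho\|y-P_x(y)\|/\|y\|$ by $2\rho\|y-P_x(y)\|$, is the cosmetic monotonicity step using $\|y\|\le1$.
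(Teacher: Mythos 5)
Your proof is correct and follows essentially the same route as the paper's: for item (1), the Lipschitz bound of Proposition~\ref{lippol} combined with $\|y-T_{\rho,x}(y)\|=\rho\|y-P_x(y)\|\le 2\rho$; for item (2), the midpoint representation $T_{\rho,x}(y)=\tfrac12(y+w)$ with $w=(1-2\rho)y+2\rho P_x(y)$ fed into the modulus of convexity, followed by the monotonicity of $\delta$ and the rescaling by $\|y\|\le 1$. The only divergence is your restriction to $\rho\le\tfrac12$ so that $w$ is a genuine convex combination and hence $\|w\|\le\|y\|$; the paper asserts this inequality as ``clear'' for all $\rho\in(0,1)$, which it is not once $1-2\rho<0$, so your caution is the safe reading and costs nothing, since in Subsection~\ref{subsec:main} the lemma is only ever invoked with $\rho<\tfrac{1}{16M_N}$.
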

\begin{proof}
Let us prove \eqref{T1}. Given $y\in B_X$, by using Proposition \ref{lippol}, we have that
\begin{eqnarray*} 
\|P(y)-P\circ T_{\rho,x}(y)\| &=& \|P(y)-P\big((1-\rho)y+\rho P_x(y)\big)\| \\
&\leq& \|P\|_{\text{Lip}(B_X)}\rho\|y-P_x(y)\| \\
&\leq& \rho\|P\|_vM_N/2
\end{eqnarray*} 
Now, let us tackle the proof of \eqref{T2}. It is clear that $\|2\rho P_x(y)+(1-2\rho )y\|\leq \|y\|$. If we denote $w=2\rho P_x(y)+(1-2\rho )y$ and $z=T_{\rho,x}(y)$ then we have that $z=\frac{y+w}{2}$. Hence, we deduce (\eqref{T2}) since
\begin{eqnarray*} 
\left\| \frac{z}{\|y\|} \right\| = \left\|\frac{y/\|y\|+w/\|y\|}{2}\right\| &\leq& 1-\delta\left(\frac{\|y-w\|}{\|y\|}\right) \\
&\leq& 1-\delta\big(\|y-w\|\big)\\
&=& 1-\delta\big(2\rho\|y-P_x(y)\|\big).
\end{eqnarray*}
\end{proof}

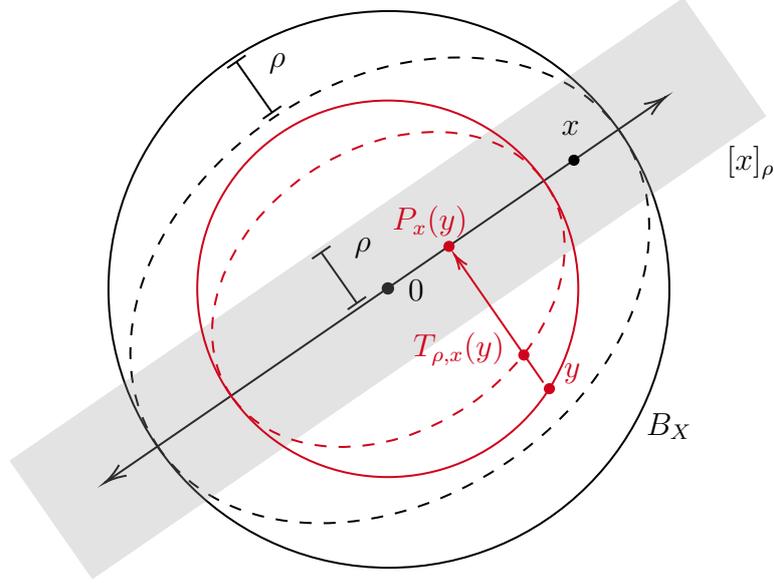
\begin{figure}
\centering

\tikzset{every picture/.style={line width=0.75pt}} %set default line width to 0.75pt        

\begin{tikzpicture}[x=0.75pt,y=0.75pt,yscale=-1,xscale=1]
%uncomment if require: \path (0,376); %set diagram left start at 0, and has height of 376

%Shape: Circle [id:dp15489414122650236] 
\draw   (191,201) .. controls (191,123.68) and (253.68,61) .. (331,61) .. controls (408.32,61) and (471,123.68) .. (471,201) .. controls (471,278.32) and (408.32,341) .. (331,341) .. controls (253.68,341) and (191,278.32) .. (191,201) -- cycle ;
%Shape: Circle [id:dp6908540238751202] 
\draw  [fill={rgb, 255:red, 0; green, 0; blue, 0 }  ,fill opacity=1 ] (327.9,200.5) .. controls (327.9,199.06) and (329.06,197.9) .. (330.5,197.9) .. controls (331.94,197.9) and (333.1,199.06) .. (333.1,200.5) .. controls (333.1,201.94) and (331.94,203.1) .. (330.5,203.1) .. controls (329.06,203.1) and (327.9,201.94) .. (327.9,200.5) -- cycle ;
%Shape: Ellipse [id:dp22176382895189928] 
\draw  [color={rgb, 255:red, 0; green, 0; blue, 0 }  ,draw opacity=1 ][dash pattern={on 4.5pt off 4.5pt}][line width=0.75]  (216.44,281.14) .. controls (183.59,233.71) and (208.45,159.6) .. (271.96,115.6) .. controls (335.48,71.6) and (413.6,74.38) .. (446.45,121.81) .. controls (479.31,169.24) and (454.45,243.35) .. (390.93,287.35) .. controls (327.42,331.35) and (249.3,328.57) .. (216.44,281.14) -- cycle ;
%Straight Lines [id:da5512570282444222] 
\draw    (254.9,86.5) -- (272.68,112) ;
\draw [shift={(272.68,112)}, rotate = 235.11] [color={rgb, 255:red, 0; green, 0; blue, 0 }  ][line width=0.75]    (0,5.59) -- (0,-5.59)   ;
\draw [shift={(254.9,86.5)}, rotate = 235.11] [color={rgb, 255:red, 0; green, 0; blue, 0 }  ][line width=0.75]    (0,5.59) -- (0,-5.59)   ;
%Straight Lines [id:da7740283802623222] 
\draw    (467.56,105.34) -- (190.84,297.06) ;
\draw [shift={(189.2,298.2)}, rotate = 325.28] [color={rgb, 255:red, 0; green, 0; blue, 0 }  ][line width=0.75]    (10.93,-3.29) .. controls (6.95,-1.4) and (3.31,-0.3) .. (0,0) .. controls (3.31,0.3) and (6.95,1.4) .. (10.93,3.29)   ;
\draw [shift={(469.2,104.2)}, rotate = 145.28] [color={rgb, 255:red, 0; green, 0; blue, 0 }  ][line width=0.75]    (10.93,-3.29) .. controls (6.95,-1.4) and (3.31,-0.3) .. (0,0) .. controls (3.31,0.3) and (6.95,1.4) .. (10.93,3.29)   ;
%Straight Lines [id:da2075910856974823] 
\draw    (297.22,182.4) -- (315,207.9) ;
\draw [shift={(315,207.9)}, rotate = 235.11] [color={rgb, 255:red, 0; green, 0; blue, 0 }  ][line width=0.75]    (0,5.59) -- (0,-5.59)   ;
\draw [shift={(297.22,182.4)}, rotate = 235.11] [color={rgb, 255:red, 0; green, 0; blue, 0 }  ][line width=0.75]    (0,5.59) -- (0,-5.59)   ;
%Shape: Rectangle [id:dp9895170871727783] 
\draw  [draw opacity=0][fill={rgb, 255:red, 155; green, 155; blue, 155 }  ,fill opacity=0.28 ] (141.61,287.15) -- (478.07,54.18) -- (519.39,113.85) -- (182.93,346.82) -- cycle ;
%Shape: Circle [id:dp8400978031453705] 
\draw  [color={rgb, 255:red, 208; green, 2; blue, 27 }  ,draw opacity=1 ][fill={rgb, 255:red, 208; green, 2; blue, 27 }  ,fill opacity=1 ] (408.82,250.94) .. controls (408.82,249.68) and (409.84,248.65) .. (411.11,248.65) .. controls (412.37,248.65) and (413.4,249.68) .. (413.4,250.94) .. controls (413.4,252.21) and (412.37,253.23) .. (411.11,253.23) .. controls (409.84,253.23) and (408.82,252.21) .. (408.82,250.94) -- cycle ;
%Straight Lines [id:da9867111815706973] 
\draw [color={rgb, 255:red, 208; green, 2; blue, 27 }  ,draw opacity=1 ]   (408.07,247.87) -- (364.87,185.18) ;
\draw [shift={(363.73,183.53)}, rotate = 55.43] [color={rgb, 255:red, 208; green, 2; blue, 27 }  ,draw opacity=1 ][line width=0.75]    (7.65,-2.3) .. controls (4.86,-0.97) and (2.31,-0.21) .. (0,0) .. controls (2.31,0.21) and (4.86,0.98) .. (7.65,2.3)   ;
%Shape: Ellipse [id:dp12133505690405322] 
\draw  [color={rgb, 255:red, 208; green, 2; blue, 27 }  ,draw opacity=1 ] (235.33,200.7) .. controls (235.33,148.4) and (277.91,106) .. (330.43,106) .. controls (382.96,106) and (425.53,148.4) .. (425.53,200.7) .. controls (425.53,253) and (382.96,295.4) .. (330.43,295.4) .. controls (277.91,295.4) and (235.33,253) .. (235.33,200.7) -- cycle ;
%Shape: Ellipse [id:dp6453547553540142] 
\draw  [color={rgb, 255:red, 208; green, 2; blue, 27 }  ,draw opacity=1 ][dash pattern={on 4.5pt off 4.5pt}][line width=0.75]  (252.61,254.9) .. controls (230.3,222.82) and (247.19,172.7) .. (290.34,142.94) .. controls (333.49,113.18) and (386.56,115.07) .. (408.87,147.14) .. controls (431.18,179.22) and (414.29,229.35) .. (371.14,259.1) .. controls (327.98,288.86) and (274.92,286.98) .. (252.61,254.9) -- cycle ;
%Shape: Circle [id:dp6166351727709047] 
\draw  [color={rgb, 255:red, 208; green, 2; blue, 27 }  ,draw opacity=1 ][fill={rgb, 255:red, 208; green, 2; blue, 27 }  ,fill opacity=1 ] (358.72,179.35) .. controls (358.72,178.08) and (359.75,177.06) .. (361.02,177.06) .. controls (362.28,177.06) and (363.31,178.08) .. (363.31,179.35) .. controls (363.31,180.62) and (362.28,181.64) .. (361.02,181.64) .. controls (359.75,181.64) and (358.72,180.62) .. (358.72,179.35) -- cycle ;
%Shape: Circle [id:dp8420477994087923] 
\draw  [color={rgb, 255:red, 208; green, 2; blue, 27 }  ,draw opacity=1 ][fill={rgb, 255:red, 208; green, 2; blue, 27 }  ,fill opacity=1 ] (396.06,234.02) .. controls (396.06,232.75) and (397.08,231.72) .. (398.35,231.72) .. controls (399.62,231.72) and (400.64,232.75) .. (400.64,234.02) .. controls (400.64,235.28) and (399.62,236.31) .. (398.35,236.31) .. controls (397.08,236.31) and (396.06,235.28) .. (396.06,234.02) -- cycle ;
%Shape: Circle [id:dp3301785157936248] 
\draw  [color={rgb, 255:red, 0; green, 0; blue, 0 }  ,draw opacity=1 ][fill={rgb, 255:red, 0; green, 0; blue, 0 }  ,fill opacity=1 ] (421.06,136.02) .. controls (421.06,134.75) and (422.08,133.72) .. (423.35,133.72) .. controls (424.62,133.72) and (425.64,134.75) .. (425.64,136.02) .. controls (425.64,137.28) and (424.62,138.31) .. (423.35,138.31) .. controls (422.08,138.31) and (421.06,137.28) .. (421.06,136.02) -- cycle ;

% Text Node
\draw (269.76,80.48) node [anchor=north west][inner sep=0.75pt]    {$\rho $};
% Text Node
\draw (458,261.6) node [anchor=north west][inner sep=0.75pt]    {$B_{X}$};
% Text Node
\draw (312.43,173.7) node [anchor=north west][inner sep=0.75pt]    {$\rho $};
% Text Node
\draw (416.87,236.97) node [anchor=north west][inner sep=0.75pt]  [color={rgb, 255:red, 208; green, 2; blue, 27 }  ,opacity=1 ]  {$y$};
% Text Node
\draw (341.7,221.3) node [anchor=north west][inner sep=0.75pt]  [color={rgb, 255:red, 208; green, 2; blue, 27 }  ,opacity=1 ]  {$T_{\rho ,x}( y)$};
% Text Node
\draw (331.37,157.3) node [anchor=north west][inner sep=0.75pt]  [color={rgb, 255:red, 208; green, 2; blue, 27 }  ,opacity=1 ]  {$P_{x}( y)$};
% Text Node
\draw (339.1,194.5) node [anchor=north west][inner sep=0.75pt]    {$0$};
% Text Node
\draw (415.73,114.87) node [anchor=north west][inner sep=0.75pt]    {$x$};
% Text Node
\draw (498,127.4) node [anchor=north west][inner sep=0.75pt]    {$[ x]_{\rho }$};

\end{tikzpicture}
\caption{Action of $P_x$ and $T_{\rho,x}$ over an element $y\in B_X\setminus[x]_\rho$ for $\rho>0$ and $x\in B_X$.}\label{elipses}
\end{figure}

The next result contains most of the information of the method used to prove Theorem \ref{mainthdensity}. We invite the reader to check Figure \ref{elipses} for a conceptual picture with the different elements of the next Lemma \ref{estimate}.

\begin{lemma}\label{estimate}
Let $X$ be a uniformly convex Banach space, $Y$ an arbitrary Banach space, $N\in\N$, and $0<\rho<\frac{1}{16M_N}$. If $R\in\mathcal{P}^N(X;Y)$ with $1/2\leq\|R\|_v\leq2$ and $x\in B_X$  are such that
\begin{equation*} 
(1-\|x\|^2)\|R(x)\|\geq\|R\|_v-\mu(\rho),
\end{equation*} 
then, for every $S\in\mathcal{P}^N(X;Y)$ satisfying $\|S-R\circ T_{\rho,x}\|_\infty\leq\mu(\rho)$, we have that 
\begin{equation*} 
\sup\limits_{y\in B_X\setminus [x]_\rho}(1-\|y\|^2)\|S(y)\|<\|S\|_v-\mu(\rho).
\end{equation*}
\end{lemma}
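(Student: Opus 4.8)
The plan is to exploit the one geometric feature of $T_{\rho,x}$ that matters: it fixes the line $\text{span}_\C(x)$ but strictly contracts every point lying at distance more than $\rho$ from that line, and this contraction, read through the weight $1-\|\cdot\|^2$, produces a genuine gain on the complement of $[x]_\rho$. Accordingly I would prove the statement by combining a \emph{lower} bound for $\|S\|_v$ coming from the point $x$ (which is left fixed by $T_{\rho,x}$) with an \emph{upper} bound for the supremum over $y\in B_X\setminus[x]_\rho$ coming from the contraction. Throughout I abbreviate $\beta:=\delta(2\rho^2)$, which is strictly positive because $X$ is uniformly convex and $\rho>0$, and I recall that $\mu(\rho)=\rho^2\beta^2/16$.

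First I would establish the lower bound. Since $P_x$ is a projection onto $\text{span}_\C(x)$ we have $P_x(x)=x$, whence $T_{\rho,x}(x)=(1-\rho)x+\rho x=x$ and therefore $R\circ T_{\rho,x}(x)=R(x)$. The hypothesis $\|S-R\circ T_{\rho,x}\|_\infty\le\mu(\rho)$ then gives $\|S(x)\|\ge\|R(x)\|-\mu(\rho)$, so that, using $(1-\|x\|^2)\le1$ and the near-maximality of $x$,
\[
\|S\|_v\ge(1-\|x\|^2)\|S(x)\|\ge(1-\|x\|^2)\|R(x)\|-\mu(\rho)\ge\|R\|_v-2\mu(\rho).
\]

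For the upper bound, fix $y\in B_X\setminus[x]_\rho$ and set $z:=T_{\rho,x}(y)$. Because $P_x(y)\in\text{span}_\C(x)$ we have $\|y-P_x(y)\|\ge d(y,\text{span}_\C(x))>\rho$, so item (2) of Lemma~\ref{Tprop} together with the monotonicity of $\delta$ yields $\|z\|\le\|y\|(1-\beta)$. As $z\in B_X$, the definition of the $v$-norm gives $\|R(z)\|\le\|R\|_v/(1-\|z\|^2)$, whence
\[
(1-\|y\|^2)\|R(z)\|\le\frac{1-\|y\|^2}{1-\|y\|^2(1-\beta)^2}\,\|R\|_v\le(1-\|y\|^2\beta)\|R\|_v,
\]
the last step being the elementary estimate $\frac{1-t}{1-t(1-\beta)^2}\le1-t\beta$ valid for $t\in[0,1)$. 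Adding the perturbation error $\mu(\rho)$ and using that $y\notin[x]_\rho$ forces $\|y\|\ge d(y,\text{span}_\C(x))>\rho$ together with $\|R\|_v\ge1/2$, I obtain for every such $y$ that
\[
(1-\|y\|^2)\|S(y)\|\le(1-\|y\|^2)\|R(z)\|+\mu(\rho)<\|R\|_v-\tfrac12\rho^2\beta+\mu(\rho).
\]

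It then remains to combine the two estimates. Taking the supremum over $y\in B_X\setminus[x]_\rho$ and inserting $\|R\|_v\le\|S\|_v+2\mu(\rho)$ from the lower bound reduces the whole statement to the numerical inequality $4\mu(\rho)<\tfrac12\rho^2\beta$, which, after substituting $\mu(\rho)=\rho^2\beta^2/16$, is simply $\beta<2$ and hence holds since $\beta\le1$. The step I expect to be the genuine obstacle is the upper bound: the gain $\|y\|^2\beta\|R\|_v$ produced by the contraction degenerates as $\|y\|\to0$, so any naive estimate fails near the origin. The two devices that rescue it are exactly that restricting to the complement of $[x]_\rho$ forces $\|y\|>\rho$ (so the gain is at least $\tfrac12\rho^2\beta$), and that the definition of $\mu$ carries the \emph{square} $\delta(2\rho^2)^2$, precisely so the quadratically small error terms are dominated by the linear-in-$\beta$ gain.
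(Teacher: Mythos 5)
Your proof is correct, and in the decisive step it takes a genuinely different route from the paper. Both arguments share the same skeleton: $T_{\rho,x}$ fixes $x$, giving $\|S\|_v\ge\|R\|_v-2\mu(\rho)$, and contracts every $y\notin[x]_\rho$ by Lemma \ref{Tprop}(2). The difference is how the contraction is converted into a quantitative drop of $(1-\|y\|^2)\|S(y)\|$. The paper squares $\|y\|\ge\|T_{\rho,x}(y)\|+\rho\delta(2\rho^2)$ to get an \emph{additive} gain $\rho^2\delta(2\rho^2)^2$ in the weight, which only becomes a gain in the product after it is multiplied by $\|R(T_{\rho,x}(y))\|$; to bound that factor below by $1/4$ the paper must invoke the Lipschitz estimate of Proposition \ref{lippol} via Lemma \ref{Tprop}(1), and this is precisely where the hypotheses $\rho<\frac{1}{16M_N}$ and $\|R\|_v\le 2$ are consumed. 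You instead estimate the \emph{ratio} of weights, $\frac{1-\|y\|^2}{1-\|y\|^2(1-\beta)^2}\le 1-\|y\|^2\beta$ (an inequality I checked: it reduces to $t\beta(1-\beta)(1+t(1-\beta))\ge0$), so the gain $\|y\|^2\beta\|R\|_v>\frac12\rho^2\beta$ is proportional to $\|R\|_v$ itself and only the hypothesis $\|R\|_v\ge1/2$ is needed; the conditions $\rho<\frac{1}{16M_N}$ and $\|R\|_v\le2$, and the whole Lipschitz machinery, become superfluous. Your final numerical check $4\mu(\rho)<\frac12\rho^2\beta$, i.e.\ $\beta<2$, is valid since $0<\beta=\delta(2\rho^2)\le1$ by uniform convexity, and the strictness bookkeeping (strict for each $y$ because $\|y\|>\rho$, strict after taking the supremum because the final numerical inequality is strict) is handled correctly. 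Your version is thus slightly stronger and more economical; the paper's version buys nothing extra here, since Lemma \ref{estimate} is only ever applied under the stated hypotheses in the proof of Theorem \ref{mainthdensity}. Do make explicit that you use the monotonicity of the modulus of convexity $\delta$ to pass from $\delta(2\rho\|y-P_x(y)\|)$ to $\delta(2\rho^2)$ -- the paper relies on the same standard fact at the corresponding point.
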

As the reader may have realized, Lemma \ref{estimate} bounds the region where $P\circ T_{\rho,x}$ almost attains the $v$-norm. 
\begin{proof}[Proof of Lemma \ref{estimate}]
Let us argue by contradiction assuming that there is $y\in B_X\setminus[x]_\rho$ such that
\begin{equation}\label{contra}(1-\|y\|^2)\|S(y)\|\ge\|S\|_v-\mu(\rho).\end{equation}
Since $y\notin[x]_\rho$ we have that $\|y-P_x(y)\|>\rho$ and $\|y\|>\rho$. Thus, from property \eqref{T2} of Lemma \ref{Tprop} we deduce that
\begin{equation}\label{normT2}\|y\|\ge\|T_{\rho,x}(y)\|+\rho\delta(2\rho^2).\end{equation}
Now, from the fact that $T_{\rho,x}(x)=x$ and $\|S-R\circ T_{\rho,x}\|\leq\mu(\rho)$ it follows that
$$\|S\|_v\ge(1-\|x\|^2)\|S(x)\|\ge(1-\|x\|^2)\big(\|R\circ T_{\rho,x}(x)\|-\mu(\rho)\big)>\|R\|_v-2\mu(\rho).$$
Equivalently, we have that $\|R\|_v-4\mu(\rho)<\|S\|_v-2\mu(\rho)$. Hence, using our initial assumption \eqref{contra} we get that
$$\|R\|_v-4\mu(\rho)<(1-\|y\|^2)\|S(y)\|-\mu(\rho)\leq(1-\|y\|^2)\|R\circ T_{\rho,x}(y)\|.$$
Now, from \eqref{normT2} we obtain that
$$\begin{aligned}\|R\|_v-4\mu(\rho)<&\big(1-\|T_{\rho,x}(y)\|^2-\rho^2\delta(2\rho^2)^2\big)\|R(T_{\rho,x}(y))\|\\\leq&\|R\|_v-\rho^2\delta(2\rho^2)^2\|R(T_{\rho,x}(y))\|.\end{aligned}$$
Equivalently,
\begin{equation}\label{eqcontra}\frac{\rho^2\delta(2\rho^2)^2\|R(T_{\rho,x}(y))\|}{4}<\mu(\rho).\end{equation}
Using Property \eqref{T1} of Lemma \ref{Tprop} we also obtain
\begin{equation}\label{mid}\|R-S\|_v\leq\|R-S\|_\infty\leq \|R-R\circ T_{\rho,x}\|_\infty+\|R\circ T_{\rho,x}-S\|_\infty\leq\rho M_N+\mu(\rho).\end{equation}
It is straightforward to check that if $\rho<\frac{1}{16M_N}$ then $1/2-3\mu(\rho)-\rho M_N\ge1/4$. Hence, using \eqref{contra} and \eqref{mid} we have that
$$\begin{aligned}\|R(T_{\rho,x}(y))\|\ge&\|S(y)\|-\mu(\rho)\ge\frac{\|S\|_v-\mu(\rho)}{1-\|y\|^2}-\mu(\rho)>\|S\|_v-2\mu(\rho)\\\ge&\|R\|_v-3\mu(\rho)-\rho M_N\ge 1/2-3\mu(\rho)-\rho M_N\ge1/4.\end{aligned}$$
Therefore, by this last inequality and \eqref{eqcontra} we get that
\begin{equation*} 
\frac{\rho^2\delta(2\rho^2)^2}{16}<\mu(\rho),
\end{equation*} 
which is a contradiction.
\end{proof}

We are finally ready to provide the proof of Theorem \ref{mainthdensity}. We invite the reader to check the notations (\ref{M-N}), (\ref{distance-mu}), and (\ref{T-pho-x}) which were all defined throughout this section.

\begin{proof}[Proof of Theorem \ref{mainthdensity}] Let $X$ be a uniformly convex Banach space. Take an arbitrary Banach space $Y$ and fix $N \in \N$. We may assume without loss of generality that $\ep<1/16$ and take $\eta(\ep):=\mu\big(\frac{\ep}{2M_N}\big)$. Let us define $(\rho_n)_{n \in \N} \subseteq \R^+$ as follows. We set $\rho_1 :=\frac{\ep}{2M_N}$ and, for every $n\ge2$, we take the element $\rho_n :=\frac{\mu(\rho_1)}{{2^n}{M_N}}\in\R^+$. Therefore, the following holds,
\begin{equation}\label{ineqsums}
    M_N\sum\limits_{n\ge1}\rho_n\leq \ep \ \ \ \mbox{and} \ \ \ M_N\sum\limits_{n\ge2}\rho_n\leq\mu(\rho_1).
\end{equation}
We start now with the construction of $Q$. Let us define inductively a sequence of polynomials $(P_n)_{n \in \N}$ and a sequence of points $(x_n)_{n \in \N}$. We consider $P_1:=P \in \mathcal{P}(^N X; Y)$ and $x_1:=x\in B_X$ so that by hypothesis we have 
\begin{equation*} 
(1-\|x_1\|^2)\|P_1(x_1)\|>\|P_1\|_v-\mu(\rho_1). 
\end{equation*} 
Now, if $P_n$ and $x_n$ have been defined, we take $P_{n+1} :=P_n\circ T_{\rho_n,x_n}$ and $x_{n+1}\in B_X$ to be such that 
\begin{equation*} 
(1-\|x_{n+1}\|^2)\|P_{n+1}(x_{n+1})\|>\|P_{n+1}\|_v-\mu(\rho_{n+1}).
\end{equation*} 
Since $M_N\sum\limits_{n=1}^\infty\rho_n\leq\ep$, we get that
\begin{equation}\label{ineqbas}\|P_n-P\|_\infty\leq\sum\limits_{i=1}^{n-1}\|P_i-P_{i+1}\|_{\infty}\leq\sum\limits_{i=1}^{n-1}M_N\rho_i=M_N\sum\limits_{n=1}^\infty\rho_n\leq\ep.\end{equation}
Therefore, since $\ep<1/16$ we have that
\begin{equation}\label{Pbound}\begin{aligned}\|P_n\|_v&\leq\|P\|_v+\|P_n-P\|_v\leq\|P\|_v+\|P_n-P\|_\infty<1+1/16<2,\\
\|P_n\|_v&\ge\|P\|_v-\|P_n-P\|_v\ge\|P\|_v-\|P_n-P\|_\infty>1-1/16>1/2.\end{aligned}\end{equation}
Since $\sum\limits_{n=1}^\infty\rho_n<\infty$, it is straightforward to see that $(P_n)_{n \in \N}$ is a Cauchy sequence. In fact, if $m,n\in\N$, then by property \eqref{T1} of Lemma \ref{Tprop} and \eqref{Pbound}, we have that 
\begin{eqnarray*}
\|P_n-P_{n+m}\|_\infty &\leq& 
\sum\limits_{i=n}^{n+m-1}\|P_i-P_{i+1}\|_\infty \\
&=& \sum\limits_{i=n}^{n+m-1}\|P_i-P_{i}\circ T_{\rho_i,x_i}\|_\infty \\
&\leq& \sum\limits_{i\ge n}\rho_iM_N.
\end{eqnarray*} 
Therefore, by completeness, there is $Q\in \mathcal{P}^N(X;\C)$ such that $\|Q-P_n\|_\infty\to0$.

\vspace{0.2cm}

Now, from \eqref{ineqbas}, it follows that $\|Q-P\|_\infty=\lim_n\|P_n-P\|_\infty\leq\ep$. It only remains to prove that there is $y\in B_X\cap[x]_\ep$ such that $(1-\|y\|^2)|Q(y)|=\|Q\|_v$. For that purpose, we are going to prove that a subsequence of $(x_n)_{n \in \N}$ converges (in norm) to some $y\in B_X$. Indeed, let us fix $n\in\N$ and check that the conditions of Lemma \ref{estimate} are satisfied with $R=P_n$, $S=P_{n+1}$, $x=x_n$, and $\rho=\rho_n$:

Clearly, $0<\rho_n<\frac{1}{16M_N}$. By \eqref{Pbound}, we also know that $1/2<\|P_n\|_v<2$. It is also immediate that $\|P_{n+1}-P_n\circ T_{\rho_n,x_n}\|_\infty=0<\mu(\rho_n)$. Finally, from the definition of the sequence $(x_n)_{n \in \N}$ we check the last condition of Lemma \ref{estimate}, namely,
$$(1-\|x_n\|^2)\|P_n(x_n)\|>\|P_n\|_v-\mu(\rho_n).$$
Therefore, by Lemma \ref{estimate}, we have that
\begin{equation}\label{neigh}\sup\limits_{y\in B_X\setminus[x_n]_{\rho_n}}(1-\|y\|^2)\|P_{n+1}(y)\|<\|P_{n+1}\|_v-\mu(\rho_n).\end{equation}
Since $\mu$ is increasing, we get that
$$(1-\|x_{n+1}\|^2)\|P_{n+1}(x_{n+1})\|>\|P_{n+1}\|_v-\mu(\rho_{n+1})\ge\|P_{n+1}\|_v-\mu(\rho_n).$$
Hence, by \eqref{neigh} $x_{n+1}\in B_X\cap[x_n]_{\rho_n}$, that is,
$$d(x_{n+1},\text{span}_\C(x_n))\leq\rho_n.$$
Thus, by Lemma \ref{convlemma}, there is a subsequence $(x_{\sigma(n)})_{n \in \N}$ and a point $y\in \overline{B_X}$ such that $\|x_{\sigma(n)}-y\|\to0$. Notice that $Q$ attains its weighted norm in $y$ since
\begin{eqnarray*} 
(1-\|y\|^2)|Q(y)| &=& \lim\limits_{n\to\infty}(1-\|x_{\sigma(n)}\|^2)\|P_{\sigma(n)}(x_{\sigma(n)})\| \\
&\ge& \lim\limits_{n\to\infty}\|P_{\sigma(n)}\|_v-\mu(\rho_{\sigma(n)})\\
&=&\|Q\|_v.
\end{eqnarray*} 
It only remains to show that $y\in [x]_\ep$. Indeed, using the same argument as in \eqref{ineqbas}, and taking into account \eqref{ineqsums} we deduce that
$$\|Q-P\circ T_{\rho_1,x_1}\|_\infty\leq \mu(\rho_1).$$
Therefore, by using Lemma \ref{estimate} with $R=P$, $S=Q$, $x=x_1$ and $\rho=\rho_1$ we get that
$$\sup\limits_{z\in B_X\setminus [x_1]_{\rho_1}}(1-\|z\|^2)\|Q(z)\|<\|Q\|_v-\mu(\rho_1).$$
Thus, since $Q$ attains the norm $\|Q\|_v$ in $y$, we finally conclude that $y\in[x_1]_{\rho_1}\subseteq [x]_\ep$ and we are done with the proof.
\end{proof}

\subsection{More examples} We conclude the paper by providing  examples of density of certain holomorphic functions which attain their $s$-norms. For a Banach space $X$, we denote as $\mathcal{A}_{wu}(B_X)$ the subspace of $\mathcal{A}_u(B_X;\C)$ consisting of holomorphic functions which are uniformly weakly continuous in the closed unit ball.

The following result arises as a direct consequence of the proof of \cite[Theorem 3.3]{AAGM}.

\begin{theorem}\label{finiteth}
Let $X$ be a Banach space satisfying the following property: for every finite-dimensional space $F$, $\ep>0$, and bounded linear operator $T:X\to F$, there is a norm one projection $P:X\to X$ with finite-dimensional range such that $\|T-TP\|\leq\ep$. Then, the subset of holomorphic functions in $\mathcal{A}_{wu}(B_X)$ attaining their norm $\|\cdot\|_s$ for every $s\in (0,1]$ is $\|\cdot\|_{\infty}$ dense in $\mathcal{A}_{wu}(B_X)$.
\end{theorem}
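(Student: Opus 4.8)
The plan is to reuse the approximation mechanism behind the proof of \cite[Theorem 3.3]{AAGM} and to add the observation that the approximant it produces attains not only the supremum norm but in fact the $s$-norm for \emph{every} $s\in(0,1]$ simultaneously. Concretely, given $f\in\mathcal{A}_{wu}(B_X)$ and $\ep>0$, I would approximate $f$ uniformly by a composition $f\circ P$, where $P$ is a norm one projection of $X$ onto a finite-dimensional subspace, and then use compactness of finite-dimensional balls to obtain attainment over all spheres at once.

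First I would exploit that $f$ is uniformly weakly continuous on $\overline{B_X}$: there exist $x_1^*,\dots,x_n^*\in X^*$ and $\delta>0$ such that $|f(u)-f(w)|<\ep$ whenever $u,w\in\overline{B_X}$ satisfy $\max_i|x_i^*(u-w)|\le\delta$. Packaging these functionals into the finite-rank operator $T\colon X\to\C^n$ (with the sup-norm on $\C^n$) given by $T(z)=(x_1^*(z),\dots,x_n^*(z))$, I would apply the standing hypothesis on $X$ to obtain a norm one projection $P\colon X\to X$ with finite-dimensional range $E:=P(X)$ and $\|T-TP\|\le\delta$. Then for each $z\in\overline{B_X}$ we have $Pz\in\overline{B_X}$ and $\max_i|x_i^*(z-Pz)|=\|(T-TP)(z)\|\le\delta$, so that $|f(z)-f(Pz)|<\ep$; that is, $\|f-f\circ P\|_\infty\le\ep$. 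One also has to verify that $g:=f\circ P$ remains in $\mathcal{A}_{wu}(B_X)$, which holds because $P$ factors through finitely many functionals into the finite-dimensional $E$ and is therefore (uniformly) weak-to-norm continuous on $\overline{B_X}$, while $f$ is uniformly continuous on the compact set $\overline{B_E}$.

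The genuinely new step is that $g=f\circ P$ attains its $s$-norm for all $s$ at once. Since $\|P\|=1$ forces $\|Pz\|\le\|z\|$, and since $P$ restricts to the identity on its range $E$, one checks that $P(s\overline{B_X})=s\overline{B_E}$ for every $s\in(0,1]$, whence $\|g\|_s=\sup_{w\in s\overline{B_E}}|f(w)|$. As $E$ is finite-dimensional, $s\overline{B_E}$ is compact and this supremum is attained at some $w_0\in s\overline{B_E}\subseteq s\overline{B_X}$; because $w_0\in E$ we get $g(w_0)=f(w_0)$, so $|g(w_0)|=\|g\|_s$ and $g$ attains its $s$-norm. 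As $s\in(0,1]$ and $\ep>0$ were arbitrary, the set of such $g$ is $\|\cdot\|_\infty$-dense in $\mathcal{A}_{wu}(B_X)$.

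I expect the only delicate point to be the first reduction — extracting the finitely many controlling functionals from uniform weak continuity and matching the resulting $T$ with the projection estimate $\|T-TP\|\le\delta$ — which is precisely the content imported from \cite{AAGM}. Once the norm one finite-rank projection is in hand, attainment over all spheres is a soft compactness argument, so the hypothesis on $X$ (existence of good norm one finite-rank projections) is exactly what the argument needs.
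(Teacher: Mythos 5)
Your argument is correct and is essentially the one the paper intends: the paper gives no written proof, deferring to the proof of \cite[Theorem 3.3]{AAGM}, which is precisely your construction of $g=f\circ P$ via uniform weak continuity and the hypothesis on finite-rank norm-one projections. Your added observation that $P(s\overline{B_X})=s\overline{B_E}$ for \emph{every} $s\in(0,1]$, so that compactness yields attainment of all $s$-norms simultaneously, is exactly the ``direct consequence'' the authors have in mind.
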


The next corollary also follows from \cite[Corollary 3.4]{AAGM}.

\begin{corollary}\label{corolfinite}
Let $X$ be a Banach space satisfying at least one of the following conditions:
\begin{itemize}
    \item It has a shrinking monotone finite-dimensional decomposition.
    \item $X=C(K)$ for a compact Hausdorff topological space $K$.
    \item $X=L_p(\mu)$ where $\mu$ is a finite measure and $p\in[0,\infty]$.
\end{itemize}
Then, the subset of holomorphic functions in $\mathcal{A}_{wu}(B_X)$ attaining the norm $\|\cdot\|_s$ for every $s\in (0,1]$ is $\|\cdot\|_{\infty}$ dense in $\mathcal{A}_{wu}(B_X)$. In particular, if $K$ is a scattered compact topological space, then the subset of holomorphic functions in $\mathcal{A}_{u}(B_{C(K)};\C)$ attaining the norm $\|\cdot\|_s$ for every $s\in (0,1]$ is $\|\cdot\|_{\infty}$ dense in the  $\mathcal{A}_{u}(B_{C(K)};\C)$.
\end{corollary}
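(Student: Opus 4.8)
The plan is to derive the whole statement from Theorem \ref{finiteth}: its hypothesis---call it property $(\star)$---requires that for every finite-dimensional $F$, every $\ep>0$, and every bounded operator $T\colon X\to F$, there be a norm one projection $P\colon X\to X$ with finite-dimensional range satisfying $\|T-TP\|\le\ep$. Hence it suffices to verify that each of the three listed conditions forces $(\star)$; the asserted $\|\cdot\|_\infty$-density in $\mathcal{A}_{wu}(B_X)$ then follows at once by Theorem \ref{finiteth}.

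It is convenient to first reformulate $(\star)$ at the level of functionals. Fixing a basis $f_1,\dots,f_d$ of $F$, write $Tx=\sum_{i=1}^d x_i^*(x)\,f_i$ with $x_i^*\in X^*$. Then $T-TP=\sum_{i=1}^d(x_i^*-P^*x_i^*)\otimes f_i$, so that
\begin{equation*}
\|T-TP\|\le\sum_{i=1}^d\|x_i^*-P^*x_i^*\|\,\|f_i\|.
\end{equation*}
Thus $(\star)$ holds provided that every finite family $x_1^*,\dots,x_d^*\in X^*$ can be approximated, uniformly and to within $\ep$, by $P^*x_1^*,\dots,P^*x_d^*$ for a single norm one finite-rank projection $P$.

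With this reduction, the first two classes are straightforward. If $X$ carries a shrinking monotone finite-dimensional decomposition, let $\pi_m$ be the canonical partial-sum projections: monotonicity gives $\|\pi_m\|=1$ and finite-dimensional range, and the shrinking property is precisely the statement that $\|x^*-\pi_m^*x^*\|\to0$ for every $x^*\in X^*$; choosing $m$ large enough for $x_1^*,\dots,x_d^*$ yields $(\star)$. For $X=L_p(\mu)$ with $1\le p<\infty$ (the case $p=\infty$ being analogous), the conditional expectations $\mathbb{E}[\,\cdot\,\vert\,\mathcal F]$ onto finite sub-$\sigma$-algebras $\mathcal F$ are norm one projections with finite-dimensional range, are self-adjoint for the $L_p$--$L_{p'}$ duality, and reproduce any fixed $g\in L_{p'}$ up to $\ep$ in norm once $\mathcal F$ is chosen fine enough; applying this to the functionals $x_i^*$ again gives $(\star)$. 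The genuinely delicate case is $X=C(K)$, where no decomposition is available: here $(\star)$ is the classical fact, obtained in the course of \cite[Corollary 3.4]{AAGM}, that finitely many Radon measures on $K$ can be reproduced up to $\ep$ in total variation by the adjoint of a norm one finite-rank projection associated to a sufficiently fine finite partition of $K$. This is the main obstacle, precisely because it cannot be organized through one fixed sequence of projections converging to the identity, and I would invoke \cite{AAGM} for it rather than reprove it.

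Finally, for the scattered case, I would use that when $K$ is scattered the space $C(K)$ contains no copy of $\ell_1$ and every continuous polynomial on $C(K)$ is weakly uniformly continuous on bounded sets, whence $\mathcal{A}_u(B_{C(K)};\C)=\mathcal{A}_{wu}(B_{C(K)})$ with the same sup-norm. Since $C(K)$ satisfies $(\star)$, the density of the $s$-norm-attaining subset of $\mathcal{A}_{wu}(B_{C(K)})$ already established is exactly the density of that subset in $\mathcal{A}_u(B_{C(K)};\C)$, which is the ``in particular'' assertion.
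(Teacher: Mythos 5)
Your proposal is correct and takes essentially the same route as the paper, which disposes of this corollary by citing \cite[Corollary 3.4]{AAGM}: one verifies the projection hypothesis of Theorem \ref{finiteth} for each listed class (shrinking monotone FDD via the partial-sum projections, $L_p$ via conditional expectations, $C(K)$ via the averaging projections built from fine partitions in \cite{AAGM}) and, for the scattered case, uses that $\mathcal{A}_u(B_{C(K)};\C)=\mathcal{A}_{wu}(B_{C(K)})$. The one spot to tighten is $p=\infty$, where the $L_p$--$L_{p'}$ duality argument is not available since $L_\infty(\mu)^*\neq L_1(\mu)$; that case should instead be absorbed into the $C(K)$ case via the isometry $L_\infty(\mu)\cong C(K)$.
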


\noindent 
\textbf{Acknowledgements}: The authors would like to thank Mingu Jung, who was always available, for several fruitful conversations on the topic of this manuscript. They are also thankful to María José Beltrán Meneu, José Bonet, Mario P. Maletzki, and Alejandro Miralles for replying some enquiries during the procedure of writing the paper. 

\noindent 
\textbf{Funding information}: S.~Dantas was supported by the Spanish AEI Project PID2019 - 106529GB - I00 / AEI / 10.13039/501100011033 and the funding received from the Universitat Jaume I through its Research Stay Grants (E-2022-04). R. Medina was supported by CAAS CZ.02.1.01/0.0/0.0/16-019/0000778, project
SGS21/056/OHK3/1T/13, MICINN (Spain) Project PGC2018-093794-B-I00, and MIU
(Spain) FPU19/04085 Grant.

\end{document}